\newtheorem{theorem}{Theorem}[section]
\newtheorem{proposition}[theorem]{Proposition}
\newtheorem{lemma}[theorem]{Lemma}
\newtheorem{corollary}[theorem]{Corollary}
\newtheorem{claim}[]{Claim}
\theoremstyle{definition}
\theoremstyle{remark}
\newtheorem{remark}[theorem]{Remark}
\def\R{\mathbb R}
\def\n{\mathbf n}
\def\area{\mathrm{Area}}
\def\dist{\mathrm{dist}}
\newcommand{\mc}{\mathcal}
\newcommand{\mb}{\mathbb}
\newcommand{\wti}{\widetilde}
\numberwithin{equation}{section}
\title{Compactness of self-shrinkers in $\R^3$ with fixed genus}
\date{\today}
\author{Ao Sun}
\address{Department of Mathematics, Massachusetts Institute of Technology, 77 Massachusetts Avenue,
Cambridge, MA 02139, USA}
\email{aosun@mit.edu}
\author{Zhichao Wang}
\address{Max-Planck Institute for Mathematics, Vivatsgasse 7, 
53111 Bonn, Germany}
\email{wangzhichaonk@gmail.com}
\begin{document}
\begin{abstract}
We prove the compactness of self-shrinkers in $\mathbb R^3$ with bounded entropy and fixed genus. As a corollary, we show that numbers of ends of such surfaces are uniformly bounded by the entropy and genus.
\end{abstract}

\maketitle
\section{Introduction}
A hypersurface $\Sigma\subset\R^{n+1}$ is called a \emph{self-shrinker} if it satisfies the equation
\begin{equation}
H=\frac{1}{2}\langle x,\mathbf n\rangle,
\end{equation}
where $H$ is the mean curvature of the hypersurface, $x$ is the position vector on the hypersurface, and $\mathbf{n}$ is the unit normal vector of $\Sigma$ at $x$.

Self-shrinkers are the homothetic solitons of mean curvature flow. That is, suppose $\Sigma$ is a self-shrinker, then $\{\sqrt{-t}\Sigma\}_{t\in(-\infty,0)}$ is a solution to the mean curvature flow. Moreover, by using Huisken's monotonicity formula in \cite{Hui90}, Ilmanen \cite{Ilm95} and White \cite{Whi94} proved that self-shrinkers are models for singularities of mean curvature flows. 

The simplest self-shrinkers in $\mathbb R^3$ are the planes, the spheres, and the cylinders. Brendle \cite{Bre16} proved that these are the only possible self-shrinkers with genus $0$. Angenent \cite{Ang92} constructed a genus $1$ self-shrinker which is known to be the Angenent donut. For higher genus, Kapouleas-Kleene-M{\o}ller \cite{KKM18} and Nyugen \cite{Nguyen} constructed a number of examples of non-compact self-shrinkers by a gluing method.

In this paper, we prove the compactness of self-shrinkers in $\mathbb R^3$ with fixed genus. 
\begin{theorem}\label{main thm}
Let $\{\Sigma_i\}\subseteq\R^3$ be a sequence of embedded self-shrinkers with genus $g$ and uniformly bounded entropy. Then there exist a subsequence (still denoted by $\{\Sigma_i\}$) and a self-shrinker $\Sigma$ with genus $g$ such that $\Sigma_i$ converges to $\Sigma$ in $C_{loc}^\infty(\mathbb{R}^3)$.
\end{theorem}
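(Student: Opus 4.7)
The plan is to combine the Colding-Minicozzi smooth compactness theorem for self-shrinkers of bounded genus with the entropy bound to extract a smooth subsequential limit on all of $\R^3$, and then to show that the genus is preserved in the limit by ruling out topology escaping to infinity.

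\smallskip

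For the first step, the entropy bound $\lambda(\Sigma_i) \leq \Lambda$ directly yields uniform Euclidean area bounds $\area(\Sigma_i \cap B_r(x)) \leq C\Lambda\, r^2$ for all $x \in \R^3$ and $r>0$. Combined with the uniform genus bound $g$, the Colding-Minicozzi smooth compactness theorem for self-shrinkers in $\R^3$ applies on each ball $B_R(0)$ and yields a subsequence converging in $C^\infty_{loc}(B_R)$ to a smooth embedded self-shrinker. A diagonal extraction then produces a subsequence $\Sigma_i \to \Sigma$ in $C^\infty_{loc}(\R^3)$, where $\Sigma$ is a smooth complete embedded self-shrinker with $\lambda(\Sigma) \leq \Lambda$ and genus at most $g$ by lower semicontinuity.

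\smallskip

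The main content is to show the genus of $\Sigma$ is exactly $g$. Since smooth convergence on compact sets preserves topology, a drop in genus must come from handles of $\Sigma_i$ escaping to infinity. I would argue by contradiction: if the limit has genus $g' < g$, then there exist points $p_i \in \Sigma_i$ with $|p_i| \to \infty$ and radii $r_i > 0$ such that $\Sigma_i \cap B_{r_i}(p_i)$ contains a handle not captured in $\Sigma$. Choose a scale $\mu_i > 0$ so that the rescaled surfaces $\widetilde{\Sigma}_i := \mu_i(\Sigma_i - p_i)$ have bounded second fundamental form at the origin and retain non-trivial topology in a ball of bounded radius. Depending on how $\mu_i$ compares to $|p_i|^{-1}$, the self-shrinker equation degenerates in the limit to that of a minimal surface, a translator, or another self-shrinker in a translated frame, and in each case the limit inherits entropy at most $\Lambda$ and carries the non-trivial topology that was supposed to have escaped.

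\smallskip

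The hard part is classifying these rescaled limits. The entropy bound combined with the genus bound and the bounded second fundamental form should force finite total curvature of the limit, making it amenable to classification. One then argues, using rigidity results for self-shrinkers and translators of bounded entropy together with classification of complete embedded minimal surfaces in $\R^3$ with bounded density, that any such limit must be a plane, contradicting the non-trivial topology. The principal technical difficulties are (a) selecting the scale $\mu_i$ so that non-trivial topology survives in the limit while the geometry stays bounded, (b) handling the intermediate regime where $\mu_i |p_i|$ stays bounded but nonzero, producing a translated self-shrinker that must have already been visible in $\Sigma$, and (c) propagating the entropy bound and embeddedness through the rescaling.
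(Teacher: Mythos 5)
Your first paragraph is correct and matches the paper: the Colding--Minicozzi smooth compactness theorem, fed by the uniform entropy and genus bounds, gives a subsequence converging in $C^\infty_{loc}(\R^3)$ to a complete embedded self-shrinker $\Sigma$ with genus $\leq g$, and the problem reduces to showing no genus is lost at infinity. The paper also argues by ruling out escaping topology, and it too uses blow-ups at far-away points to produce translators (its Theorem \ref{Theorem: blow up shrinker to get translator} produces a Bowl soliton). So the coarse strategy overlaps. But from there your plan has two genuine gaps that the paper's argument is specifically engineered to avoid.

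First, the crucial missing ingredient is L.~Wang's classification of self-shrinker ends with finite genus (\cite{Wang16}, Theorem \ref{thm:tangent flow multiplicity 1} in the paper). This is what lets one decompose $\Sigma\setminus B_R(0)$ into finitely many annular ends, each asymptotic to a cone or a round cylinder with multiplicity one, and then prove that $\Sigma_i$ outside $B_R(0)$ is trapped in conical neighborhoods of these ends (Lemma \ref{lem:end curve hausdorff converges}). Without this, your ``handle escaping to infinity'' has no structure, and you cannot localize where to look for it. The paper never needs to ``select a scale $\mu_i$ capturing the topology'' because it argues globally inside each conical neighborhood: on a conical end it shows $|x|^2$ has no critical points (Morse theory then gives an annulus), and on a cylindrical end it runs a Brakke-flow/translation analysis showing the surface is diffeomorphic to $\Gamma$ or a disk. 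This sidesteps your acknowledged difficulty (a): there is no single $p_i$ and $\mu_i$ that is guaranteed to trap a handle.

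Second, your classification step as stated cannot close. You propose to reach a contradiction from ``entropy at most $\Lambda$'' for the rescaled limits, but that is far too weak: $\Lambda$ is arbitrary, and Bowl solitons, cylinders, and nonplanar low-entropy limits all have entropy well below typical $\Lambda$. The paper's contradiction is much sharper and hinges on the refined F-functional estimate (Proposition \ref{prop:entropy upper bound}): along a conical end the Gaussian density along the asymptotic direction drops to $1$, so any blow-up limit there has entropy $\leq 1$ and must be a plane, contradicting a Bowl soliton appearing at a critical point; along a cylindrical end the corresponding bound is $\lambda_1$, exactly the entropy of the cylinder, forcing the translated limit to be the cylinder with multiplicity one. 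This precise bound comes from the monotonicity formula combined with the asymptotic structure of $\Sigma$'s end --- information you do not have in your scheme because you have not invoked the end classification. Relatedly, your claim that entropy, genus, and bounded $|A|$ force finite total curvature is not justified for translators and is not the mechanism used. Finally, your intermediate regime ``$\mu_i|p_i|$ bounded but nonzero producing a translated self-shrinker that must have already been visible in $\Sigma$'' is not correct as stated; with $|p_i|\to\infty$ and $\mu_i|p_i|$ bounded you get $\mu_i\to 0$, and the limit is a blow-down, not a translate of a shrinker. The paper handles these regimes case-by-case (Propositions \ref{prop:cylinder end}, \ref{prop:cone}, \ref{prop:compact}) with explicit scalings tied to the cylinder or cone geometry of $\Sigma$'s end, not a generic dichotomy of $\mu_i$ versus $|p_i|^{-1}$.
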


Here entropy is defined to be the Gaussian area of self-shrinkers (see (\ref{def:entropy}) for the precise definition).  

Given $\Lambda>0$ and a non-negative integer $g$, we define $\mathcal M_\mathcal S(\Lambda,g)$ to be the space of self-shrinkers in $\mathbb R^3$ with genus $g$ and entropy no more than $\Lambda$. For the sake of brevity, set 
\begin{equation}\label{def of M space}
\mathcal M_\mathcal S(g)=\bigcup_{\Lambda>0}\mathcal M_\mathcal S(\Lambda,g).
\end{equation}
Then our main theorem is equivalent to say that $\mathcal M_\mathcal S(\Lambda,g)$ is compact under $C_{loc}^\infty(\mathbb{R}^3)$ topology.

The smooth compactness theorem of $\bigcup_{k=0}^g\mathcal M_\mathcal S(\Lambda,k)$ was first developed by Colding-Minicozzi in \cite{CM12_2}, see also some further generalizations in \citelist{\cite{CZ13}\cite{CMZ15}}. However, it is not known that whether the topological type of the self-shrinkers would change in the limit. For example, when passing to limit, a ``hole" on a sequence of self-shrinkers may move further and further, and vanishes at infinity when passing to limit. Our theorem rules out this possibility, and improves the understanding of the structure of self-shrinkers with finite genus. 

As a result, a direct corollary of our main theorem is

\begin{corollary}\label{cor:uniform decomposition}
Given $\Lambda>0$, $g>0$, there exists $R=R(\Lambda,g)>0$ such that for any $\Sigma\in \mathcal M_\mathcal S(\Lambda, g)$, $\Sigma\setminus B_R(0)$ consists of finite union of topological annuli and topological disks.
\end{corollary}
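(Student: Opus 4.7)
The plan is to argue by contradiction, combining Theorem \ref{main thm} with the asymptotic structure of an individual self-shrinker. If the conclusion fails, for each $k \in \mathbb{N}$ one can pick $\Sigma_k \in \mathcal{M}_\mathcal{S}(\Lambda,g)$ so that $\Sigma_k \setminus B_k(0)$ is \emph{not} a finite union of topological disks and annuli. By Theorem \ref{main thm}, after extracting a subsequence, $\Sigma_k \to \Sigma$ in $C_{loc}^\infty(\mathbb{R}^3)$ for some $\Sigma \in \mathcal{M}_\mathcal{S}(\Lambda,g)$ with genus exactly $g$.

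For the fixed limit $\Sigma$, I would first establish that a finite $R_\star > 0$ works: $\Sigma \cap B_{R_\star}(0)$ contains all $g$ handles (finite genus forces them into a compact set), so $\Sigma \setminus B_{R_\star}(0)$ has genus $0$, and moreover $\Sigma \setminus B_{R_\star}(0)$ decomposes into finitely many components each topologically a disk or annulus. The finiteness of ends here comes from Huisken's monotonicity and the entropy bound, which forces a lower-bounded contribution per end to the Gaussian area; the topological type of each end comes from the standard asymptotic cone/cylinder analysis for self-shrinkers at infinity.

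Next, fix a regular value $\rho \in (R_\star, R_\star + 1)$ for the distance function on $\Sigma$. By $C_{loc}^\infty$ convergence, for $k$ large $\Sigma_k \cap \overline{B_\rho(0)}$ is diffeomorphic to $\Sigma \cap \overline{B_\rho(0)}$; hence its genus is $g$, matching the total genus of $\Sigma_k$, so $\Sigma_k \setminus B_\rho(0)$ has genus $0$, and the boundary circles $\Sigma_k \cap \partial B_\rho(0)$ are in bijection with those of $\Sigma$. If one can further show that each component of $\Sigma_k \setminus B_\rho(0)$ is itself a topological disk or annulus, then for $k > \rho$ the further cut $\Sigma_k \setminus B_k(0)$ is obtained from a finite union of disks and annuli by transverse intersection with $\partial B_k(0)$, which preserves the disk/annulus property on each component and keeps the total number finite --- contradicting the choice of $\Sigma_k$.

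The principal obstacle is the matching claim in the previous paragraph: a priori, two cylindrical ends of $\Sigma$ could be joined by a long neck of $\Sigma_k$ outside $B_\rho(0)$, producing a pair-of-pants component, or brand-new ends could appear there, neither of which is ruled out by $C_{loc}^\infty$ convergence alone. I expect to preclude such behavior by a no-neck-at-infinity argument parallel to the core of the proof of Theorem \ref{main thm}: a blow-up at any such neck would extract a nontrivial limiting self-shrinker inconsistent with the already-identified limit $\Sigma$, or it would carry topological content off to infinity in a way that contradicts the preservation of genus in the main theorem.
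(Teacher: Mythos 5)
Your overall strategy --- argue by contradiction, pass to a limit $\Sigma$ via Theorem \ref{main thm}, match $\Sigma_k$ to $\Sigma$ inside a large ball by $C^\infty_{loc}$ convergence, and then control the ends --- is exactly the paper's strategy. But there is a genuine gap, and you identify it yourself: the ``principal obstacle'' of ruling out necks, pairs-of-pants, or spurious new ends in $\Sigma_k$ outside $B_\rho(0)$. In the paper this is not an obstacle to be circumvented by a sketchy blow-up heuristic --- it is precisely the content of Theorem \ref{thm:shrinker converge}, whose proof occupies Sections \ref{section:cone} and \ref{section:cylinder}. Together with Lemma \ref{lem:end curve hausdorff converges} (which shows $\Sigma_i\setminus B_{R_0}(0)$ is actually contained in the conical neighborhoods $\mathcal C_{U_j,R_0}$ of the ends of $\Sigma$, so no ``brand-new ends'' can appear in between), that theorem says each component of $\Sigma_i\setminus B_{R_0}(0)$ is diffeomorphic to an annulus or a disk. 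The paper's proof of the corollary is a two-line application of this; your proposal, as written, only gestures at ``a no-neck-at-infinity argument parallel to the core of the proof of Theorem \ref{main thm}'' without producing it, so the proof is incomplete.

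A secondary, smaller point: even granting that each component of $\Sigma_k\setminus B_\rho(0)$ is a disk or annulus, your step that ``transverse intersection with $\partial B_k(0)$ preserves the disk/annulus property on each component'' is not automatic --- an annulus can be cut by a sphere into several pieces, and a disk can be cut into a disk plus annuli. What actually controls this in the paper is Morse theory for $|x|^2$ restricted to the end: the proof of Theorem \ref{thm:shrinker converge} (e.g.\ the argument in Theorem \ref{Theorem: Asymptotic to a cone main theorem} and Step 4 of Propositions \ref{prop:cylinder end}--\ref{prop:compact}) shows $|x|^2$ has no critical points on $\Sigma_i\cap\mathcal C_{U,R_0}$ except, in the compact case, the single tip, so that cutting at any larger radius again produces annuli and disks. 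That Morse-theoretic control is part of what you would need to supply if you wanted to make your ``transverse cut'' step rigorous rather than invoking Theorem \ref{thm:shrinker converge} directly.
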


In other words, we prove that the ``holes" on a self-shrinker can not stay far away from the origin of $\mathbb R^3$.

\vspace{1.5em}

The classification of self-shrinkers' ends plays an important role in our paper. Such a result has been firstly studied in \cite{Wang14} by L. Wang, who used a Carleman type technique to prove that two self-shrinkers' ends which are asymptotic to the same cone must identically coincide with each other. Later in \cite{Wang16_2}, L. Wang proved that two self-shrinkers' ends which are asymptotic to the same cylinder with certain decay rate must identically coincide with each other. As a result, the asymptotic behavior of noncompact self-shrinkers at infinity is quite clear. Recently, L. Wang classified the self-shrinkers' ends with finite topology in \cite{Wang16}, which is the most important structure theory of self-shrinkers in $\mathbb{R}^3$.

It may be interesting to compare the structure theory of self-shrinkers with the structure theory of minimal surfaces in $\R^3$, cf. Colding-Minicozzi \citelist{\cite{CM-minimal1}\cite{CM-minimal2}\cite{CM-minimal3}\cite{CM-minimal4}\cite{CM-minimal5}}, Meeks-Rosenberg \cite{MeeksRosenberg05}. The structure of minimal surfaces with finite topology and finite total curvature has been well studied, and the ends of minimal surfaces look like planes or helicoids at infinity. This is very different from the case of self-shrinkers. 

The key step in \cite{Wang16} is to prove the multiplicity of a tangent flow of a self-shrinker equals to $1$. Then the classification theorem follows from Brakke's Regularity Theorem (see \citelist{\cite{Bra78}\cite{Whi02}}). Note that the argument in \cite{Wang16} works for all $\Sigma\in\mathcal M_\mathcal S(\Lambda,g)$. Consequently, $\Sigma$ can only have finitely many ends (see Lemma \ref{lem:finite ends}). Our theorem in this paper implies that the number of ends are bounded by a constant depending only on $\Lambda$ and $g$.

\begin{corollary}\label{cor:uniform bound ends}
There exists a constant $C=C(\Lambda ,g)$ such that for all $\Sigma\in\mathcal M_\mathcal S(\Lambda, g)$, the number of ends of $\Sigma$ is no more than $C$.
\end{corollary}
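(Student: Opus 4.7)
The plan is to argue by contradiction using Theorem \ref{main thm}. Suppose the conclusion fails; then there is a sequence $\{\Sigma_i\} \subset \mathcal{M}_\mathcal{S}(\Lambda, g)$ whose number of ends $N_i$ tends to infinity. By Theorem \ref{main thm}, after passing to a subsequence, $\Sigma_i$ converges in $C^\infty_{\mathrm{loc}}(\mathbb{R}^3)$ to some $\Sigma_\infty \in \mathcal{M}_\mathcal{S}(\Lambda, g)$, and by Lemma \ref{lem:finite ends} this $\Sigma_\infty$ has only finitely many ends. The aim is to leverage the smooth convergence on a fixed ball to bound $N_i$ in terms of a quantity intrinsic to $\Sigma_\infty$.

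First, I would fix $R = R(\Lambda, g)$ from Corollary \ref{cor:uniform decomposition} and then choose $R' \in [R, R+1]$ that is a regular value of the function $|x|$ restricted to $\Sigma_\infty$, which is possible by Sard's theorem. Then $\Sigma_\infty \cap \partial B_{R'}(0)$ is a compact smooth $1$-manifold, hence a finite disjoint union of, say, $M$ embedded circles. By Corollary \ref{cor:uniform decomposition} applied to each $\Sigma_i$, the ends of $\Sigma_i$ are in bijection with the annular components of $\Sigma_i \setminus B_R(0)$. Each such annular component is non-compact with boundary on $\partial B_R(0)$, so by connectedness it must meet $\partial B_{R'}(0)$; and since distinct components of $\Sigma_i \setminus B_R(0)$ are pairwise disjoint, they contribute non-empty pairwise disjoint subsets of $\Sigma_i \cap \partial B_{R'}(0)$. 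Hence $N_i$ is at most the number of connected components of $\Sigma_i \cap \partial B_{R'}(0)$.

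The final step is to pass to the limit. Smooth convergence on $\overline{B}_{R+2}(0)$ together with transversality of $\Sigma_\infty$ to $\partial B_{R'}(0)$ forces $\Sigma_i$ to be transverse to $\partial B_{R'}(0)$ for $i$ large, and ensures that $\Sigma_i \cap \partial B_{R'}(0)$ has exactly $M$ connected components for such $i$. Combining with the previous bound gives $N_i \leq M$ for all sufficiently large $i$, contradicting $N_i \to \infty$. I expect the only subtle point to be this final passage to the limit: the count of boundary circles must be stable under the convergence, which is precisely why choosing $R'$ as a regular value of $|x|$ on $\Sigma_\infty$ is essential, since transverse intersections persist under $C^1$-small perturbations and the resulting $1$-manifolds are then diffeomorphic.
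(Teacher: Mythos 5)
Your proposal is correct, and the overall strategy (argue by contradiction, extract a $C^\infty_{\mathrm{loc}}$ limit $\Sigma_\infty$ via Theorem \ref{main thm}, then cap the number of ends of $\Sigma_i$ by a quantity attached to $\Sigma_\infty$) matches the paper's. The difference is in the bookkeeping: the paper invokes Theorem \ref{thm:shrinker converge} directly to conclude that, for $i$ large, every end of $\Sigma_i$ lies in one of the conical neighborhoods $\mathcal C_{U_j,R}$ of the finitely many ends of $\Sigma_\infty$ and is diffeomorphic to the corresponding end or a disk, giving $E(\Sigma_i)\le E(\Sigma_\infty)$ in one stroke, with finiteness supplied by Lemma \ref{lem:finite ends}. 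You instead route through Corollary \ref{cor:uniform decomposition} and then count circles of $\Sigma_i\cap \partial B_{R'}(0)$ for a Sard-regular radius $R'$, using transversality stability under $C^1$-small perturbation. This is a slightly longer argument that yields the same bound, and it has a minor cosmetic advantage: the finiteness of the bound $M$ falls out of the compactness of the $1$-manifold $\Sigma_\infty\cap\partial B_{R'}(0)$ rather than being quoted separately. One small point worth tightening in your write-up: Corollary \ref{cor:uniform decomposition} as stated only says each component of $\Sigma_i\setminus B_R(0)$ is an annulus or a disk, and does not literally assert that annular components are exactly the ends. What you actually use is just that every noncompact component has boundary on $\partial B_R(0)$ and hence, by connectedness, must meet the larger sphere $\partial B_{R'}(0)$; this is enough for your count and does not need the "bijection with annular components" phrasing.
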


\vspace{1.5em}
Another application of Theorem \ref{main thm} is the existence of entropy minimizer in $\mathcal M_\mathcal S(g)$. It follows from the definition of entropy that the plane $\mathbb R^2$ minimizes the entropy among all surfaces in $\R^3$. Among all closed self-shrinkers in $\R^3$, the round sphere is the only one achieves the least entropy for all dimensions, which was proven by Colding-Ilmanen-Minicozzi-White in \cite{CIMW}. After that, Bernstein-Wang \cite{BW17} confirmed that the cylinders have the third least entropy among all self-shrinkers in $\mathbb R^3$. As a corollary of our main theorem, we show the existence of an entropy minimizer with fixed genus $g$. 
\begin{corollary}
There exists an entropy minimizer in $\mathcal M_\mathcal S(g)$ whenever $\mathcal M_\mathcal S(g)\neq \emptyset$.
\end{corollary}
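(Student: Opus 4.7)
The plan is a standard direct method argument that simply packages Theorem \ref{main thm} together with the lower semi-continuity of the entropy.

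First, assume $\mathcal{M}_\mathcal{S}(g) \neq \emptyset$ and set $\lambda_g := \inf\{\lambda(\Sigma) : \Sigma \in \mathcal{M}_\mathcal{S}(g)\}$. Since every self-shrinker has entropy at least $1$, we have $\lambda_g \in [1,\infty)$. Choose a minimizing sequence $\{\Sigma_i\} \subset \mathcal{M}_\mathcal{S}(g)$ with $\lambda(\Sigma_i) \to \lambda_g$. In particular $\lambda(\Sigma_i) \leq \lambda_g + 1 =: \Lambda$ for all sufficiently large $i$, so $\Sigma_i \in \mathcal{M}_\mathcal{S}(\Lambda, g)$.

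Next, I apply Theorem \ref{main thm} to extract a subsequence, still denoted $\{\Sigma_i\}$, converging in $C^\infty_{loc}(\mathbb{R}^3)$ to some $\Sigma \in \mathcal{M}_\mathcal{S}(g)$. It then suffices to verify the lower semi-continuity
\begin{equation}
\lambda(\Sigma) \leq \liminf_{i\to\infty} \lambda(\Sigma_i) = \lambda_g,
\end{equation}
as the reverse inequality is automatic from the definition of $\lambda_g$ and $\Sigma \in \mathcal{M}_\mathcal{S}(g)$. For a self-shrinker the entropy is attained at $(x_0, t_0) = (0, 1)$, so $\lambda(\Sigma) = F_{0,1}(\Sigma)$ where $F_{0,1}(\Sigma) = (4\pi)^{-1}\int_\Sigma e^{-|x|^2/4}\, d\mu$. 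For any $R>0$, smooth convergence on the compact set $\overline{B_R(0)}$ gives
\begin{equation}
\int_{\Sigma \cap B_R(0)} \frac{e^{-|x|^2/4}}{4\pi}\, d\mu = \lim_{i\to\infty} \int_{\Sigma_i \cap B_R(0)} \frac{e^{-|x|^2/4}}{4\pi}\, d\mu \leq \liminf_{i\to\infty} \lambda(\Sigma_i).
\end{equation}
Letting $R \to \infty$ and applying the monotone convergence theorem on the left yields $\lambda(\Sigma) = F_{0,1}(\Sigma) \leq \liminf_i \lambda(\Sigma_i) = \lambda_g$.

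Combining the two inequalities, $\lambda(\Sigma) = \lambda_g$, so $\Sigma$ is the desired entropy minimizer in $\mathcal{M}_\mathcal{S}(g)$. The only non-routine ingredient is Theorem \ref{main thm} itself, which guarantees that the genus is preserved in the limit; without it, the limit might lie in $\mathcal{M}_\mathcal{S}(g')$ for some $g' < g$ and we would fail to produce a minimizer of the correct genus. The lower semi-continuity step is elementary once we know the convergence is smooth on compact sets and the entropy is realized at the fixed scale $(0,1)$.
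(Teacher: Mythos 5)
Your argument is correct and is the intended one: the paper states the corollary as an immediate consequence of Theorem \ref{main thm} without writing out a proof, and the direct method plus lower semi-continuity is the standard way to fill in the details. Two small remarks. First, the equality
\[
\int_{\Sigma\cap B_R(0)}\tfrac{e^{-|x|^2/4}}{4\pi}\,d\mu \;=\; \lim_{i\to\infty}\int_{\Sigma_i\cap B_R(0)}\tfrac{e^{-|x|^2/4}}{4\pi}\,d\mu
\]
as stated needs $\Sigma$ to meet $\partial B_R$ transversally (true for a.e.\ $R$, so you could simply restrict to such $R$); but you only use the inequality ``$\leq$'', which holds for every $R$ by lower semi-continuity of Radon measures on open sets, so nothing breaks. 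Second, the paper already contains a cleaner black box for this step: Proposition \ref{prop:F continuous} gives full continuity $F_{x,t}(\Sigma_i)\to F_{x,t}(\Sigma)$ under the uniform entropy bound, so combined with the Colding--Minicozzi fact that a self-shrinker's entropy is attained at $(0,1)$ one gets $\lambda(\Sigma)=F_{0,1}(\Sigma)=\lim_i F_{0,1}(\Sigma_i)=\lim_i\lambda(\Sigma_i)=\lambda_g$ in one line, avoiding the truncation and monotone convergence argument. Your version is self-contained and equally valid; it just re-derives a special case of that proposition. You correctly identify that the genuinely non-trivial input is Theorem \ref{main thm}, which guarantees the limit has genus exactly $g$ rather than some $g'<g$.
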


We remark that, Bernstein-Wang \cite{BW16} proved that the entropy of closed hypersurfaces is minimized by round spheres among all closed surfaces in $\mathbb R^{n+1}$ with $n\leq 6$. Later Zhu generalized this result to all dimensions in \cite{Zhu16}. In $\mathbb R^3$, Ketover-Zhou \cite{KZ} gave an alternative proof using min-max theory.

This kind of minimizing problem is of great interest in the study of geometry. For example, Simon \cite{Sim93} proved that for a fixed genus number $g$ (see also \cite{BK}), there is a genus $g$ Willmore surface in $\mathbb{R}^3$ minimizes Willmore energy among all closed surfaces with genus $g$. In contrast, for a sequence of minimal surfaces or constant mean curvature surfaces, the topology may change when passing to a limit. Therefore in general it is hard to find an extremal surface with fixed genus.

\vspace{1.5em}
\paragraph{\it Outline of the proof} 
By the compactness of $\cup_{k\leq g}\mathcal M_\mathcal S(\Lambda,k)$ proved by Colding-Minicozzi in \cite{CM12_2}, $\Sigma_i$ converges to $\Sigma$ smoothly in arbitrarily large balls $B_R$. This fact implies that $\Sigma_i\cap B_R$ is diffeomorphic to $\Sigma\cap B_R$ when $i$ sufficiently large. Thus we only need to analyze $\Sigma_i$ outside a large ball. By Wang's result \cite{Wang16}*{Theorem 1.1}, $\Sigma\setminus B_R(0)$ can be decomposed to finitely many annuli. Moreover, each connected component is either asymptotic to a regular cone or a round cylinder. We will call them {\em conical} or {\em cylindrical} ends respectively. Our purpose is to prove that the connected components of $\Sigma_i\setminus B_R$ are topological annuli or topological disks. There are two cases.

\vspace{0.7em}
Firstly, consider that $\Gamma$ is a connected component of $\Sigma\setminus B_R(0)$ which is conical. $\Sigma_i\setminus B_R(0)$ can also be decomposed into several connected components. We consider the one which is close to $\Gamma$, and denote it by $\Gamma_i$. By the monotonicity formulas of $F$-functional near the asymptotic cone, we can prove that $\Gamma_i$ completely lies in a tubular neighborhood of $\Gamma$ (see Lemma \ref{lem:end curve hausdorff converges}). If $|x|^2$ has no critical points in such a neighborhood, then by Morse theory we know that $\Gamma_i$ has to be a topological annulus. So we only need to prove that $|x|^2$ has no critical points on $\Gamma_i$.

To prove that $|x|^2$ has no critical points on $\Gamma_i$, we argue it by contradiction. Suppose $p_i\in \Gamma_i$ is a critical point of the function $|x|^2$. Then $\Sigma_i$ locally smoothly converging to $\Sigma$ implies that $|p_i|\rightarrow\infty$. We can blow up these ends at these critical points, and the limit would be a translating soliton which is not a plane. Moreover it is a Bowl soliton (see Theorem \ref{Theorem: blow up shrinker to get translator}). However, the monotonicity formulas show that the entropy of the translator should be bounded by the $F$-functional of $\Sigma$ at $\partial\Gamma$ (see Proposition \ref{prop:entropy upper bound}), which is closed to $1$ since $\Gamma$ is asymptotic to a regular cone. Recall that the entropy of a Bowl soliton equals to $\lambda_1$ (the entropy of $S^1$; see \cite{Gua16}) which is larger than $1$. Thus we get a contradiction. So $\Gamma_i$ is diffeomorphic to an annulus.

\vspace{0.7em}
Next consider that $\Gamma$ is a cylindrical end with direction $y$. The simplest case is that $\Gamma_i$ has an end which is also asymptotic to a cylinder. Denote the direction of a cylindrical end of $\Gamma_i$ by $y_i$, then $y_i\rightarrow y$ (see Lemma \ref{lem:end curve hausdorff converges}). For any sequence $\rho_i\rightarrow\infty$, we claim that $\Sigma_i-\rho_iy_i$ locally smoothly converges to a self-shrinking cylinder. Indeed, by monotonicity formulas, $F$-functional on $\Sigma_i-\rho_iy_i$ is greater or equal than $\lambda_1$ (see Proposition \ref{prop:entropy upper bound}). On the other hand, the limit (denoted by $\nu$) of $\Sigma_i-\rho_i y_i$ has $F$-functional less than or equal to the $F$-functional of $\Sigma-\rho y$, which converges to $\lambda_1$ as $\rho\rightarrow\infty$. Thus $\Sigma_i-\rho_iy_i$ converges to an $F$-stationary varifold with entropy $\lambda_1$. Then the claim follows from Brakke's regularity theorem (see \citelist{\cite{Bra78}\cite{Whi02}} or Theorem \ref{thm:Brakke regularity}) if $\nu$ is supporting on a smooth self-shrinker with multiplicity $1$. 

It suffices to show the smoothness of $\nu$. Though Ilmanen \cite{Ilm95} shows that the singularities of mean curvature flow of surfaces in $\mathbb R^3$ is smooth, we can not follow his argument to prove that  $\nu$ supports on a smooth self-shrinker. The reason is that we need to prove the smoothness of the limits for all $\rho_i\rightarrow\infty$, and then the integral of $H^2$ is hard to control.

To prove $\nu$ is smooth, we show $\nu$ splits off a line and then the claim follows from the entropy bound. Indeed, the limit is also $F$-stationary if we take limits for $\Sigma-(\rho_i+\tau)y_i$ for each $\tau\in\mathbb R$. Thus $\nu-\tau y$ is also $F$-stationary, which implies that $\nu$ splits off a line. The entropy of $\nu$ equals to $\lambda_1$, which implies that $\nu$ is supporting on a self-shrinking cylinder. 

Consequently, $\Sigma_i-\rho_iy_i$ converges to a self-shrinker for any sequence of $\rho_i\rightarrow\infty$. Combining with the fact that $\Sigma_i$ locally smoothly converges to $\Sigma$, we can prove that $\Gamma_i$ is diffeomorphic to $\Gamma$ (see step 4 of Proposition \ref{prop:cylinder end} for details).

When $\Gamma_i$ consists of conical ends or it is compact, the idea is the same but a suitable blow up/down argument is necessary. We refer to Proposition \ref{prop:cone} and \ref{prop:compact} for details.

\vspace{1.5em}
The paper is organized as follows: In Section \ref{section:preliminaries}, we summarize some known results in mean curvature flows and Brakke flows. Then we give an upper bound for entropy after blowing up. Using this, we show that the Bowl soliton is the only non-trivial blowing up limit. In Section \ref{section:main results}, another version (Theorem \ref{thm:shrinker converge}) of Theorem \ref{main thm} will be stated. Using this, the proof of Corollary \ref{cor:uniform decomposition} and \ref{cor:uniform bound ends} follows. The Section \ref{section:cone} is devoted to the first part of Theorem \ref{thm:shrinker converge}, which deals with the case of conical ends. In Section \ref{section:cylinder}, we prove the second part of Theorem \ref{thm:shrinker converge}, which deals with the case of cylindrical ends. In Appendix \ref{section:end classification}, we recall L. Wang's classification result of ends \cite{Wang16}. In Appendix \ref{section:Translating solitons with low entropy}, we recall the classification result of low entropy translating solitons in \cite{Her18}.

\vspace{1em}
\paragraph{\it Acknowledgement}
We want to thank Professor Bill Minicozzi and Professor Lu Wang for the helpful discussion and comments. Z.W. would like to thank Professor Gang Tian and Professor Xin Zhou for constant encouragement.  A.S. wants to thank Kyeongsu Choi for the helpful discussion. We also would like to thank Jonathan Zhu for bringing our attention to the paper of Bernstein-Wang \cite{BW18}.

\section{Preliminaries}\label{section:preliminaries}
In this section, we summarize some known results which are used in this paper. The experts should feel free to skip this section.
\subsection{$F$-functional and monotonicity formulas}\label{subsection:F functional}
Let us recall Huisken's monotonicity formulas. Given $(y,s)\in \mathbb R^{n+1}\times \mathbb{R}$, let $\Phi_{y,s}$ be the backward heat kernel defined on $\mathbb R^{n+1}\times (-\infty,s)$:
\begin{equation} 
\Phi_{y,s}(x,t)=(4\pi(s-t))^{-\frac{n}{2}}e^{-\frac{|x-y|^2}{4(s-t)}}.
\end{equation}

We will omit the subscript if $(y,s)=(0,0)$

For any hypersurface $\Sigma$ in $\mathbb R^{n+1}$ and $(x_0,t_0)\in \mathbb R^{n+1}\times (0,+\infty)$, the {\em $F$-functional} is defined by 
\begin{equation}
F_{x_0,t_0}(\Sigma)=(4\pi t_0)^{-\frac{n}{2}}\int_{\Sigma}e^{-\frac{|x-x_0|^2}{4t_0}} = \int_\Sigma \Phi(x-x_0,-t_0).
\end{equation}

We say $\Sigma$ is a \emph{self-shrinker} in $\mathbb R^{n+1}$ if it satisfies the equation, 
\[ H=\frac{\langle x,\n\rangle }{2}.\]
By monotonicity formulas \cite{CM12_1}*{1.9}, for a self-shrinker $\Sigma$, $V\in\mathbb R^{n+1}$ and $a\in \mathbb R$, $F_{sV,1+as^2}(\Sigma)$ is monotonically decreasing in $s$ whenever $1+as^2>0$.

The {\em entropy} of $\Sigma$ is defined by
\begin{equation}\label{def:entropy}
\lambda(\Sigma):=\sup_{(x_0,t_0)\in\mathbb R^3\times (0,\infty)}F_{x_0,t_0}(\Sigma).
\end{equation}

In this paper, we always denote the entropy of a self-shrinking cylinder in $\mathbb R^3$ by $\lambda_1$.

The entropy is a quantity characterizing all the scales of a hypersurface. This fact is suggested by the following useful proposition (see \citelist{\cite{CM12_1}\cite{CZ13}}).
\begin{proposition}\label{Proposition: entropy and area growth are equivalent}
There is a constant $C$ such that for any embedded hypersurface $\Sigma$ in $\mathbb{R}^{n+1}$,
\begin{equation}
\sup_{x\in\R^{n+1}}\sup_{r>0}\frac{\area (\Sigma\cap B_r(x))}{r^n}\leq C\lambda(\Sigma),
\end{equation}
and
\begin{equation}
\lambda(\Sigma)\leq C\sup_{x\in\R^{n+1}}\sup_{r>0}\frac{\area (\Sigma\cap B_r(x))}{r^n}.
\end{equation}
\end{proposition}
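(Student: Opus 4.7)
The plan is to establish the two inequalities separately, each by a direct computation using the defining integral for $F_{x_0,t_0}$ together with a dyadic decomposition argument in the second case.

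For the first inequality, the point is that the Gaussian kernel $\Phi_{x_0,t_0}$ is bounded below on $B_r(x_0)$ when the scale $t_0$ is tuned to $r$. Given any $x_0\in\R^{n+1}$ and $r>0$, I would set $t_0=r^2$. For $x\in\Sigma\cap B_r(x_0)$, $|x-x_0|^2\leq r^2$, so the integrand satisfies
\[(4\pi r^2)^{-n/2}e^{-|x-x_0|^2/(4r^2)}\geq (4\pi r^2)^{-n/2}e^{-1/4}.\]
Integrating over $\Sigma\cap B_r(x_0)\subset\Sigma$ and using the definition of entropy,
\[\lambda(\Sigma)\geq F_{x_0,r^2}(\Sigma)\geq (4\pi)^{-n/2}e^{-1/4}\,\frac{\area(\Sigma\cap B_r(x_0))}{r^n}.\]
Taking the supremum over $(x_0,r)$ yields the claim with $C=(4\pi)^{n/2}e^{1/4}$.

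For the converse, fix an arbitrary $(x_0,t_0)$ and decompose $\Sigma$ into the central piece $A_{-1}=\Sigma\cap B_{\sqrt{t_0}}(x_0)$ and dyadic annuli $A_k=\Sigma\cap (B_{2^{k+1}\sqrt{t_0}}(x_0)\setminus B_{2^k\sqrt{t_0}}(x_0))$ for $k\geq 0$. Writing $C_0=\sup_{x,r}\area(\Sigma\cap B_r(x))/r^n$, the area hypothesis gives $\area(A_k)\leq C_0(2^{k+1}\sqrt{t_0})^n$. On $A_k$ the Gaussian factor is bounded above by $e^{-4^k/4}$, hence the contribution of $A_k$ to $F_{x_0,t_0}(\Sigma)$ is at most
\[(4\pi t_0)^{-n/2}\cdot e^{-4^k/4}\cdot C_0\,2^{n(k+1)}t_0^{n/2}=(4\pi)^{-n/2}C_0\,2^{n(k+1)}e^{-4^k/4}.\]
A similar trivial estimate handles $A_{-1}$. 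Summing the geometric-times-super-geometric series gives $F_{x_0,t_0}(\Sigma)\leq C\,C_0$ uniformly in $(x_0,t_0)$, and taking the supremum in $(x_0,t_0)$ yields $\lambda(\Sigma)\leq C\,C_0$.

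Neither direction presents a serious obstacle; the only point requiring any care is verifying convergence of the dyadic sum in the second step, which is immediate because $e^{-4^k/4}$ decays faster than any polynomial in $2^k$. The same argument works in all ambient dimensions, which is why the paper states the lemma for arbitrary $\R^{n+1}$ even though it is applied only in $\R^3$.
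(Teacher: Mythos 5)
Your proof is correct, and it is the standard argument: the paper itself does not write out a proof but delegates to \cite{CM12_1} and \cite{CZ13}, where the first inequality is obtained exactly as you do by testing the entropy with the Gaussian at scale $t_0=r^2$, and the second by the dyadic annulus decomposition. Both of your computations are accurate: the lower bound $\Phi\geq (4\pi r^2)^{-n/2}e^{-1/4}$ on $B_r(x_0)$ gives the first inequality with $C=(4\pi)^{n/2}e^{1/4}$, and the series $\sum_{k\geq 0}2^{n(k+1)}e^{-4^k/4}$ converges since the exponential decay dominates the geometric growth, giving the second inequality with a constant depending only on $n$.
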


\subsection{Brakkes flows}
We refer to \citelist{\cite{Wang16}*{\S 2}\cite{Ilm95}} for the definition of Brakke flows. The following monotonicity formula is similar to the monotonicity formula of mean curvature flow:
\begin{lemma}[Monotonicity Formulas for Brakke Flows, \cite{Ilm95}*{Lemma 7}]\label{Brakke monotonicity}
Let $\{\mu_t\}_{t\in (a,b)}$ be a Brakke flow satisfying
\[ \sup_{x\in\mathbb R^{n+1}}\sup_{R>0}\frac{\mu_t(B_R(x))}{R^n}\leq C<+\infty.\]
Then for all $(y,s)\in\R^{n+1}\times(a,b)$, and all $a\leq t_1\leq t_2<s,$
\begin{align*}
	&\int_{t_1}^{t_2}\int\Phi_{y,s}(x,t)\Big|H-\frac{(x-y)^\bot}{2(s-t)}\Big|^2d\mu_t(x)dt\\
	\leq&\int\Phi_{y,s}(x,t_1)d\mu_{t_1}(x)- \int\Phi_{y,s}(x,t_2) d\mu_{t_2}(x).
	\end{align*}
\end{lemma}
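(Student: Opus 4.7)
The plan is to feed the backward heat kernel $\Phi_{y,s}$ as a test function into Brakke's variational inequality and then integrate in time from $t_1$ to $t_2$. Recall that Brakke's inequality for a flow $\{\mu_t\}$ and an admissible nonnegative $C^2$ test function $\phi(x,t)$ reads
\begin{equation*}
\overline D_t \int \phi\, d\mu_t \;\leq\; \int \bigl(\partial_t\phi - \phi|H|^2 + H\cdot\nabla\phi\bigr)\,d\mu_t,
\end{equation*}
where $\nabla$ is the ambient gradient on $\mathbb R^{n+1}$ and $H$ is the mean curvature vector. After substituting $\phi=\Phi_{y,s}$, a direct calculation (exactly as in Huisken's original monotonicity argument) shows that the bracketed integrand equals $-|H-(x-y)^\perp/(2(s-t))|^2\,\Phi_{y,s}$, modulo a tangential Laplacian term that drops out by the first variation formula. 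Integrating in $t\in[t_1,t_2]$ and rearranging then yields the stated inequality.

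The key algebraic step is the completion of the square. Starting from the explicit formula $\nabla \Phi_{y,s} = -\frac{x-y}{2(s-t)}\Phi_{y,s}$, one uses that $H$ is normal so that $H\cdot\nabla\Phi_{y,s} = -\frac{H\cdot (x-y)^\perp}{2(s-t)}\Phi_{y,s}$, combined with the backward heat equation $\partial_t\Phi_{y,s} = -\Delta_{\mathbb R^{n+1}}\Phi_{y,s}$. Splitting the ambient Laplacian as $\Delta_{\mathbb R^{n+1}} = \Delta_\Sigma + (\text{normal Hessian})$ and rearranging produces
\begin{equation*}
\partial_t\Phi_{y,s} - |H|^2\,\Phi_{y,s} + H\cdot\nabla\Phi_{y,s} + \Delta_\Sigma \Phi_{y,s} \;=\; -\Bigl|H - \tfrac{(x-y)^\perp}{2(s-t)}\Bigr|^2 \Phi_{y,s}.
\end{equation*}
The tangential Laplacian $\int \Delta_\Sigma \Phi_{y,s}\,d\mu_t$ is absorbed by the first variation of $\mu_t$ (which in the Brakke-flow sense integrates tangential divergences against $H$), and since $H$ is normal this integrates to zero. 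What remains after assembling these pieces and integrating in $t$ is exactly the claimed inequality.

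The main technical obstacle is that $\Phi_{y,s}$ is not compactly supported and hence not directly admissible as a Brakke test function. I would overcome this by introducing a smooth radial cutoff $\chi_R$ equal to $1$ on $B_R(y)$ and vanishing outside $B_{2R}(y)$, applying Brakke's inequality to the compactly supported test function $\chi_R \Phi_{y,s}$, and controlling the error terms generated by $\nabla \chi_R$ and $\Delta \chi_R$. These errors live on the annular shell $B_{2R}\setminus B_R$, and the hypothesis $\mu_t(B_R(x))\leq C R^n$ together with the Gaussian decay of $\Phi_{y,s}$ makes them integrable and vanishing as $R\to\infty$ by dominated convergence. Passing to the limit recovers the monotonicity formula for the original kernel, completing the proof.
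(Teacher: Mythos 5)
The paper does not prove this lemma; it cites it verbatim to Ilmanen (\cite{Ilm95}*{Lemma 7}), so there is no in-paper proof to compare against. Your overall strategy --- feed the Gaussian test function into Brakke's integral inequality, use Huisken's completion of the square, then localize with a cutoff $\chi_R$ and remove it by dominated convergence using the uniform mass ratio bound --- is indeed the standard route and essentially how Ilmanen proves it.

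However, the paragraph you call ``the key algebraic step'' contains two genuine errors that do not cancel in the way your exposition suggests. First, the kernel $\Phi_{y,s}(x,t)=(4\pi(s-t))^{-n/2}e^{-|x-y|^2/(4(s-t))}$, with the normalizing power $n/2$ rather than $(n+1)/2$, does \emph{not} satisfy the backward heat equation on the ambient space: a direct computation gives
\[
\partial_t\Phi_{y,s}+\Delta_{\mathbb R^{n+1}}\Phi_{y,s}=-\frac{1}{2(s-t)}\Phi_{y,s}\neq 0.
\]
Second, the splitting of the ambient Laplacian along the surface is missing a mean curvature term; the correct identity is
\[
\Delta_{\mathbb R^{n+1}}f\big|_{\Sigma}=\Delta_\Sigma f-\vec H\cdot\nabla f+\mathrm{Hess}\,f(\mathbf n,\mathbf n),
\]
not $\Delta_\Sigma f+\mathrm{Hess}\,f(\mathbf n,\mathbf n)$. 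The displayed pointwise identity you write down (the one producing the perfect square) is in fact correct, and the subsequent observation that $\int\Delta_\Sigma\Phi\,d\mu_t$ vanishes via the first variation applied to the tangential vector field $\nabla^{T}\Phi$ is also fine; but the two-sentence justification you give for the identity is not a valid derivation of it. The honest route is to compute $\partial_t\Phi$ directly, compute $\Delta_\Sigma\Phi$ directly (which brings in the $\vec H\cdot(x-y)^\perp$ term through $\sum_i\nabla_{e_i}e_i$), and observe that the leading $n/(2(s-t))$ terms cancel between the two; alternatively one can use the divergence theorem with the ambient vector field $\nabla\Phi$ rather than its tangential part. You should also flag the (standard, but nontrivial) ingredient that the generalized mean curvature of an integral Brakke flow is $\mu_t$-a.e.\ perpendicular, which is what licenses writing $\vec H\cdot\nabla\Phi=\vec H\cdot\nabla^\perp\Phi$ and invoking the first-variation identity. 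With these repairs the argument is a correct reconstruction of Ilmanen's proof.
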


\begin{remark}
For a Radon measure $\mu$, we can also compute the $F$-functional, which is defined by 
\[F_{x,t}(\mu):=\int (4\pi t)^{-\frac{n}{2}}e^{-\frac{|y-x|^2}{4t}}d\mu(y).\]
Note that if $\mu=\mathcal H^n\lfloor\Sigma$ for a hypersurface $\Sigma$, then $F_{x,t}(\mu)=F_{x,t}(\Sigma)$.
\end{remark}

The following proposition will be used frequently in our argument. This result is well-known and we refer to \cite{Sun18} for a proof.
\begin{proposition}\label{prop:F continuous}
Let $\{\mu_i\}$ be a sequence of Radon measures and 
\[\limsup_{i\rightarrow\infty}\sup_{(x,t)\in\mathbb R^{n+1}\times(0,+\infty)}F_{x,t}(\mu_i)\leq C_0<+\infty.\]
Suppose that $\mu_i\rightarrow\mu$. Then 
$\lim_{i\rightarrow\infty}F_{x,t}(\mu_i)=F_{x,t}(\mu)$.
\end{proposition}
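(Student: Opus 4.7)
The difficulty is that $F_{x,t}(\mu) = \int (4\pi t)^{-n/2} e^{-|y-x|^2/(4t)} \, d\mu(y)$ integrates $\mu$ against a Gaussian which is smooth and bounded but not compactly supported, while weak convergence of Radon measures only yields convergence of integrals of continuous test functions with compact support. The plan is the standard ``bulk plus tail'' decomposition: truncate the Gaussian by a compactly supported cutoff, apply weak convergence on the bulk, and control the tail uniformly in $i$ using the hypothesis that the $F$-functional is uniformly bounded.

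Fix $(x,t) \in \mathbb R^{n+1} \times (0,\infty)$. First, I would pick $\eta_R \in C_c^\infty(\mathbb R^{n+1})$ with $\eta_R \equiv 1$ on $B_R(x)$, $\eta_R \equiv 0$ outside $B_{2R}(x)$, and $0 \leq \eta_R \leq 1$. Since $y \mapsto (4\pi t)^{-n/2} e^{-|y-x|^2/(4t)}\eta_R(y)$ is continuous with compact support, weak convergence $\mu_i \to \mu$ yields
\[ \lim_{i \to \infty} \int (4\pi t)^{-n/2} e^{-|y-x|^2/(4t)}\,\eta_R\, d\mu_i \;=\; \int (4\pi t)^{-n/2} e^{-|y-x|^2/(4t)}\,\eta_R\, d\mu. \]
Next, I would control the tail through the factorization $e^{-|y-x|^2/(4t)} = e^{-|y-x|^2/(8t)} \cdot e^{-|y-x|^2/(8t)}$: on $\{|y-x| \geq R\}$ the first factor is at most $e^{-R^2/(8t)}$, while integrating the second factor against $\mu_i$ reproduces $F_{x,2t}(\mu_i)$ up to a dimensional constant. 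This gives the uniform estimate
\[ \int_{|y-x| \geq R} (4\pi t)^{-n/2} e^{-|y-x|^2/(4t)} \, d\mu_i \;\leq\; 2^{n/2}\, e^{-R^2/(8t)}\,F_{x,2t}(\mu_i)\;\leq\; 2^{n/2} C_0\, e^{-R^2/(8t)}. \]
Sending $R \to \infty$ in the weak-convergence step and using monotone convergence also shows $F_{x,t}(\mu) \leq C_0$, so the same tail bound applies to the limit measure $\mu$.

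Combining the two estimates, for every $R > 0$,
\[ \limsup_{i \to \infty} \bigl| F_{x,t}(\mu_i) - F_{x,t}(\mu) \bigr| \;\leq\; 2 \cdot 2^{n/2} C_0 \, e^{-R^2/(8t)}, \]
and letting $R \to \infty$ gives the proposition. The only substantive step is the uniform tail estimate, and the key point is that the hypothesis supplies a uniform $F$-bound at \emph{all} scales, so the change from $t$ to $2t$ in the factorization trick costs nothing. I do not anticipate any serious obstacle beyond getting this tail estimate right.
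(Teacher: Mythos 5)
Your proof is correct. The paper itself does not give an argument for this proposition but defers to an external reference, so there is no in-text proof to compare against; the decomposition you use -- cut off the Gaussian to invoke weak-* convergence on the bulk, and kill the tail uniformly via the factorization $e^{-|y-x|^2/(4t)} = e^{-|y-x|^2/(8t)}\cdot e^{-|y-x|^2/(8t)}$ together with the uniform bound on $F_{x,2t}(\mu_i)$ -- is exactly the standard argument, and the arithmetic (the dimensional constant $2^{n/2}$ arising from rescaling the kernel from $t$ to $2t$, and the monotone convergence step showing $F_{x,t}(\mu)\le C_0$) is right. The one point worth making explicit in a final write-up is that the hypothesis is a $\limsup$ over $i$, so the bound $F_{x,2t}(\mu_i)\le C_0+\varepsilon$ is only available for $i$ large; since your concluding inequality is itself a $\limsup$ this is harmless, but the phrasing ``$\le 2^{n/2} C_0 e^{-R^2/(8t)}$'' as a per-$i$ estimate should be understood as holding only eventually.
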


We need the compactness theorem for Brakke flows in the following sections.
\begin{lemma}[Compactness of Brakke flows, \cite{Ilm94}*{Lemma 7.1}]\label{lem:brakke compactness}
Let $\{\{\mu^i_t\}_{t\in[a,b)}\}$ be a sequence of Brakke flows so that for all bounded open $U\subset \mathbb R^{n+1}$,
\begin{equation}\label{eq:bounded ratio}
\limsup_{i\rightarrow\infty}\sup_{t}\mu_t^i(U)\leq C(U) <+\infty.
\end{equation}
Then there is a subsequence $\{i_k\}$ and an integral Brakke flow $\{\mu_t\}_{t\in[a,b)}$, so that $\mu_t^{i_k}\rightarrow\mu_t$ in the sense of Radon measures as $k\rightarrow\infty$.
\end{lemma}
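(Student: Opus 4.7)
The plan is to produce a Radon measure limit via weak-$*$ compactness by using Brakke's inequality to control time variation, and then verify that this limit is itself an \emph{integral} Brakke flow via lower semicontinuity of the curvature functional combined with Allard's integer varifold compactness theorem.

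First, I would build the subsequence. For every nonnegative $\phi\in C^1_c(\R^{n+1})$, Brakke's inequality combined with the uniform mass bound (\ref{eq:bounded ratio}) shows that the real-valued function $t\mapsto\mu^i_t(\phi)$ has total variation on $[a,b)$ bounded by a constant depending only on $\phi$, $\nabla\phi$, and $C(\mathrm{supp}\,\phi)$. Helly's selection theorem together with a diagonal argument over a countable dense subfamily of $C^1_c(\R^{n+1})$ then yields a subsequence $\{i_k\}$ and Radon measures $\mu_t$ on $\R^{n+1}$ such that $\mu^{i_k}_t\to\mu_t$ as Radon measures for every $t\in[a,b)$, with the uniform local mass bound inherited by $\mu_t$. (Equivalently one can first extract limits on a countable dense set of times using Banach--Alaoglu and then use monotonicity from Lemma \ref{Brakke monotonicity} to promote convergence to all $t$.)

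Next, I would check that $\{\mu_t\}$ is integral and that Brakke's inequality survives the limit. Integrating Brakke's inequality for $\mu^{i_k}$ against a nonnegative test function over $t$ yields a uniform space-time $L^2$ bound on the generalized mean curvature of the varifolds $\mu^{i_k}_t$; in particular, for almost every $t$ there is a uniform $L^2_{\mathrm{loc}}$ bound. Applying Allard's integer varifold compactness theorem, after a further diagonalization we may assume that $\mu^{i_k}_t\to\mu_t$ as integer rectifiable varifolds for a.e.\ $t$, with $\mu_t$ possessing generalized mean curvature in $L^2_{\mathrm{loc}}$. Lower semicontinuity of $\int\phi|H|^2\,d\mu$ under this varifold convergence, combined with weak continuity of the linear first-variation term $\int\nabla\phi\cdot H\,d\mu^{i_k}_t$, lets Brakke's inequality pass to the limit first in integrated form and then, via an absolute continuity argument, in differential form for the limit flow.

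The main obstacle I expect is this last step: making the lower semicontinuity of the quadratic curvature term compatible with the $\overline D_t$ operator appearing in Brakke's inequality. Since only almost-every-time varifold convergence is available, one cannot pass to the limit at every time naively; instead one must accept the inequality in its integrated or a.e.-differential form and then recover the full Brakke inequality using Fatou together with the bounded-variation properties of $t\mapsto\mu_t(\phi)$ established in the first step. This is standard but technically delicate geometric measure theory, and it is where most of the work in \cite{Ilm94} is concentrated.
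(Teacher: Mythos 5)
The paper does not prove this lemma; it is quoted verbatim from Ilmanen's monograph on elliptic regularization, so the relevant comparison is with Ilmanen's own argument. Your sketch reproduces that argument faithfully: use Brakke's inequality with $\phi=\psi^2$ test functions to show $t\mapsto\mu^i_t(\phi)$ has bounded variation with a uniform (in $i$) bound, extract a subsequence via Helly and a diagonal argument over a countable dense family, obtain a space-time $L^2$ bound on $H$ from the same inequality, invoke Allard's integral compactness for a.e.\ $t$, and pass Brakke's inequality to the limit using lower semicontinuity of $\int\phi|H|^2$.

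One small imprecision worth correcting: after the space-time $L^2$ bound $\int_a^b\int\phi|H|^2\,d\mu^{i_k}_t\,dt\le C$, what Fatou gives you is only that for a.e.\ fixed $t$, $\liminf_k\int\phi|H|^2\,d\mu^{i_k}_t<\infty$, so the $L^2$ bound is attained along a subsequence that may depend on $t$. You cannot genuinely ``diagonalize'' this into a single further subsequence working at a.e.\ $t$. Fortunately you do not need to: the Radon-measure limit $\mu_t$ is already pinned down for every $t$ by the Helly step, so it suffices to apply Allard along the $t$-dependent subsequence and observe that the resulting integer varifold limit must coincide with $\mu_t$. Since integrality is only required for a.e.\ $t$ in the definition of an integral Brakke flow, this gives what is needed. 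With that adjustment your outline is correct and matches Ilmanen's proof.
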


The following theorem is proven by White in \cite{Whi02}, which is a consequence of Brakke's local regularity theorem in \cite{Bra78}. Our statement here is a special case of \cite{Whi02}.
\begin{theorem}[{\cite{Whi02}*{Theorem 7.3}}]\label{thm:Brakke regularity}
Let $\{\Sigma_t^i\}_{t\in(a,b)}$ be a sequence of mean curvature flows in $\mathbb R^{n+1}$ ($n\geq 2$). Suppose that the corresponding Brakke flows $\{\mu_t^i\}$ (where $\mu_t^i:=\mathcal H^n\lfloor\Sigma_t^i$) converge to a Brakke flow $\mu_t$ as Radon measures. If $\mu_t$ is a mean curvature flow measure for $t\in(a,b)$, that is, $\mu_t$ supports on a smooth manifold $\Sigma_t$ with multiplicity $1$, then $\Sigma_t^i$ locally smoothly converge to $\Sigma_t$ for $t\in(a,b)$.
\end{theorem}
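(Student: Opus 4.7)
My plan is to reduce the claim to Brakke's local $\varepsilon$-regularity theorem, which asserts that whenever a Brakke flow has Gaussian density sufficiently close to $1$ at a spacetime point, the flow is smooth in a parabolic neighborhood with uniform curvature estimates. Once uniform smoothness is established along the sequence, I would invoke standard parabolic regularity and Arzel\`a-Ascoli to extract a smoothly convergent subsequence, and then use the identification with the weak limit $\mu_t$ to force the full sequence to converge smoothly.

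To set this up, I would fix $t_0 \in (a,b)$ and $x_0 \in \Sigma_{t_0}$. Since $\mu_{t_0} = \H^n\lfloor \Sigma_{t_0}$ is smooth with multiplicity one near $x_0$, Huisken's monotonicity formula and the standard tangent-flow characterisation of Gaussian density give $\Theta_\mu(x_0,t_0) = 1$. Combining the monotonicity formula in Lemma \ref{Brakke monotonicity} with Proposition \ref{prop:F continuous} applied at all basepoints in a small spacetime neighborhood would show that the Gaussian densities $\Theta_{\mu^i}$ converge to $\Theta_\mu$, and indeed that for any prescribed $\varepsilon>0$ there is a parabolic neighborhood $P$ of $(x_0,t_0)$ on which $\Theta_{\mu^i} \le 1+\varepsilon$ for all sufficiently large $i$.

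Next I would choose $\varepsilon$ below the threshold $\varepsilon_0(n)$ of Brakke's local regularity theorem. That theorem would then yield a possibly shrunken parabolic neighborhood $P'$ of $(x_0,t_0)$ on which each $\Sigma^i_t$ is smooth with a uniform bound on the second fundamental form. Bootstrapping via standard parabolic estimates for the mean curvature flow (Ecker-Huisken-type interior derivative estimates) would produce uniform bounds on all higher covariant derivatives of $A$. Passing to a subsequence, the $\Sigma^i_t$ would converge smoothly on $P'$ to some smooth flow $\widetilde\Sigma_t$; the induced Radon measures $\H^n\lfloor \widetilde\Sigma_t$ must coincide with the weak limit $\mu_t$, forcing $\widetilde\Sigma_t = \Sigma_t$. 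Uniqueness of the subsequential smooth limit (determined by $\mu_t$) then upgrades this to smooth convergence of the full sequence.

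The hard part, and essentially the entire content of the theorem, is Brakke's local $\varepsilon$-regularity theorem itself, whose proof requires a delicate tilt-excess iteration scheme adapted to parabolic problems and is the heart of \cite{Bra78} (with a cleaner treatment in \cite{Whi02}). Granted that result, the remaining steps above constitute a standard compactness-plus-uniqueness argument driven by Gaussian density comparison, and I do not foresee serious additional obstacles.
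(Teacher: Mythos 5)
The paper does not prove this statement at all; it is presented as a direct citation of White's local regularity theorem \cite{Whi02}*{Theorem 7.3} (itself a cleaned-up version of Brakke's \cite{Bra78}), and it is used as a black box throughout the paper. So there is no internal proof for you to match against, and your task here was effectively to reconstruct the argument from scratch.

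Your sketch is the standard and correct derivation of this smooth-convergence statement from Brakke/White $\varepsilon$-regularity, and you correctly isolate the $\varepsilon$-regularity theorem as the real content. Two small points of care are worth flagging. First, the Gaussian densities $\Theta_{\mu^i}$ do not in general converge pointwise to $\Theta_\mu$ under weak-$*$ convergence of Brakke flows (Gaussian density is only upper semicontinuous); what you actually use is that the Gaussian \emph{ratio} at a fixed positive parabolic scale $r_0>0$ is continuous under the convergence (your Proposition \ref{prop:F continuous}), that monotonicity (Lemma \ref{Brakke monotonicity}) bounds $\Theta_{\mu^i}(x,s)$ from above by this ratio, and that for $\mu_{t_0}$ smooth of multiplicity one the ratio at small scale can be made $\le 1+\varepsilon/2$ — this gives the uniform bound $\Theta_{\mu^i}\le 1+\varepsilon$ on a parabolic neighborhood for large $i$, which is what $\varepsilon$-regularity needs. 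Second, once you have uniform $C^\infty$ bounds and subsequential smooth convergence, the identification of the smooth limit with $\Sigma_t$ and the upgrade to convergence of the full sequence is exactly the uniqueness argument you describe. With those clarifications your proposal is a faithful account of how the cited theorem is proved, granted the $\varepsilon$-regularity theorem as input.
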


\subsection{Blow-up and blow-down arguments}
In this paper, $y\in \mathbb R^3$ also represents a constant vector field. Denote the line in $\mathbb R^3$ containing $y$ by $\mathbb R_y$. For $a,b\in\mathbb R$, set 
\begin{equation}\label{eq:def of I}
I_y(a,b)=\{ty|\, t\in(a,b)\}.
\end{equation}
We will use $\R_y$ to denote $I_y(-\infty,\infty)$.

The following proposition will be used frequently in this paper.
\begin{proposition}\label{prop:entropy upper bound}
Let $\{\Sigma_i\}\subseteq \mathcal M_\mathcal S(\Lambda,g)$ (see (\ref{def of M space})) and $\Sigma_i$ locally smoothly converges to $\Sigma$. Suppose $y_i\in S^2(1)$ and $y_i\rightarrow y$. For any $\rho_i\rightarrow+\infty$ and $a_i>0$ satisfying $\rho_ia_i\rightarrow +\infty$,
define
\[\mu=\lim_{i\rightarrow\infty} \mathcal H^2\lfloor a_i(\Sigma_i-\rho_iy_i).\]
Then for any $(x,t)\in \mathbb R^3\times (0,+\infty)$,
\[F_{x,t}(\mu)=\lim_{i\rightarrow\infty}F_{x,t}(a_i(\Sigma_i-\rho_iy_i))\leq \lim_{\rho\rightarrow\infty}F_{\rho y,1}(\Sigma).\]
\end{proposition}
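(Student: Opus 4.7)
The first equality in the statement is immediate from Proposition~\ref{prop:F continuous}, since scaling and translation preserve entropy, so $\lambda(a_i(\Sigma_i-\rho_i y_i))=\lambda(\Sigma_i)\leq\Lambda$. The plan for the inequality is to apply Huisken's monotonicity formula to the ancient mean curvature flow generated by $\Sigma_i$ along a cleverly chosen segment, and then pass to the limit $i\to\infty$ followed by $\rho\to\infty$.

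Concretely, the family $\widetilde\Sigma_i(\tau):=\sqrt{-\tau}\,\Sigma_i-a_i\rho_i y_i$, $\tau<0$, is an ancient mean curvature flow (a spatial translate of the standard shrinking flow of $\Sigma_i$), and at time $\tau^{*}:=-a_i^2$ one has $\widetilde\Sigma_i(\tau^{*})=a_i(\Sigma_i-\rho_i y_i)$. Huisken's monotonicity with base point $(x,\,t-a_i^2)$ (valid since $t>0$ gives $t-a_i^2>\tau^{*}$) then yields, for every $\tau\leq -a_i^2$,
\[
F_{x,t}\bigl(a_i(\Sigma_i-\rho_i y_i)\bigr)
\leq
F_{x,\,t-a_i^2-\tau}\bigl(\widetilde\Sigma_i(\tau)\bigr)
=
F_{\,\frac{x+a_i\rho_i y_i}{\sqrt{-\tau}},\;\frac{t-a_i^2-\tau}{-\tau}\,}(\Sigma_i),
\]
where the last equality is a direct change of variables.

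The crux is choosing $\tau$ well. Fixing an arbitrary $\rho>0$, I would set $\tau_i:=-(a_i\rho_i/\rho)^2$; since $\rho_i\to\infty$, this satisfies $\tau_i\leq -a_i^2$ for all large $i$. Using $y_i\to y$ together with $a_i\rho_i\to\infty$, one checks immediately that $(x+a_i\rho_i y_i)/\sqrt{-\tau_i}\to\rho y$ and $(t-a_i^2-\tau_i)/(-\tau_i)\to 1$. Combining the local smooth convergence $\Sigma_i\to\Sigma$, the uniform entropy bound, and Proposition~\ref{prop:F continuous}, the right-hand side of the displayed inequality converges to $F_{\rho y,1}(\Sigma)$, so $F_{x,t}(\mu)\leq F_{\rho y,1}(\Sigma)$ for each $\rho>0$; letting $\rho\to\infty$ finishes the proof. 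The one clever point is spotting the scaling $-\tau_i=(a_i\rho_i/\rho)^2$: it is the unique choice for which $\sqrt{-\tau_i}$ cancels $a_i\rho_i$ down to the factor $\rho$, while simultaneously forcing $-\tau_i$ to dominate both $a_i^2$ and any fixed positive time. The only remaining bookkeeping---upgrading Proposition~\ref{prop:F continuous} to allow the base point $(x,t)$ to vary with $i$---is a routine triangle-inequality estimate using the uniform entropy bound.
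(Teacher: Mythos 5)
Your proof is correct and is essentially the paper's argument recast in a cleaner packaging: the paper rewrites $F_{x,t}(a_i(\Sigma_i-\rho_iy_i))$ as $F_{sV,1+as^2}(\Sigma_i)$ with $s=\rho_i$ and invokes the self-shrinker monotonicity $s\mapsto F_{sV,1+as^2}(\Sigma_i)$ being nonincreasing, which is precisely the Huisken-monotonicity-along-the-shrinking-flow statement you made explicit via $\widetilde\Sigma_i(\tau)=\sqrt{-\tau}\,\Sigma_i-a_i\rho_iy_i$ and the time parameter $\tau_i=-(a_i\rho_i/\rho)^2$. Both versions then pass $i\to\infty$ at fixed $\rho$ (using the entropy bound and local smooth convergence to handle the $i$-dependent base point, as you note) and finally send $\rho\to\infty$.
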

\begin{proof}
The proof is based on the continuity of $F$-functional (Proposition \ref{prop:F continuous}) and monotonicity formulas (\S \ref{subsection:F functional}). Indeed,
\begin{align*}
F_{x,t}(\mu)=&\lim_{i\rightarrow\infty}F_{x,t}(a_i(\Sigma_i-\rho_iy_i))\\
            =&\lim_{i\rightarrow\infty}F_{0,1}\big(\frac{a_i(\Sigma_i-\rho_iy_i)-x}{\sqrt t}\big)\\
            =&\lim_{i\rightarrow\infty}F_{\rho_i(y_i+x/(\rho_ia_i)),[\rho_i^2(t/a_i^2-1)/\rho_i^2]+1}(\Sigma_i)\\
            \leq & \limsup_{\rho\rightarrow\infty}\limsup_{i\rightarrow\infty}F_{\rho(y_i+x/(\rho_ia_i)),[\rho^2(t/a_i^2-1)/\rho_i^2]+1}(\Sigma_i)\\
            =&\limsup_{\rho\rightarrow\infty} F_{\rho y,1}(\Sigma) \ \ (\text{since } \rho_i\rightarrow\infty \text{ and } a_i\rho_i\rightarrow\infty)\\
            =&\lim_{\rho\rightarrow\infty}F_{\rho y,1}(\Sigma) \ \ (\text{by monotonicity formulas}).
\end{align*}
Thus we complete the proof.
\end{proof}

\begin{theorem}\label{Theorem: blow up shrinker to get translator}
Suppose $\{\Sigma_i\}\subseteq \mathcal M_\mathcal S(\Lambda,g)$ (see (\ref{def of M space})) and $p_i\in \Sigma_i$ such that $|p_i|\to \infty$. Then the blow-up sequence 
\[\widetilde \Sigma_i=\frac{|p_i|}{2}(\Sigma_i- p_i)\]
locally smoothly converges to a Bowl soliton $\widetilde \Sigma$ with direction $-y$, or $\mathbb R_y\times \mathbb R$ up to a subsequence. Here $y$ is the limit point of $p_i/|p_i|$. Moreover if there is a constant $\delta>0$ such that $|A|(p_i)\geq \delta|p_i|$, then $\widetilde \Sigma$ is not a plane.
\end{theorem}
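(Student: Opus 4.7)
My plan is to interpret the rescaling as a parabolic blow-up of the mean curvature flow generated by each shrinker. Since $\Sigma_i$ is a self-shrinker, the family $\{\sqrt{1-t}\,\Sigma_i\}_{t<1}$ is a smooth MCF with $\Sigma_i$ occurring at time $t=0$ and the singular time at $t=1$. Setting $\lambda_i=|p_i|/2$ and $y_i=p_i/|p_i|$, I rescale parabolically about $(p_i,0)$ with scale $\lambda_i$, producing mean curvature flows
\[
\wti\Sigma_i^t \;=\; \lambda_i\bigl(\sqrt{1-t/\lambda_i^2}\,\Sigma_i-p_i\bigr),\qquad t<\lambda_i^2,
\]
with $\wti\Sigma_i^0=\wti\Sigma_i$. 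A Taylor expansion of $q\mapsto\lambda_i(\sqrt{1-t/\lambda_i^2}\,q-p_i)$ around $q\approx p_i$, combined with the identity $p_i=2\lambda_i y_i$, gives
\[
\wti\Sigma_i^t \;=\; \wti\Sigma_i^0 - t\,y_i + O(1/\lambda_i)
\]
in Hausdorff distance on any fixed compact set. The shrinking effect thus dies in the limit: any subsequential Brakke limit $\{\mu_t\}_{t\in\R}$ must satisfy $\mu_t=\mu_0-ty$, exhibiting $\mu_0$ as a translating soliton in the direction $-y$ in the varifold sense, satisfying $H=\langle y,\mathbf n\rangle$.

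To extract and classify the limit, I note that entropy is scale-invariant, so $\lambda(\wti\Sigma_i^t)\le\Lambda$ and Proposition \ref{Proposition: entropy and area growth are equivalent} gives locally uniform area bounds. Ilmanen's compactness (Lemma \ref{lem:brakke compactness}) then produces an integral Brakke flow limit $\{\mu_t\}_{t\in\R}$. To upgrade varifold convergence to $C^\infty_{\mathrm{loc}}$ convergence I would check that $\mu_0$ is supported on a smooth embedded translator with unit multiplicity: the translator equation is elliptic (equivalently, $\mu_0$ is stationary in the Ilmanen conformal metric $e^{-\langle y,x\rangle/2}\delta_{ij}$), so Allard regularity combined with standard elliptic theory produces smoothness on the top-density locus, and the embeddedness of each $\Sigma_i$ together with the $F$-functional continuity of Proposition \ref{prop:F continuous} rules out higher-multiplicity sheeting. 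Theorem \ref{thm:Brakke regularity} then upgrades the convergence to $C^\infty_{\mathrm{loc}}$. The limit $\wti\Sigma$ is thus a smooth embedded translator in $\R^3$ with translation direction $-y$ and bounded entropy, so the low-entropy translator classification recalled in Appendix \ref{section:Translating solitons with low entropy} (after \cite{Her18}) forces $\wti\Sigma$ to be either a Bowl soliton of axis $-y$ or a plane. A plane which is a translator with direction $-y$ must contain $\R_y$, hence has the form $\R_y\times\R$.

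Finally, the moreover clause is an immediate consequence of scale invariance: $|A_{\wti\Sigma_i}|(0)=|A_{\Sigma_i}|(p_i)/\lambda_i\ge 2\delta$, so $C^\infty_{\mathrm{loc}}$ convergence forces $|A_{\wti\Sigma}|(0)\ge 2\delta>0$ and excludes the plane. The principal obstacle is the smoothness and multiplicity-one step inside the middle paragraph: with only the entropy bound $\Lambda$, which may far exceed any threshold for direct applicability of Brakke regularity, the varifold limit could a priori carry higher multiplicity or be singular. I expect this to be resolved by combining the elliptic translator equation, the embeddedness of each $\Sigma_i$, and the identity $\mu_t=\mu_0-ty$ to locate at least one smooth unit-density point of $\mu_0$, whereupon elliptic regularity and connectedness propagate smoothness across the full limit.
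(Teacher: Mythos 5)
Your high-level outline matches the paper's — rescale, pass to a limit that satisfies the translator equation, classify via the low-entropy translator theorem, exclude the plane using scale invariance of $|A|$ — but as written the proposal has two genuine gaps, and the first one you flag yourself.

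\textbf{Gap 1 (smooth convergence).} You route the limit through Brakke flow compactness and then try to upgrade to $C^\infty_{\mathrm{loc}}$ convergence by ruling out singularities and higher multiplicity in the varifold limit. You correctly observe that the entropy bound $\Lambda$ alone is too weak for this, and the closing sentence (``I expect this to be resolved by combining the elliptic translator equation, the embeddedness of each $\Sigma_i$, and the identity $\mu_t=\mu_0-ty$ to locate at least one smooth unit-density point\dots'') is an expectation, not an argument: a varifold limit could be a multiplicity-two plane, or singular, and having one good point plus connectedness does not propagate. The paper avoids all of this by invoking Lemma~\ref{lem:A linear bounded} (the a priori second-fundamental-form bound $|A^{\Sigma_i}|\le C(1+|x|)$, a uniform Choi--Schoen/Song-type estimate valid on all of $\mathcal M_{\mathcal S}(\Lambda,g)$). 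After the rescaling by $\lambda_i=|p_i|/2$, this gives a \emph{locally uniform} curvature bound on $\wti\Sigma_i$, which forces locally smooth subsequential convergence with multiplicity one directly — no Brakke flow, no Allard, no multiplicity argument needed. That lemma is the ingredient your proposal is missing.

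\textbf{Gap 2 (the low-entropy threshold).} To invoke Theorem~\ref{thm:translator low entropy} you need $\lambda(\wti\Sigma)<2$, but scale-invariance of entropy only gives you $\lambda(\wti\Sigma_i)\le\Lambda$, which may well exceed $2$. The paper obtains the sharp bound via Proposition~\ref{prop:entropy upper bound}: the $F$-functional of the limit is controlled by $\lim_{\rho\to\infty}F_{\rho y,1}(\Sigma)$, which (by the ends classification of $\Sigma$, Theorem~\ref{thm:tangent flow multiplicity 1}) is at most $\lambda_1<2$. This step is entirely absent from your proposal; ``bounded entropy'' does not suffice to run the classification.

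The moreover clause and the derivation of the translator equation (whether via your parabolic-rescaling-of-$\{\sqrt{1-t}\,\Sigma_i\}$ picture, or the paper's direct computation of the rescaled shrinker equation) are fine. But the two gaps above mean the proposal does not constitute a proof as it stands.
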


\begin{proof}

By Lemma \ref{lem:A linear bounded}, we conclude that there exists a constant $C$ such that $\Sigma_i$ has uniformly curvature bound 
\[|A^{\Sigma_i}|\leq C(1+|x|),\]
where $A^{\Sigma_i}$ is the second fundamental form of $\Sigma_i$. Then the rescaling sequence $\widetilde \Sigma_i$ has a locally uniformly curvature bound, which implies that there is a subsequence of $\widetilde \Sigma_i$ locally smoothly converging to $\widetilde \Sigma$. Note $\widetilde \Sigma_i$ satisfies the equation
\[{\widetilde  H=\langle\frac{2x}{|p_i|^2},\mathbf{n}\rangle + \langle\frac{p_i}{|p_i|},\mathbf{n}\rangle.}\]
Passing to a limit, then $\widetilde \Sigma$ satisfies the equation 
\[\widetilde  H=\langle y,\mathbf{n}\rangle,\]
where $y=\lim_{i\to \infty}p_i/|p_i|$. Hence the limit $\widetilde \Sigma$ is a translating soliton.

By Proposition \ref{prop:entropy upper bound}, 
\[F_{x,t}(\widetilde \Sigma)\leq \lim_{\rho\rightarrow\infty}F_{\rho y,1}(\Sigma)\leq \lambda_1.\] 
This entropy bound implies that $\widetilde \Sigma$ is a Bowl soliton or $\mathbb R_y\times \mathbb R$ by Theorem \ref{thm:translator low entropy} (see also Choi-Haslhofer-Hershkovits \cite{CHH18}*{Theorem 1.2}).

Now we suppose that there is $\delta>0$ such that $|A|(p_i)\geq \delta|p_i|$. Then after rescaling we conclude that $|\widetilde  A|(0)\geq \delta>0$. Thus the limit cannot be a plane.
\end{proof}

\begin{remark}
The results in this section show a part of the correspondence of self-shrinkers and translating solitons which have been pointed out by Ilmanen in \cite{Ilm94}*{Appendix J}. We believe that there would be a more general and uniform arguments to set up this kind of correspondence.
\end{remark}

\section{Main results}\label{section:main results}

For a subset $U\subset \mb R^3$ and $r\geq 0$, define the related conical end as
\begin{equation}\label{def of C_U}
\mathcal C_{U,r}:=\{tx:x\in U, t\geq r\}.
\end{equation}
We also use $\mc C_U$ to if $r=0$.

In this section, $\{\Sigma_i\}$ is always assumed to be a sequence in $\mathcal M_\mathcal S(\Lambda,g)$. By \cite{CM12_2}, there exists $\Sigma\in\cup_{k\leq g}\mathcal M_\mathcal S(\Lambda ,k)$ such that $\Sigma_i\rightarrow\Sigma$ locally smoothly up to a subsequence (still denoted by $\{\Sigma_i\}$). By L. Wang \cite{Wang16} (see our statement in Theorem \ref{thm:tangent flow multiplicity 1}), 
\[S^2\cap \lim_{\rho\rightarrow\infty}\rho^{-1}\Sigma\]
consists of disjoint simple closed curves and points. Therefore, there exists large $R$ such that $\Sigma\setminus B_R(0)$ consists of self-shrinking ends $\Gamma_1,\Gamma_2,\cdots,\Gamma_k$.

We say that $\mathcal C_{U,R}$ is a {\em conical neighborhood} of $\Gamma_i$ if $\mathcal C_{U,R}\cap\Sigma=\Gamma_i$.

The key ingredient is the following theorem. 
\begin{theorem}\label{thm:shrinker converge}
Let $\{\Sigma_i\}$ be a sequence of self-shrinkers in $\mathcal M_\mathcal S(\Lambda,g)$ (see (\ref{def of M space}) for definition) that locally smoothly converges to $\Sigma$. Suppose that there exist an open set $U\subset S^2(1)$ and $R>0$ such that $\mathcal C_{U,R}\cap\Sigma$ is a self-shrinker end $\Gamma$. Then up to a subsequence, one of the following happens:
\begin{enumerate}
  \item $\Gamma$ is asymptotic to a cone and $\Sigma_i\cap\mathcal C_{U,R}$ is diffeomorphic to $\Gamma$;
  \item $\Gamma$ is asymptotic to a cylinder, and $\Sigma_i\cap\mathcal C_U$ is diffeomorphic to $\Gamma$ or a disk.
\end{enumerate}
\end{theorem}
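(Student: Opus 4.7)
The proof splits according to L.\ Wang's end classification (Theorem \ref{thm:tangent flow multiplicity 1}): $\Gamma$ is either asymptotic to a regular cone or to a round cylinder. In both cases, local smooth convergence $\Sigma_i\to\Sigma$ combined with Lemma \ref{lem:end curve hausdorff converges} lets me assume, after passing to a subsequence and possibly enlarging $R$, that the relevant portion of $\Sigma_i\cap\mathcal C_U$ lies in an arbitrarily small tubular neighborhood of $\Gamma$; write $\Gamma_i$ for its component close to $\Gamma$.

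\emph{Conical case.} The plan is to show that $|x|^2$ has no critical points on $\Gamma_i$ for $i$ large, so that the gradient flow of $|x|^2|_{\Gamma_i}$ deformation-retracts $\Gamma_i$ onto $\Gamma_i\cap\partial B_R$; since the latter is $C^\infty$-close to $\Gamma\cap\partial B_R$ and local smooth convergence gives a diffeomorphism on $B_R\cap\mathcal C_U$, this yields $\Sigma_i\cap\mathcal C_{U,R}\cong\Gamma$. Suppose for contradiction that $p_i\in\Gamma_i$ were a critical point of $|x|^2$. Then $p_i\perp T_{p_i}\Sigma_i$, so the self-shrinker equation gives $|H|(p_i)=|p_i|/2$ and hence $|A|(p_i)\geq|p_i|/(2\sqrt{2})$. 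Since $\Gamma$ admits no such critical points for $R$ large, smooth convergence forces $|p_i|\to\infty$, and Theorem \ref{Theorem: blow up shrinker to get translator} (with $\delta=1/(2\sqrt{2})$) identifies the subsequential smooth limit of $\tfrac{|p_i|}{2}(\Sigma_i-p_i)$ as a non-planar translator, necessarily the Bowl soliton of direction $-y$ where $y=\lim p_i/|p_i|$, whose entropy equals $\lambda_1>1$. But Proposition \ref{prop:entropy upper bound} bounds this entropy above by $\lim_{\rho\to\infty}F_{\rho y,1}(\Sigma)$, which for a conical end equals the Gaussian area of the tangent plane to the asymptotic cone at $y$, namely exactly $1$. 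This contradicts $\lambda_1>1$.

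\emph{Cylindrical case.} Let $y$ be the axis direction of the asymptotic cylinder, so $\lim_{\rho\to\infty}F_{\rho y,1}(\Sigma)=\lambda_1$. Suppose first that $\Gamma_i$ contains an end asymptotic to a cylinder with axis $y_i$; then $y_i\to y$ by Lemma \ref{lem:end curve hausdorff converges}, and I claim that for every sequence $\rho_i\to\infty$ the translates $\Sigma_i-\rho_iy_i$ subconverge smoothly to the round self-shrinking cylinder $\mathbb R_y\times S^1(\sqrt{2})$. By Brakke compactness (Lemma \ref{lem:brakke compactness}), a Radon-measure subsequential limit $\nu$ exists and is $F$-stationary; Proposition \ref{prop:entropy upper bound} gives $F_{0,1}(\nu)\leq\lambda_1$, while shrinker shift-monotonicity applied separately to each $\Sigma_i$ yields $F_{\rho_iy_i,1}(\Sigma_i)\geq\lambda_1$, so Proposition \ref{prop:F continuous} pins $F_{0,1}(\nu)=\lambda_1$. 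Running the same argument for the shifted sequence $\Sigma_i-(\rho_i+\tau)y_i$ for any $\tau\in\mathbb R$ gives $F_{\tau y,1}(\nu)\equiv\lambda_1$, and rigidity in the monotonicity formula then splits $\nu$ off the line $\mathbb R_y$; combined with $\lambda(\nu)=\lambda_1$ and the Bernstein--Wang low-entropy classification \cite{BW17}, $\nu$ must be the round cylinder with multiplicity one. Brakke regularity (Theorem \ref{thm:Brakke regularity}) upgrades Radon to smooth convergence, and combining this uniform cylindrical behavior at infinity with local smooth convergence on compact sets gives a diffeomorphism $\Gamma_i\cong\Gamma$. If instead no end of $\Gamma_i$ inside $\mathcal C_U$ is cylindrical (so $\Gamma_i$ is compact, or has only conical ends), a suitable blow-down argument---together with the Bowl-vs.-entropy contradiction from the conical case applied to any critical point of $|x|^2$ on $\Gamma_i$---forces $\Gamma_i$ to be a topological disk.

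The principal obstacle will be the cylindrical sub-case containing a cylindrical end: the entropy equalities pinning $F_{0,1}(\nu)=\lambda_1$ are sharp with no slack, the splitting of the a priori singular varifold $\nu$ off an $\mathbb R_y$-factor relies on a rigidity statement in the monotonicity formula for $F$-stationary limits, and finally the smooth convergence---which must be established for \emph{every} sequence $\rho_i\to\infty$ simultaneously---has to be translated into a diffeomorphism $\Gamma_i\cong\Gamma$ matching the compact core of $\Sigma_i$ with the cylindrical part at infinity.
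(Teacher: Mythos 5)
Your conical-case argument is essentially the paper's (Theorem \ref{Theorem: Asymptotic to a cone main theorem}): Morse theory on $|x|^2$, a blow-up at a putative critical point to a Bowl soliton, and the entropy contradiction $\lim_{\rho\to\infty}F_{\rho y,1}(\Sigma)=1<\lambda_1$. Likewise, your treatment of the cylindrical end whose $\Gamma_i$ itself has a cylindrical end tracks Steps 1--2 of Proposition \ref{prop:cylinder end} closely; you omit the intermediate step (Step 3) confirming $\Sigma_i\cap\mathcal C_{U,R}$ lies inside a tubular cylinder $N_i$, without which the Morse-theoretic conclusion does not follow, but this is a fixable omission.

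The genuine gap is in the final two sentences, where you dispose of the two remaining subcases --- $\Gamma_i$ having only \emph{conical} ends, and $\Gamma_i$ being \emph{compact} --- with ``a suitable blow-down argument together with the Bowl-vs.-entropy contradiction from the conical case forces $\Gamma_i$ to be a topological disk.'' There are two problems. First, the conclusion is wrong in the conical-end subcase: Proposition \ref{prop:cone} shows $\Gamma_i$ is an annulus diffeomorphic to $\Gamma$ there, not a disk; only the compact subcase produces a disk. Second, and more seriously, the ``Bowl-vs.-entropy contradiction from the conical case'' does not transfer. That contradiction hinged on the sharp bound $\lim_{\rho\to\infty}F_{\rho y,1}(\Sigma)=1<\lambda_1$, which is specific to $\Gamma$ being conical. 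When $\Gamma$ is cylindrical, $\lim_{\rho\to\infty}F_{\rho y,1}(\Sigma)=\lambda_1$, so Proposition \ref{prop:entropy upper bound} bounds the blow-up entropy by exactly $\lambda_1$ --- which is precisely the Bowl soliton's entropy and yields no contradiction. To rule out bad critical points the paper needs the much finer structure that the blow-down of $\Sigma_i$ about the actual direction $y_i$ is close to a round cylinder, giving $F\leq 1$ rather than $F\leq\lambda_1$ at the relevant base points; in the conical-end subcase this in turn rests on the curve-shortening-flow argument (Lemmas \ref{Lem: entropy of cone gamma equal to entropy of gamma taking limit} and \ref{Lem:2sqrt2gamma i converges to S1}, Proposition \ref{prop:smooth for all}, adapted from \cite{BW18}) that normalizes the link curve $\gamma_i$ to a round circle, and in the compact subcase it rests on a blow-up at the farthest point $q_i$ to a Bowl soliton whose neck structure dictates the geometry all the way down. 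None of this machinery appears in your sketch, and without it the argument would fail at exactly the step you flag as ``the principal obstacle.''
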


The proof of Theorem \ref{thm:shrinker converge} will be postponed to \S \ref{section:cone} and \S \ref{section:cylinder}.

In the rest of this section, we use Theorem \ref{thm:shrinker converge} to prove our main results of this paper. We also need Lemma \ref{lem:finite ends} in the Appendix. Let us state it here.

\begin{lemma}[Lemma \ref{lem:finite ends}]
Let $M$ be a self-shrinker with finite entropy and genus. Then $M$ has finite ends.
\end{lemma}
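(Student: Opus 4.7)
The plan is to argue by contradiction, combining L.\ Wang's classification of self-shrinker ends (Theorem 1.1 of \cite{Wang16}, recalled in Appendix \ref{section:end classification}) with the entropy bound.

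Fix $R$ large enough that every non-compact component of $M\setminus\bar B_R$ is either a conical end (asymptotic to a regular cone over a simple closed curve in $S^2$) or a cylindrical end (asymptotic to a round self-shrinking cylinder with axis $y\in S^2$). If $M$ had infinitely many ends, then one of these two types would have to occur infinitely often, so it suffices to rule each case out.

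For the conical case, I would identify the blow-down $M_\infty:=\lim_{\rho\to\infty}\rho^{-1}M$ as a union of planes through the origin. The rescalings $M_\rho:=\rho^{-1}M$ satisfy $H_{M_\rho}(y)=\tfrac{\rho^2}{2}\langle y,\n_{M_\rho}\rangle$, and the exponential decay of a conical end toward its asymptotic cone (\cite{Wang14}) forces $\langle y,\n_{M_\rho}\rangle=o(\rho^{-2})$, so each smooth conical component of $M_\infty$ is a $2$-dimensional smooth minimal cone in $\R^3$; its link in $S^2$ is a closed geodesic (i.e.\ a great circle), so the component is a plane through the origin. Each such plane contributes exactly $1$ to $F_{0,1}$, while cylindrical contributions to $M_\infty$ are $1$-dimensional (zero $2$-dimensional mass), so Proposition \ref{prop:F continuous} yields
\[\#\{\text{conical ends of }M\}\;=\;F_{0,1}(M_\infty)\;=\;\lim_{t\to\infty}F_{0,t}(M)\;\leq\;\lambda(M)\;<\;\infty.\]

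For the cylindrical case, distinct cylindrical ends have distinct asymptotic axes $\{y_\alpha\}\subset S^2$ by L.\ Wang's rigidity \cite{Wang16_2}. Assume for contradiction there are infinitely many, and pass to a subsequence with $y_\alpha\to y_\infty\in S^2$. Given any fixed $r>2\sqrt 2$ and any integer $N$, I would choose $R$ so large that each of the first $N$ cylindrical ends is $C^1$-close to its asymptotic cylinder inside $B_r(Ry_\infty)$ and has its asymptotic axis passing within $r/2$ of $Ry_\infty$; both conditions are achievable since $y_\alpha\to y_\infty$. Each such end then contributes area at least some positive $c(r)$ to $B_r(Ry_\infty)$, so by Proposition \ref{Proposition: entropy and area growth are equivalent},
\[N\cdot c(r)\;\leq\;\area(M\cap B_r(Ry_\infty))\;\leq\;C\lambda(M)\,r^2,\]
contradicting the unboundedness of $N$.

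The main obstacle is justifying the uniform $C^1$-closeness in the cylindrical case for infinitely many distinct ends at one common radius $R$. To secure this uniformity, I would combine L.\ Wang's decay estimates for cylindrical ends \cite{Wang16_2} with Colding-Minicozzi's smooth compactness of $\bigcup_{k\leq g}\mathcal M_\mathcal S(\Lambda,k)$ \cite{CM12_2} via a diagonalization over the index $\alpha$; the entropy bound and genus bound both enter crucially here to prevent the $N$ cylinders from degenerating into a single higher-multiplicity object.
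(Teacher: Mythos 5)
Your proposal takes a genuinely different route from the paper, but both of the two cases in your outline have gaps that I do not think are repairable as written.

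\textbf{Conical case.} Your key claim is that each conical end is asymptotic to a \emph{plane}, via the chain: rescale the shrinker equation to get $H_{M_\rho}=\tfrac{\rho^2}{2}\langle y,\n\rangle$, argue that $\langle y,\n\rangle=o(\rho^{-2})$, conclude the blow-down is a smooth minimal cone, hence a plane. The middle step is wrong: for a self-shrinker end asymptotic to a regular cone $\mathcal C(\gamma)$ with non-geodesic link $\gamma$, one has $H_M(x)\sim H_{\mathcal C}(x)\sim\kappa_\gamma/|x|$ near infinity, so $\langle x,\n_M\rangle=2H_M(x)=\Theta(|x|^{-1})$; in the rescaled variable $y=x/\rho$ this is $\langle y,\n_{M_\rho}\rangle=\Theta(\rho^{-2})$, \emph{not} $o(\rho^{-2})$. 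The limit $\tfrac{\rho^2}{2}\langle y,\n_{M_\rho}\rangle\to H_{\mathcal C}$ is exactly the mean curvature of the asymptotic cone, which need not vanish. Asymptotic cones of conical self-shrinker ends are, in general, non-planar regular cones; your argument would in effect prove they are always planes, which is not true and is not what \cite{Wang14} asserts. Consequently the identity $\#\{\text{conical ends}\}=F_{0,1}(M_\infty)$ also fails: the correct statement is $F_{0,1}(M_\infty)=\sum_i\ell(\gamma_i)/(2\pi)$, and without a uniform lower bound on the link lengths $\ell(\gamma_i)$ (which you do not supply and which is not obvious) this does not bound the number of ends.

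\textbf{Cylindrical case.} The area-growth idea runs into a scale conflict that a diagonalization does not obviously resolve. The axes are rays $\{t y_\alpha\}$ through the origin: for the first $N$ of them to pass within $r/2$ of $Ry_\infty$ you need $R\,\max_{\alpha\le N}|y_\alpha-y_\infty|<r/2$, i.e.\ $R$ bounded above in terms of the spread of the $y_\alpha$; but for each of those $N$ ends to be $C^1$-close to its asymptotic cylinder inside $B_r(Ry_\infty)$ you need $R$ bounded \emph{below} by an end-dependent threshold. As $N$ grows you are forced to use ends whose directions cluster ever more tightly at $y_\infty$, and those may well require ever larger $R$ to resolve as cylinders; there is no a priori reason the product $(\text{spread})\times(\text{threshold})$ tends to zero. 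The Colding--Minicozzi compactness you invoke controls sequences of \emph{shrinkers}, not a family of ends of one fixed shrinker, so it does not directly supply the uniformity you need.

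\textbf{Contrast with the paper's argument.} The paper sidesteps both issues by never trying to pack many ends into one ball and never claiming the asymptotic cones are planar. Instead, for each accumulation direction $y$ it first observes via monotonicity that $F_{\rho y,1}(M)\ge\lim_{r\to\infty}F_{ry_i,1}(M)\in\{1,\lambda_1\}$, then invokes Theorem \ref{thm:tangent flow multiplicity 1} (the multiplicity-one tangent flow at infinity) to conclude that $M-\rho y$ converges locally smoothly to a \emph{single} plane or cylinder. This forces $M$ to be empty in a large annular region around the axis $\mathbb R_y$, while the directions $y_i$ of the other ends drift linearly away from $\mathbb R_y$; one then picks a point $L_1y_i$ in that empty region where nevertheless $F_{L_1y_i,1}(M)\ge 1$ (or $\ge\lambda_1$), a contradiction. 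The conical subcase requires an additional geometric argument (whether the rescaled link is contained in the ball or not) but is built on the same template. The crucial ingredient your proposal is missing, and which the paper leans on heavily, is precisely the multiplicity-one statement at infinity from \cite{Wang16}.
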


Now we are willing to prove our main results.

\medskip
\begin{proof}[Proof of Theorem \ref{main thm}]
Given a complete surface $M$ in $\mathbb R^3$, denote the number of genus of $M$ by $g(M)$.

Since $\Sigma$ has finite genus, then the classification of ends in \cite{Wang14} and Lemma \ref{lem:finite ends} implies that we can take $R$ large enough such that 
\[g(B_R(0)\cap \Sigma)=g(\Sigma).\]
Note that $\Sigma_i$ locally smoothly converges to $\Sigma$. Hence for large $i$,
\[g(B_R(0)\cap\Sigma_i)=g(B_R(0)\cap\Sigma).\]
Now by Theorem \ref{thm:shrinker converge}, $\Sigma_i\cap \mathcal C_{U,R}$ is diffeomorphic to $S^1\times \mathbb R$ or a disk. Hence 
\[g(\Sigma_i)=g(\Sigma_i\cap B_R(0))=g(\Sigma).\]
Thus we complete the proof.
\end{proof}

\medskip
\begin{proof}[Proof of Corollary \ref{cor:uniform decomposition}]
Assume on the contrary that $\{\Sigma_i\}\subseteq \mathcal M_\mathcal S(\Lambda,g)$ satisfy the following: for any $R>0$, there exists $i>0$ such that one of the connected components of $\Sigma_i\setminus B_R(0)$ is not diffeomorphic to an annulus or a disk.

By Theorem \ref{main thm}, $\Sigma_i$ locally smoothly converge to $\Sigma\in\mathcal M_\mathcal S(\Lambda,g)$. Then there exists $R_0>0$, such that $\Sigma\setminus B_{R_0}(0)$ consists of self-shrinker ends. Then by Theorem \ref{thm:shrinker converge}, each connected component of $\Sigma_i\setminus B_R(0)$ is diffeomorphic to an annulus or a disk, which leads to a contradiction.
\end{proof}

\medskip
Given a complete surface $M$ in $\mathbb R^3$, denote the number of ends of $M$ by $E(M)$. Now we prove the uniform bound of $E$. 
\begin{proof}[Proof of Corollary \ref{cor:uniform bound ends}]
Assume on the contrary that $\{\Sigma_i\}\subseteq \mathcal M_\mathcal S(\Lambda,g)$ and $E(\Sigma_i)\rightarrow\infty$. By Theorem \ref{main thm}, there is a subsequence of $\{\Sigma_i\}$ (still denoted by $\{\Sigma_i\}$) locally smoothly converges to $\Sigma\in\mathcal M_\mathcal S(\Lambda,g)$. Then by Theorem \ref{thm:shrinker converge}, $E(\Sigma)\geq E(\Sigma_i)$ for large $i$. Hence 
\[E(\Sigma)\geq \liminf_{i\rightarrow\infty}E(\Sigma_i)=+\infty,\]
which contradicts Lemma \ref{lem:finite ends}.

\end{proof}

\section{Asymptotic to a cone}\label{section:cone}
Now we are going to prove Theorem \ref{thm:shrinker converge}. 

This section is devoted to the case that $\Sigma\cap \mathcal C_{U,R}$ (see (\ref{def of C_U}) for notations) is asymptotic to a cone. The purpose is to show that $\Sigma_i\cap  \mathcal C_{U,R}$ is also a self-shrinker's end and it is asymptotic to a cone when $i$ is large enough. We first show that $\Sigma_i$ does not intersect $\mathcal C_{\partial U,R}$ for large $i$. 

\begin{lemma}\label{lem:end curve hausdorff converges}
Let $\{\Sigma_i\}$ be a sequence of self-shrinkers in $\mathcal M_\mathcal S(\Lambda,g)$ (see (\ref{def of M space}) for definition) that locally smoothly converges to $\Sigma$. Suppose that $\Sigma\setminus B_R(0)\subset \mc C_{U,R}$ for some real number $R>0$ and an open set $U\subset S^2(1)$. Then $\Sigma_i\setminus B_R(0)\subset\mc C_{U,R}$ for all large $i$. Particularly, $\lim_{\rho\rightarrow\infty }\rho^{-1}\Sigma_i\cap S^2(1)$ converges to a subset of $\lim_{\rho\rightarrow\infty}\rho^{-1}\Sigma\cap S^2(1)$ in the Hausdorff distance.
\end{lemma}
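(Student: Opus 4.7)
The plan is to argue by contradiction: suppose, after passing to a subsequence, there exist $p_i\in\Sigma_i$ with $|p_i|\geq R$ and $p_i/|p_i|\in S^2(1)\setminus U$. If $|p_i|$ is bounded, I extract $p_i\to p$ with $|p|\geq R$; local smooth convergence forces $p\in\Sigma\setminus B_R(0)\subset\mathcal C_{U,R}$, hence $p/|p|\in U$, contradicting that $p_i/|p_i|\to p/|p|$ with each $p_i/|p_i|$ lying in the closed set $S^2(1)\setminus U$. So I may assume $|p_i|\to\infty$.

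Writing $\mathcal L:=\lim_{\rho\to\infty}\rho^{-1}\Sigma\cap S^2(1)$ for the asymptotic link, note that $\mathcal L$ is compact by the finiteness of ends (Lemma~\ref{lem:finite ends}) and contained in $\bar U$. To get a clean decay estimate later, I first reduce to the case where $U$ is a tubular neighborhood of $\mathcal L$: pick open $U'$ and $R'>R$ with $\mathcal L\subset U'\subset\bar U'\subset U$ and $\Sigma\setminus B_{R'}(0)\subset\mathcal C_{U',R'}$; proving $\Sigma_i\setminus B_{R'}(0)\subset\mathcal C_{U',R'}$ for large $i$, combined with local smooth convergence on $B_{R'}(0)\setminus B_R(0)$, recovers the original conclusion. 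After relabeling $U'$ as $U$, the limit $y:=\lim p_i/|p_i|\in S^2(1)\setminus U$ sits at positive angular distance $\delta>0$ from $\mathcal L$.

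Now Theorem~\ref{Theorem: blow up shrinker to get translator} produces a local smooth subsequential limit $\widetilde\Sigma$ of the rescalings $\tfrac{|p_i|}{2}(\Sigma_i-p_i)$; $\widetilde\Sigma$ is a multiplicity-one Bowl soliton, the surface $\mathbb R_y\times\mathbb R$, or a plane, and $0\in\widetilde\Sigma$ as a regular point. Taking $\rho_i=|p_i|$, $y_i=p_i/|p_i|$ and $a_i=|p_i|/2$ in Proposition~\ref{prop:entropy upper bound} gives $F_{0,t}(\widetilde\Sigma)\leq\lim_{\rho\to\infty}F_{\rho y,1}(\Sigma)$ for every $t>0$. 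Because all directions of $\Sigma\setminus B_{R''}(0)$ lie within $\delta/2$ of $\mathcal L$ for some large $R''$, I get the separation $|x-\rho y|^2\geq(|x|-\rho)^2+(\delta/2)^2|x|\rho$ on $\Sigma\setminus B_{R''}(0)$, and combined with the area bound $\mathcal H^2(\Sigma\cap B_r(0))\leq C\Lambda r^2$ from Proposition~\ref{Proposition: entropy and area growth are equivalent} (plus trivial Gaussian decay on the compact piece $\Sigma\cap B_{R''}(0)$), a dyadic decomposition of $\Sigma$ in $|x|$ forces $\lim_{\rho\to\infty}F_{\rho y,1}(\Sigma)=0$. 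On the other hand $F_{0,t}(\widetilde\Sigma)\to 1$ as $t\to 0^+$, giving a contradiction at small $t$. The ``particularly'' statement then follows by applying the first part to shrinking open neighborhoods of $\mathcal L$; the main technical obstacle is the Gaussian decay estimate, and the openness of $U$ is precisely what makes the initial reduction to a tubular neighborhood of $\mathcal L$ possible.
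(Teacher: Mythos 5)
Your argument is correct, but it reaches the contradiction by a noticeably heavier route than the paper's. The paper obtains the lower bound $\lim_{\rho\to\infty}F_{\rho y,1}(\Sigma)\geq 1$ directly: pick $a_i\to 0$ with $0<1+a_i\rho_i^2\ll 1$, note $F_{\rho_iy_i,1+a_i\rho_i^2}(\Sigma_i)>1-1/i$ because $\rho_iy_i$ is a smooth point of $\Sigma_i$, and apply the monotonicity of $s\mapsto F_{sV,1+as^2}(\Sigma_i)$ together with Proposition~\ref{prop:F continuous}. You arrive at the same $\geq 1$ by invoking the translator blow-up (Theorem~\ref{Theorem: blow up shrinker to get translator}) and then Proposition~\ref{prop:entropy upper bound} with $a_i=|p_i|/2$ plus $F_{0,t}(\widetilde\Sigma)\to 1$ as $t\to 0^+$; this is correct but uses strictly more machinery (in particular the entropy classification of low-entropy translators) than is needed for this particular step. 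Where your write-up adds value is on the other side of the contradiction: the paper simply declares that $F_{\rho y,1}(\Sigma)\geq 1$ for all $\rho$ ``contradicts $\Sigma\setminus B_R(0)\subset\mathcal C_{U,R}$,'' whereas you make the needed Gaussian decay explicit, first reducing to an open set $U'$ with $\mathcal{L}\subset U'\subset\overline{U'}\subset U$ so that $y$ has positive angular distance $\delta$ from the asymptotic link $\mathcal{L}$, and then proving $\lim_{\rho\to\infty}F_{\rho y,1}(\Sigma)=0$ via the separation estimate $|x-\rho y|^2\geq(|x|-\rho)^2+c\,\delta^2|x|\rho$ on $\Sigma\setminus B_{R''}(0)$ and the quadratic area growth from Proposition~\ref{Proposition: entropy and area growth are equivalent}. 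That reduction is indeed the right fix for the degenerate possibility $y\in\partial U$, which the stated hypothesis ($y\notin U$, $\mathcal{L}\subset\overline{U}$) does not rule out. One caveat to flag: your reduction presumes $\mathcal{L}$ is \emph{compactly} contained in $U$, and $\Sigma\setminus B_R(0)\subset\mathcal C_{U,R}$ by itself only yields $\mathcal{L}\subset\overline{U}$; this mild strengthening of the hypothesis is satisfied in every application of the lemma in the paper (where $U$ is chosen with slack around a single end), and the paper's own proof implicitly relies on it as well, but it is worth stating.
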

\begin{proof}
Assume on the contrary that there exists $\rho_iy_i\in\Sigma_i$ (here $|y_i|=1$) so that $\rho_i>R$ and $\rho_iy_i\notin \mc C_{U,R}$. Denote by $y$ the limit of $y_i$. Then $y\not\in U$. Since $\Sigma_i$ locally smoothly converges to $\Sigma$, we must have $\rho_i\rightarrow\infty$. 

Take $a_i\rightarrow0$  so that 
\[0<1+a_i\rho_i^2\ll 1.\]
Because $\Sigma_i$ is a smooth surface, we must have
\[F_{\rho_iy_i,1+a_i\rho^2_i}(\Sigma_i)>1-1/i.\]
By the monotonicity formulas in Section \ref{subsection:F functional},
\begin{align*}
F_{\rho y,1}(\Sigma)=
\lim_{i\rightarrow\infty}F_{\rho y_i,1+a_i\rho^2}(\Sigma_i)\geq\liminf_{i\rightarrow\infty}F_{\rho_i y_i,1+a_i\rho_i^2}(\Sigma_i)\geq 1,
\end{align*}
which contradicts that $\Sigma\setminus B_R(0)\subset \mc C_{U,R}$.
\end{proof}

\bigskip
In the rest of this section, we prove the first part of Theorem \ref{thm:shrinker converge}.
\begin{theorem}\label{Theorem: Asymptotic to a cone main theorem}
Let $\Sigma_i,\Sigma, U$ and $\Gamma$ be the same as in Theorem \ref{thm:shrinker converge}. Suppose that there is a curve $\gamma\subseteq S^2(1)$ such that
 \[\lim_{\rho\rightarrow\infty}\rho^{-1}\Sigma\cap U=\gamma,\]
i.e. $\Sigma\cap \mathcal C_{U,R}$ is a conical end for some $R>0$. Then there exist $R>0,i_0>0$ such that $\Sigma_i\cap\mathcal C_{U,R}$ is diffeomorphic to $S^1\times[0,+\infty)$ for $i\geq i_0$.
\end{theorem}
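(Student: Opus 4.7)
The plan is to apply Morse theory to the function $f = |x|^2$ restricted to $\Sigma_i \cap \mathcal{C}_{U,R}$. By Lemma~\ref{lem:end curve hausdorff converges}, after possibly shrinking $U$ and enlarging $R$, one has $\Sigma_i \setminus B_R \subset \mathcal{C}_{U,R}$ for all large $i$; and since $\Sigma_i \to \Sigma$ locally smoothly while $\Gamma = \Sigma \cap \mathcal{C}_{U,R}$ is a single smooth end, the intersection $\Sigma_i \cap \partial B_R$ inside $\mathcal{C}_{U,R}$ is, for large $i$, a single simple closed curve $C^\infty$-close to $\Gamma \cap \partial B_R$. Thus $f$ is a proper exhaustion of $\Sigma_i \cap \mathcal{C}_{U,R}$ with this circle as its minimum set; if $f$ has no interior critical points, classical Morse theory identifies $\Sigma_i \cap \mathcal{C}_{U,R}$ with $S^1 \times [0,\infty)$, as desired.

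\textbf{Absence of critical points.} Suppose for contradiction that a subsequence carries critical points $p_i \in \Sigma_i \cap \mathcal{C}_{U,R}$ of $f$, i.e.\ $x_{p_i}^\top = 0$. Because $\Gamma$ is asymptotic to the regular cone $\mathcal{C}_\gamma$, its tangent planes at points of large norm contain the radial direction up to first order, so $\Gamma \setminus B_{R_0}$ has no critical points of $f$ for some $R_0>0$; local smooth convergence then forces $|p_i| \to \infty$. At a critical point the self-shrinker equation gives $x_{p_i} = 2H(p_i)\,\mathbf{n}$, so $|H(p_i)| = |p_i|/2$ and hence $|A|(p_i) \geq |H(p_i)|/\sqrt{2} = |p_i|/(2\sqrt{2})$. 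Theorem~\ref{Theorem: blow up shrinker to get translator} therefore applies with $\delta = 1/(2\sqrt{2})$, so a subsequence of the blow-ups $\widetilde{\Sigma}_i := \frac{|p_i|}{2}(\Sigma_i - p_i)$ converges locally smoothly to a non-planar translating soliton, namely a Bowl soliton with direction $-y$, where $y := \lim p_i/|p_i|$.

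\textbf{Entropy contradiction.} The Bowl soliton has entropy equal to the cylinder entropy $\lambda_1 > 1$ by \cite{Gua16}. On the other hand, Proposition~\ref{prop:entropy upper bound} yields the pointwise bound $F_{x,t}(\widetilde{\Sigma}) \leq \lim_{\rho \to \infty} F_{\rho y, 1}(\Sigma)$, and therefore $\lambda(\widetilde{\Sigma}) \leq \lim_{\rho \to \infty} F_{\rho y, 1}(\Sigma)$. Since $\Sigma$ is asymptotic inside $\mathcal{C}_{U,R}$ to $\mathcal{C}_\gamma$ with multiplicity one, for any $y$ the density $F_{\rho y, 1}(\Sigma)$ tends either to $1$ (when $y \in \gamma$, where $\Sigma$ locally resembles a single multiplicity-one $2$-plane through $\rho y$) or to $0$ (when $y \notin \gamma$, as $\Sigma$ lies exponentially far from $\rho y$ in the Gaussian sense), using that the compact part $\Sigma \cap B_R$ also contributes exponentially decaying mass. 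In either case $\lim_{\rho \to \infty} F_{\rho y, 1}(\Sigma) \leq 1 < \lambda_1$, contradicting the entropy of the Bowl soliton and completing the proof.

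\textbf{Main obstacle.} The most delicate step is the entropy upper bound $\lim_{\rho \to \infty} F_{\rho y, 1}(\Sigma) \leq 1$, which requires using L.~Wang's asymptotic classification to show that $\Gamma$ approaches the cone smoothly enough that its local area near $\rho y$ matches that of a single plane, with no hidden folding or higher multiplicity, and that the tails coming from $\Sigma \setminus \Gamma$ indeed decay in the Gaussian weight. A second, more technical point is the preparatory choice of $U$ and $R$: one wants $\Sigma_i \cap \mathcal{C}_{U,R}$ to be \emph{exactly one} topological end of $\Sigma_i$ (so the Morse step identifies the whole component), and for this the combination of Lemma~\ref{lem:end curve hausdorff converges} with the local smooth convergence on $\partial B_R$ must be invoked uniformly in $i$.
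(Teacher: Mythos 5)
Your proposal is correct and follows essentially the same route as the paper: use Morse theory on $|x|^2$ over $\Sigma_i\cap\mathcal C_{U,R}$, show that a critical point $p_i$ must escape to infinity, blow up at $p_i$ to produce a Bowl soliton via Theorem~\ref{Theorem: blow up shrinker to get translator}, and contradict this with the entropy bound $F_{x,t}(\widetilde\Sigma)\leq\lim_{\rho\to\infty}F_{\rho y,1}(\Sigma)\leq 1<\lambda_1$ from Proposition~\ref{prop:entropy upper bound}. A small point in your favor: you carefully deduce $|A|(p_i)\geq |H(p_i)|/\sqrt{2}=|p_i|/(2\sqrt2)$ from Cauchy--Schwarz, whereas the paper writes $|A|(p_i)\geq \tfrac12|p_i|$, which is not literally correct (the inequality $|A|\geq|H|$ can fail, e.g.\ for a round sphere), though either constant suffices since Theorem~\ref{Theorem: blow up shrinker to get translator} only needs some $\delta>0$ with $|A|(p_i)\geq\delta|p_i|$.
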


\begin{proof}
Let us consider the function $|x|^2$ defined on $\Gamma_i$. Take $R$ large enough such that $\Sigma\cap \mathcal C_{U,R}$ is diffeomorphic to $S^1\times [0,+\infty)$. 

If $|x|^2$ has no critical points in $\Sigma_i\cap \mathcal C_{U,R}$ for large $i$, $\Sigma_i$ locally smoothly converging to $\Sigma$ implies that the level set $\{|x|=R\}\cap\Sigma_i\cap \overline{\mathcal C_{U,R}}$ is $S^1$ when $i$ is sufficiently large. Then by standard Morse theory we know that $\Sigma_i\cap \mathcal C_{U,R}$ is diffeomorphic to $S^1\times[0,+\infty)$ for sufficiently large $i$.

To complete the proof, it suffices to show that $|x|^2$ has no critical point in $\Sigma_i\cap \mathcal C_{U,R}$ for sufficiently large $i$. We prove it by contradiction. Assume on the contrary that $|x|^2$ has a critical point $p_i\in\Sigma_i\cap \mathcal C_{U,R}$ for large $i$. Since $\Sigma_i$ locally smoothly converges to $\Sigma$, we must have $|p_i|\rightarrow\infty$.

We claim $p_i^\bot=p_i$. Suppose $\{e_1,e_2\}$ is an orthogonal frame around $p_i$ on $\Sigma_i\cap \mathcal C_{U,R}$. Since $p_i$ is a critical point of $|x|^2$,
\[\nabla_{e_j}\langle x,x\rangle(p_i)=0,\ \ \ \ j=1,2.\]
Note that
\[\nabla_{e_j}\langle x,x\rangle(p_i) = 2\langle e_i,p_i\rangle,\]
which implies that $p_i^\top=0$, hence $p_i^\bot= p_i$.

Now set $y_i=p_i/|p_i|$ and $\rho_i=|p_i|$. $|p_i|\rightarrow \infty$ implies that $\rho_i\rightarrow\infty$. Let $y$ be a limit point of $y_i$. Then  $y\in\lim_{\rho\rightarrow\infty}\rho^{-1}\Sigma\cap U$ and $\lim_{\rho\rightarrow\infty}F_{\rho y,1}(\Sigma)=1$.

Following the blow-up process in Theorem \ref{Theorem: blow up shrinker to get translator}, we define $\widetilde \Sigma_i=\frac{|p_i|}{2}(\Sigma_i- p_i)$. Then $\widetilde \Sigma_i$ locally smoothly converges to a translating soliton, denoted by $\widetilde{\Sigma}$. Moreover, $H(p_i)=1/2|p_i^\bot|=1/2|p_i|$ implies that $|A|(p_i)\geq 1/2|p_i|$, and Theorem \ref{Theorem: blow up shrinker to get translator} implies that $\widetilde \Sigma$ is a Bowl soliton.

On the other hand, by Proposition \ref{prop:entropy upper bound},
\[F_{x,t}(\widetilde \Sigma)\leq \limsup_{\rho\rightarrow\infty}F_{\rho y,1}(\Sigma)\leq 1.\]
Such an upper bound leads to a contradiction to $\widetilde \Sigma$ being a Bowl soliton. Thus we complete the proof.
\end{proof}

\section{Asymptotic to a cylinder}\label{section:cylinder}
We now prove the second part of Theorem \ref{thm:shrinker converge}. In this section, $U$ is always assumed to be a subset of $S^2$ such that $U\cap \lim_{\rho\rightarrow \infty}\rho^{-1}\Sigma$ has only one point $y$. Take $R$ large enough such that $\Sigma\cap\mathcal C_{U,R}$ is diffeomorphic to $S^{1}\times[0,+\infty)$. It follows from Lemma \ref{lem:end curve hausdorff converges} that $\lim_{\rho\rightarrow\infty}\rho^{-1}\Sigma_i\cap U$ consists of disjoint simple closed curves and points for $i$ sufficiently large. 

\bigskip
Given a point $x\neq 0$ in $\mathbb R^3$, we denote by $P(x)$ the hyperplane containing $x$ meanwhile being perpendicular to the vector $x$. We will use the following notation to denote the disk of plane $P(x)$:
\[B_r^x(y):=B_r(y)\cap P(x).\]

The purpose of this section is to prove that $\Sigma_i\cap\mathcal C_{U,R}$ is diffeomorphic to $S^1\times[0,+\infty)$ or a disk. The proof is divided into three cases by the types of $\mathcal C_U\cap\Sigma_i$ (see \ref{def of C_U} for notations): $\lim_{\rho\rightarrow\infty}\rho^{-1}\Sigma_i\cap U$ is either empty or one of the connected components of $\lim_{\rho\rightarrow\infty}\rho^{-1}\Sigma_i\cap U$ is a point, or a curve. 

\subsection{$\Sigma_i\cap\mathcal C_U$ contains a cylindrical end}
Firstly, we consider that $\lim_{\rho\rightarrow\infty}\rho^{-1}\Sigma_i\cap U$ always contains a connected component which is a single point $y_i$. Then by Lemma \ref{lem:end curve hausdorff converges}, $y_i\rightarrow y$ as $i\rightarrow\infty$. The goal of this section is to prove the following:

\begin{proposition}\label{prop:cylinder end}
For sufficiently large $i$, $\Sigma_i\cap\mathcal C_{U,R}$ is diffeomorphic to $\Sigma\cap\mathcal C_{U,R}$. 
\end{proposition}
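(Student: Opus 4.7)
The plan is to establish that, for every sequence $\rho_i \to \infty$, the translated surfaces $\Sigma_i - \rho_i y_i$ converge locally smoothly to a multiplicity-one round self-shrinking cylinder with axis $\mathbb R_y$. Combining this with the local smooth convergence $\Sigma_i \to \Sigma$ inside any ball, and then sliding the base point along the cylindrical direction, one can conclude that $\Sigma_i \cap \mathcal C_{U,R}$ is an embedded annulus diffeomorphic to $\Gamma$ for all large $i$.

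To produce the cylindrical limit, I first fix $\rho_i \to \infty$ and let $\nu$ be a subsequential Radon limit of $\mathcal H^2 \lfloor (\Sigma_i - \rho_i y_i)$. Proposition \ref{prop:entropy upper bound} gives $F_{x,t}(\nu) \leq \lim_{\rho \to \infty} F_{\rho y,1}(\Sigma) = \lambda_1$, the last equality holding because $\Sigma$ is asymptotic to a round cylinder in direction $y$. For the matching lower bound, the presence of a cylindrical end of $\Sigma_i$ in direction $y_i$ together with Huisken's monotonicity yields $F_{\rho_i y_i, 1}(\Sigma_i) \geq \lambda_1$, which passes to $F_{0,1}(\nu) \geq \lambda_1$ via Proposition \ref{prop:F continuous}. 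Hence $\lambda(\nu) = F_{0,1}(\nu) = \lambda_1$. Because each $\Sigma_i$ is $F$-stationary in $\mathbb R^3$, so is $\nu$. Repeating the construction with $\rho_i$ replaced by $\rho_i + \tau$ for each $\tau \in \mathbb R$ and using a diagonal extraction identifies the new limit with $\nu - \tau y$, so $\nu$ is invariant under the translation subgroup $\mathbb R_y$ and splits off this line. Its $P(y)$-slice is then an $F$-stationary integer one-varifold in $\mathbb R^2$ of entropy $\lambda_1$, which forces it to be a single round circle of radius $\sqrt 2$. Thus $\nu$ is a multiplicity-one round cylinder, and Brakke's regularity theorem (Theorem \ref{thm:Brakke regularity}) upgrades the Radon convergence of $\Sigma_i - \rho_i y_i$ to $C^\infty_{loc}$ convergence.

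To read off the topology of $\Gamma_i := \Sigma_i \cap \mathcal C_{U,R}$, I first use the local smooth convergence $\Sigma_i \to \Sigma$ to see that for all large $i$ the slice $\Sigma_i \cap \partial B_R \cap \mathcal C_{U,R}$ is a single simple closed curve $C^\infty$-close to $\partial \Gamma$. The cylinder limit of the previous step, applied to each basepoint $\rho_i y_i$ with $\rho_i \in [R,\infty)$, provides a uniform graphical description: on slabs of fixed thickness centered at these basepoints, $\Sigma_i$ is a small normal graph over the round cylinder. Patching the overlapping graphical pieces along the cylindrical axis shows that the component of $\Gamma_i$ containing this tube is an embedded annulus, and together with the unique boundary circle on $\partial B_R$ this forces $\Gamma_i \cong S^1 \times [0,\infty) \cong \Gamma$.

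The main obstacle is the splitting step that identifies $\nu$ as a round cylinder: a priori the Radon limit could carry higher multiplicity or could be a non-cylindrical $F$-stationary varifold, and only the combination of the sharp entropy equality $\lambda(\nu) = \lambda_1$ with the translation invariance obtained by varying $\tau$ rules this out. A secondary technical point is ensuring that the graphical estimates from Brakke regularity are uniform as $\rho_i$ sweeps over $[R,\infty)$, so that the local annular pieces actually assemble into one global annulus rather than producing extra components, and that no piece of $\Sigma_i$ extends to infinity along a direction other than $y_i$ within $\mathcal C_{U,R}$ (which is where Lemma \ref{lem:end curve hausdorff converges} enters).
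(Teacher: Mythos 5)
Your overall scheme parallels the paper's proof: show that $\Sigma_i - \rho_i y_i$ converges smoothly to a multiplicity-one round cylinder for every $\rho_i \to \infty$, then read off the topology of $\Sigma_i \cap \mathcal C_{U,R}$ by sliding the basepoint. However, the step identifying the limit $\nu$ as $F$-stationary is unjustified. You write ``Because each $\Sigma_i$ is $F$-stationary in $\mathbb R^3$, so is $\nu$,'' but $\nu$ is the varifold limit of the translates $\Sigma_i - \rho_i y_i$, and translation does not preserve $F$-stationarity: at $x \in \Sigma_i - \rho_i y_i$ one has $H(x) = \frac{1}{2}\langle x + \rho_i y_i, \mathbf n\rangle$, differing from the shrinker equation by $\frac{\rho_i}{2}\langle y_i, \mathbf n\rangle$, a term that blows up unless $\langle y_i, \mathbf n\rangle$ is forced to vanish --- which is exactly what you are trying to prove, not something you may assume. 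Without $F$-stationarity of $\nu$, the subsequent splitting argument comparing $\nu$ with $\nu - \tau y$ also collapses, since it compares the $F$-stationary equations of both varifolds. The paper closes this gap by embedding the translates into a Brakke flow $\mu_t^i = \mathcal H^2 \lfloor (\sqrt{-t}\,\Sigma_i - \rho_i y_i)$, computing that $\int \Phi(x,t)\,d\nu_t = \lambda_1$ is constant in $t$ after passing to the limit, and then invoking the monotonicity formula for Brakke flows (Lemma \ref{Brakke monotonicity}), whose vanishing deficit is precisely the self-shrinker equation for $\nu_{-1}$.

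A secondary omission: Lemma \ref{lem:end curve hausdorff converges} only confines $\Sigma_i \setminus B_R(0)$ to the conical sector $\mathcal C_{U,R}$; it does not prevent pieces of $\Sigma_i$ from wandering far from the cylinder axis at large radius. The paper proves explicitly (its Step 3) that $\Sigma_i \cap \mathcal C_{U,R}$ lies inside the tubular neighborhood $N_i = \bigcup_{\rho \geq R} B_2^{\rho y_i}(\rho y_i)$ via a separate contradiction argument using the Step-2 cylinder limit, and only then runs the topological assembly (its Step 4). Your appeal to ``uniform graphical estimates'' and ``patching annular pieces'' would need to be replaced by this two-step argument to be rigorous.
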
 
\begin{proof}
Let us first sketch the idea. We want to show that $\Sigma_i\cap\mathcal C_{U,R}$ is topologically a cylinder by contradiction. Suppose $\Sigma_i\cap\mathcal C_{U,R}$ is not topologically a cylinder near the point $p_i$, and the projection of $p_i$ to $y_i$ is $\rho_iy_i$. Then we can translate $\Sigma_i$ by the vector $-\rho_iy_i$, in other words we consider the sequence $\Sigma_i-\rho_iy_i$. We will show that $\Sigma_i-\rho_iy_i$ converges to a cylinder smoothly. As a result, $\Sigma_i$ should be topologically a cylinder near $p_i$, which is a contradiction.

The proof is divided into four steps.

\vspace{1em}
{\noindent\bf Step 1:}
{\em For any sequence $\rho_i\rightarrow\infty$, $F_{\rho_iy_i,1}(\Sigma_i)\rightarrow \lambda_1$.}

\begin{proof}[Proof of Step 1]
On one hand,
\[\liminf_{i\rightarrow\infty}F_{\rho_iy_i,1}(\Sigma_i)
\geq\liminf_{i\rightarrow\infty}\liminf_{\rho\rightarrow\infty}F_{\rho y_i,1}(\Sigma_i)
=\lambda_1.\]
On the other hand, by Proposition \ref{prop:entropy upper bound} (taking $a_i\equiv 1$,  $x=0$, $t=1$),
\[\limsup_{i\rightarrow\infty} F_{\rho_iy_i,1}(\Sigma_i)
\leq \limsup_{\rho\rightarrow\infty}F_{\rho y,1}(\Sigma)
= \lambda_1.\]
Then the desired result follows.
\end{proof}

{\noindent\bf Step 2:}
{\em For any sequence $\rho_i\rightarrow\infty$, $\Sigma_i-\rho_iy_i$ locally smoothly converges to $S^1(\sqrt 2)\times\mathbb R_y$ up to a subsequence.}
 
\begin{proof}[Proof of Step 2]
Define the Brakke flow for each $i$ and $t<0$,
\[ \mu_t^i=\mathcal H^2\lfloor\big(\sqrt{-t}\Sigma_i-\rho_iy_i\big).\]

By the compactness theorem of Brakke flow, there exists a Brakke flow $\{\nu_t\}_{t<0}$ such that for each $t<0$, $\mu_t^i\rightarrow \nu_t$ as varifolds. In the following, we show that the support of $\nu_{-1}$ is a self-shrinker.

By the monotonicity of $F$-functional, $\lambda(\nu_t)\leq \lambda_1$. Meanwhile, for any $t<0$,
\begin{align*}
\int \Phi(x,t)d\nu_t &=\lim_{i\rightarrow\infty}\int \Phi(x,t)d\mu_t^i\\
                     &=\lim_{i\rightarrow\infty}\int_{\sqrt{-t}\Sigma_i-\rho_iy_i}\Phi(x,t)dx\\
                     &=\lim_{i\rightarrow\infty}\int_{\Sigma_i-\frac{\rho_i}{\sqrt{-t}}y_i}\Phi(x,-1)dx\\
                     &=\lim_{i\rightarrow\infty}F_{\frac{\rho_i}{\sqrt{-t}}y_i,1}(\Sigma_i)\\
                     &=\lambda_1.
\end{align*}
By the monotonicity formula for Brakke flows, $\nu_{-1}$ is $F$-stationary, i.e.
\[\overrightarrow{H}+\frac{S(x)^\perp\cdot x}{2}=0,\ \ \ \nu_{-1}-a.e. \ x.\]
Here $S(x)^{\perp}$ is the projection operator, sending $x$ to the normal direction of $x$. In other words, $\nu_{-1}$ is a self-shrinker in the sense of varifolds. Now we show that $\nu_{-1}$ is supporting on a smooth self-shrinker.

For $\tau\in \mathbb R$, $\rho_i+\tau\rightarrow+\infty $ as $\rho_i\rightarrow+\infty$. Then $\nu_{-1}-\tau y$, which is the limit of $\mu_{-1}^i-\tau y=\mathcal H^2\lfloor\big(\Sigma_i-(\rho_i+\tau)y_i\big)$, is also $F$-stationary. Thus $S(x)^{\perp}\cdot y=0$, which implies that $\nu_{-1}$ splits off a line (c.f. \cite{Zhu16} Lemma 1.10). Recall that $F_{0,1}(\nu_{-1})=\lambda_1$. We conclude that $\nu_{-1}=\mathcal H^2\lfloor S^{1}(\sqrt 2)\times \mathbb R_y$.

By the local regularity of Brakke flows \cite{Whi02} (see Theorem \ref{thm:Brakke regularity}), $\Sigma_i-\rho_iy_i$ locally smoothly converges to $S^1(\sqrt 2)\times\mathbb R_y$.
\end{proof}

{\noindent\bf Step 3:}
{\em For sufficiently large $i$, $\Sigma_i\cap\mathcal C_{U,R}\subseteq N_i$, where $N_i=\bigcup_{\rho=R}^{+\infty}B_2^{\rho y_i}(\rho y_i)$.}

Here $N_i$ is a part of the solid cylinder with direction $y_i$ and radius $2$.
\begin{proof}[Proof of Step 3]
We prove by contradiction. Assume on the contrary that there is a subsequence of $\{\Sigma_i\}$ (still denoted by $\{\Sigma_i\}$) such that $\Sigma_i\cap\mathcal C_{U,R}$ does not completely lie in $N_i$. Then for each $i$, $\Sigma_i\cap\bigcup_{\rho=R}^{+\infty}\partial B_2^{\rho y_i}(\rho y_i)\neq\emptyset$. Take $p_i$ in this set and let $\rho_i=\langle p_i,y_i\rangle$ (see Figure \ref{fig:asymptotic to a cylinder}). 

\begin{figure}[h]
\begin{center}
\def\svgwidth{0.8\columnwidth}
  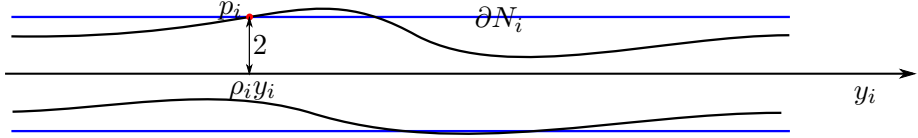
  \caption{Asymptotic to a cylinder.}
  \label{fig:asymptotic to a cylinder}
\end{center}
\end{figure}

If $\rho_i$ is bounded, take a subsequence of $p_i$ (still denoted by $p_i$) such that $\lim\rho_i=\rho_\infty$. Then $\Sigma_i-\rho_iy_i$ locally smoothly converges to $\Sigma-\rho_\infty y$, which is also close to $S^1(\sqrt 2)\times \mb R_y$. Hence for $i$ sufficiently large,  $\Sigma_i\cap \bigcup_{\rho=\rho_i-1}^{\rho_i+1}\partial B_2^{\rho y_i}(\rho y_i)=\emptyset$. This is a contradiction.

If $\rho_i$ is unbounded, take a subsequence of $p_i$ (still denoted by $p_i$) such that $\lim\rho_i=+\infty$. Step 2 gives that $\Sigma_i-\rho_iy_i$ locally smoothly converges to $S^1(\sqrt{2})\times \R_y$ up to a subsequence (still denoted by $\{\Sigma_i\}$). However, $p_i\in \partial B_2^{\rho_iy_i}(\rho_iy_i)$ implies that $\Sigma_i-\rho_iy_i$ can not converge to $S^1(\sqrt{2})\times \R_y$ locally smoothly. This is a contradiction.

Therefore, $\Sigma_i\cap\mathcal C_{U,R}\subseteq N_i$ for large $i$.
\end{proof}

Now we can prove our main theorem in this subsection by contradiction. Suppose on the contrary that there is a subsequence of $\Sigma_i\cap\mathcal C_{U,R}$ which are not diffeomorphic to $\Sigma\cap\mathcal C_{U,R}$ (still denoted by $\Sigma_i$). Then we can take $r_i\rightarrow\infty$ so that $\Sigma_i$ is not diffeomorphic to $S^1\times [0,1]$ near $r_iy_i$. Applying the result in Step 2 gives a contradiction. We present the details below.

\vspace{1em}
{\noindent\bf Step 4:}
{\em $\Sigma_i\cap\mathcal C_{U,R}$ is diffeomorphic to $\Sigma\cap \mathcal C_{U,R}$.}
\begin{proof}[Proof of Step 4]
By Step 3, it suffices to show that there exists $I>0$ such that for any $i>I$ and $r>R$, $\Sigma_i\cap\bigcup_{\rho=r}^{r+1}B_2^{\rho y_i}(\rho y_i)$ is diffeomorphic to $S^1\times [0,1]$.

Assume the statement is not true. Then for each $i$, take $r_i>R$ such that $\Sigma_i\cap\bigcup_{\rho=r_i}^{r_i+1}B_2^{\rho y_i}(\rho y_i)$ is not diffeomorphic to $S^1\times [0,1]$. 

If $r_i$ is bounded, take a subsequence of $r_i$ (still denoted by $r_i$) such that $\lim r_i=r$. Then $\Sigma_i-r_iy_i$ locally smoothly converges to $\Sigma-r y$, which is a contradiction.

If $r_i$ is unbounded, take a subsequence of $r_i$ (still denoted by $r_i$) such that $\lim r_i=+\infty$. Because the topological type is different, $\Sigma_i-r_iy_i$ can not locally smoothly converges to  $S^1(\sqrt 2)\times \mathbb R_y$, which contradicts the conclusion in Step 2.
\end{proof}
Combining all the steps above shows the proposition.
\end{proof}

\vspace{0.8em}
\subsection{$\Sigma_i\cap\mathcal C_U$ contains a conical end}
Now we consider that $\lim_{\rho\rightarrow\infty}\rho^{-1}\Sigma_i\cap U$ contains a connected component which is a simple closed curve. 

Let $\gamma_i$ be a connected component of $\lim_{\rho\rightarrow\infty}\rho^{-1}\Sigma_i\cap U$. By Lemma \ref{lem:end curve hausdorff converges}, $\gamma_i\rightarrow y$ in Hausdorff distance. Denote by $\widetilde \gamma_i$ the curve $\mathcal C_{\gamma_i}\cap B_1^{y}(y)$. In the following of this section, $\wti\gamma_i$ is also regarded as a curve in $\mb R^2 (=P(y))$.

Before we state our main proposition, we need some Lemmas. We want to show that $\{\widetilde\gamma_i\}$ is a entropy minimizing sequence among all close plane curves.

\begin{lemma}\label{Lem: entropy of cone gamma equal to entropy of gamma taking limit}
\[\lim_{i\to\infty}\lambda(\widetilde\gamma_i)=\lim_{i\to\infty}\lambda(\mathcal{C}_{\widetilde\gamma_i})=\lambda_1.\]
\end{lemma}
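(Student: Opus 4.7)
The plan is to prove matching upper and lower bounds of $\lambda_1$ for each of the two limits, combining Step 1 of Proposition \ref{prop:cylinder end} (which gives $\lim_{\rho\to\infty}F_{\rho y,1}(\Sigma)=\lambda_1$ since $\Sigma$ has a cylindrical end in direction $y$) with the entropy upper bound in Proposition \ref{prop:entropy upper bound}.

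For the lower bound, any simple closed curve $C$ in the plane has entropy at least $\lambda_1$: by Grayson's theorem the curve shortening flow starting from $C$ becomes asymptotically a round circle, and since the $F$-functionals are monotone along the flow, $\lambda(C)\geq\lambda(\text{round circle})=\lambda_1$. Applying this to $\widetilde\gamma_i$ (and to the curve $\overline\gamma$ appearing below) gives the lower bound.

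For the upper bound on $\lambda(\widetilde\gamma_i)$, the strategy is a blow-up along the axis. Let $\epsilon_i:=\max_{z\in\widetilde\gamma_i}|z-y|$, which tends to zero by Lemma \ref{lem:end curve hausdorff converges}, and set $\rho_i:=\epsilon_i^{-1}\to\infty$. Apply Proposition \ref{prop:entropy upper bound} with $y_i=y$, $a_i=1$, and this $\rho_i$; the hypothesis $a_i\rho_i\to\infty$ is satisfied. Because $\Sigma_i$ is asymptotic at infinity to the cone $\mc C_{\widetilde\gamma_i}$, the translate $\mc C_{\widetilde\gamma_i}-\rho_i y$, restricted to any bounded neighborhood of the origin, is $C^\infty$-close to the cylinder over the curve $\rho_i(\widetilde\gamma_i-y)$ in the plane through the origin perpendicular to $y$. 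Arguing exactly as in Step 2 of Proposition \ref{prop:cylinder end}, the Brakke limit $\mu$ of $\Sigma_i-\rho_i y$ is $F$-stationary and, by repeating the argument after additional translations along $y$, invariant under translation by $\R_y$; hence $\mu=\mathcal H^2\lfloor(\overline\gamma\times\R_y)$ for some closed curve $\overline\gamma:=\lim\rho_i(\widetilde\gamma_i-y)$, which by Brakke regularity (Theorem \ref{thm:Brakke regularity}) is a smooth embedded curve, and the convergence is locally smooth. Proposition \ref{prop:entropy upper bound} then yields $\lambda(\overline\gamma)=\lambda(\overline\gamma\times\R_y)\leq\lambda_1$, and combined with the lower bound one gets $\lambda(\overline\gamma)=\lambda_1$. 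Since entropy is scale and translation invariant, $\lambda(\widetilde\gamma_i)=\lambda(\rho_i(\widetilde\gamma_i-y))$, and smooth convergence of these rescaled curves gives $\lambda(\widetilde\gamma_i)\to\lambda(\overline\gamma)=\lambda_1$.

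For the cone statement, the blow-up above shows that $\mc C_{\widetilde\gamma_i}$, after rescaling by $\rho_i$ and translating by $-\rho_i y$, is asymptotically close to the cylinder $\overline\gamma\times\R_y$, which has entropy $\lambda_1$; a direct $F$-functional computation on the thin (asymptotically round) cone $\mc C_{\widetilde\gamma_i}$, maximizing in $(\rho,t_0)$, confirms that $\lim_i\lambda(\mc C_{\widetilde\gamma_i})=\lambda_1$. The main obstacle is making the identification $\mu=\overline\gamma\times\R_y$ precise: one must verify that the Brakke limit has no collapse or extra multiplicity, which is handled by the $F$-stationarity together with the splitting argument as in Step 2 of Proposition \ref{prop:cylinder end}.
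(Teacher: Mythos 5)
Your approach diverges substantially from the paper's, and unfortunately introduces several gaps. The paper's proof is purely a monotonicity-formula comparison: it picks the entropy-achieving pair $(s_iy_i,1)$ for $\mc C_{\widetilde\gamma_i}$ (with $s_i\to\infty$ since $\gamma_i$ shrinks to a point), then runs the chain
$\lambda_1=\lim_\rho F_{\rho y,1}(\Sigma)\geq \lim_i F_{\rho_iy_i,1+s_i^{-2}\rho_i^2}(\Sigma_i)\geq\lim_i\lambda(\mc C_{\widetilde\gamma_i})$
using Proposition \ref{prop:entropy upper bound}, and then for the other direction picks $(x_i,a_i)$ achieving $\lambda(\widetilde\gamma_i)$ (with $a_i\to0$) and observes that $a_i^{-1}(\mc C_{\widetilde\gamma_i}-x_i)$ and $a_i^{-1}(\widetilde\gamma_i-x_i)\times\R$ are varifold-close on compact sets, so $\lambda(\mc C_{\widetilde\gamma_i})\geq\lambda(\widetilde\gamma_i)-o(1)$. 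No Brakke limit ever needs to be identified.

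Your strategy instead passes to a Brakke limit of $\Sigma_i-\rho_iy$ with $\rho_i=\epsilon_i^{-1}$ and tries to name the limit $\overline\gamma\times\R_y$. There are three concrete problems. First, your choice of center $y$ and scale $\epsilon_i^{-1}$ (max distance of $\widetilde\gamma_i$ from $y$) is not the scale at which the curves become round; the natural normalization is the one from Lemma \ref{Lem:2sqrt2gamma i converges to S1} (round-point/collapse-time of curve shortening flow), which the paper derives \emph{from} the present lemma. With your normalization $\rho_i(\widetilde\gamma_i-y)$ could degenerate (collapse to a segment or escape to the boundary of $B_1$), and nothing you have written rules this out. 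Second, even granting $F$-stationarity and the line-splitting, the 1-dimensional $F$-stationary factor with entropy $\leq\lambda_1$ is \emph{either} a circle \emph{or} a line; you silently assume it is closed, but the plane case would give $\lambda(\overline\gamma)=1$ and break your argument. Third, the final step "smooth convergence of the rescaled curves gives $\lambda(\widetilde\gamma_i)\to\lambda(\overline\gamma)$" has two hidden assumptions: you have not established any curvature bound for $\rho_i(\widetilde\gamma_i-y)$ (so there is no reason the convergence is smooth), and even with locally smooth convergence entropy is in general only lower semicontinuous, not continuous; passing from $\sup_{x,t}F_{x,t}$ of the approximants to that of the limit requires a compactness argument for the maximizing $(x,t)$. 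The paper avoids all of this by never identifying a limit object and instead estimating $F$-functionals directly at the chosen scales.

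Your use of Grayson's theorem for the lower bound $\lambda(\widetilde\gamma_i)\geq\lambda_1$ is fine and matches the (implicit) argument in the paper. The cone statement at the end is essentially hand-waved ("a direct $F$-functional computation … confirms"); in the paper this is precisely the first half of the proof, done rigorously with Proposition \ref{prop:entropy upper bound} and monotonicity. Overall: the strategy is not merely longer, it is not yet correct as written.
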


\begin{proof}
On one hand, take $s_i>0$ and $y_i\in S^2(1)$ such that
\[\lambda(\mathcal{C}_{\widetilde\gamma_i})=F_{s_iy_i,1}(\mathcal{C}_{\widetilde\gamma_i}).\]
$\gamma_i$ converges to a point implies that $s_i\rightarrow \infty$. Then Proposition \ref{prop:entropy upper bound} and the monotonicity formula imply that for an increasing to infinity sequence $\rho_i$,
\[
\begin{split}
\lambda_1 &=\lim_{\rho\rightarrow\infty}F_{\rho y,1}(\Sigma)
\geq 
\lim_{i\to\infty}F_{\rho_i y_i,1+s_i^{-2}\rho_i^2}(\Sigma_i)\\
&\geq 
\lim_{i\to\infty}\lim_{\rho\to\infty}F_{\rho y_i,1+s_i^{-2}\rho^2}(\Sigma_i)\\
&= \lim_{i\to\infty} F_{0,1}(\mathcal{C}_{\widetilde\gamma_i}-s_iy_i)=\lim_{i\to\infty}\lambda(\mathcal{C}_{\widetilde\gamma_i}).
\end{split}
\]
Therefore, 
\begin{equation}\label{eq:cone less than cylinder}
\lim_{i\to\infty}\lambda(\mathcal{C}_{\widetilde\gamma_i})\leq \lambda_1\leq \lim_{i\to\infty}\lambda(\widetilde\gamma_i).
\end{equation}

On the other hand, take $x_i\in B_1^{y_i}(y_i)$ and $a_i\in(0,\infty)$ such that 
\[\lambda(\widetilde\gamma_i)=F_{0,1}(a_i^{-1}(\widetilde\gamma_i-x_i)).\]
Then $a_i$ must converge to $0$ since the diameter of $\widetilde\gamma_i$ converges to $0$. Then in any compact domain, the varifold distance between $a_i^{-1}(\mathcal{C}_{\widetilde\gamma_i}-x_i)$ and $a_i^{-1}(\widetilde\gamma_i-x_i)\times\mb R$ goes to zero. Since the entropy of $\Sigma_i$ are uniformly bounded, so does the entropy of $\mathcal{C}_{\widetilde\gamma_i}$ and $\widetilde\gamma_i$. Therefore we conclude that
\[
|F_{0,1}(a_i^{-1}(\mathcal{C}_{\widetilde\gamma_i}-x_i))-\lambda(\widetilde\gamma_i))|=|F_{0,1}(a_i^{-1}(\mathcal{C}_{\widetilde\gamma_i}-x_i))-F_{0,1}(a_i^{-1}(\widetilde\gamma_i-x_i)\times\R)|\rightarrow 0.
\]
This deduces
\[\lim_{i\to\infty}\lambda(\mathcal{C}_{\widetilde\gamma_i})\geq \lim_{i\to\infty}\lambda(\widetilde\gamma_i).\]
Together with \eqref{eq:cone less than cylinder}, we get the desired equality.
\end{proof}

With the entropy minimizing property, we prove that after normalizing, these curves converges to a round circle in the sense of varifolds. The argument here is the one-dimensional case of \cite{BW18}.
\begin{lemma}\label{Lem:2sqrt2gamma i converges to S1}
There exist $x_i\in \mb R^2$ and $T_i>0$ so that $T_i^{-1/2}(\wti\gamma_i-x_i)$ converges to $S^{1}(\sqrt 2)$ in the sense of varifolds.
\end{lemma}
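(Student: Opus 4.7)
The plan is a one-dimensional entropy-stability argument in the spirit of \cite{BW18}. Since $\lambda(\wti\gamma_i) \to \lambda_1$ by Lemma \ref{Lem: entropy of cone gamma equal to entropy of gamma taking limit} and $\lambda_1$ is the least entropy among closed smooth curves in $\R^2$ (attained uniquely by $S^1(\sqrt 2)$ centered at the origin), a suitably normalized subsequential limit will have to be this round circle. The key is to choose $(x_i,T_i)$ so that the rescaled curves form an entropy-minimizing sequence that can be identified as a self-shrinker in the limit.

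For the normalization, each $\wti\gamma_i$ is a smooth embedded closed curve in $\R^2$, so by Grayson's theorem it evolves under curve shortening flow to a round point at some finite extinction time $T_i>0$ with extinction point $x_i\in\R^2$. Set $\eta_i:=T_i^{-1/2}(\wti\gamma_i-x_i)$. After the parabolic rescaling, $\eta_i$ is the initial curve at time $-1$ of a curve-shortening flow $\{\hat\gamma_i^t\}_{t\in[-1,0)}$ that extinguishes at the origin at time $0$. Since entropy is invariant under translation and dilation, $\lambda(\eta_i)=\lambda(\wti\gamma_i)\to\lambda_1$.

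The main analytic step is to identify the varifold limit of $\eta_i$ as a self-shrinker. By Huisken's monotonicity (Lemma \ref{Brakke monotonicity}) with basepoint $(0,0)$, $t\mapsto F_{0,-t}(\hat\gamma_i^t)$ is non-increasing on $[-1,0)$, and since the extinction is a round point one has $\lim_{t\to 0^-}F_{0,-t}(\hat\gamma_i^t)=\lambda_1$. Hence $F_{0,1}(\eta_i)=F_{0,-(-1)}(\hat\gamma_i^{-1})\geq\lambda_1$, while the bound $F_{0,1}(\eta_i)\leq\lambda(\eta_i)\to\lambda_1$ forces $F_{0,1}(\eta_i)\to\lambda_1$, and therefore $F_{0,-t}(\hat\gamma_i^t)\to\lambda_1$ uniformly on $[-1,0)$. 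Proposition \ref{Proposition: entropy and area growth are equivalent} gives uniform local length bounds, so a subsequence of $\mathcal H^1\lfloor\eta_i$ converges as Radon measures to some $\nu$; and Brakke flow compactness (Lemma \ref{lem:brakke compactness}) lets me pass to a subsequential Brakke flow $\{\mu_t\}_{t\in[-1,0)}$ with $\mu_{-1}=\nu$. Proposition \ref{prop:F continuous} gives $F_{0,-t}(\mu_t)\equiv\lambda_1$, and the equality case of Huisken's monotonicity forces $\mu_t$ to be self-similar at $(0,0)$, so $\nu=\mu_{-1}$ is a self-shrinker in the varifold sense.

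Since the Gaussian density of $\nu$ at any point is bounded by $\lambda(\nu)=\lambda_1<2$, $\nu$ is multiplicity-one integer rectifiable, and combining the shrinker equation with Brakke's regularity theorem (Theorem \ref{thm:Brakke regularity}) shows $\nu$ is supported on a smooth embedded curve. The only embedded closed smooth self-shrinker in $\R^2$ is the circle $S^1(\sqrt 2)$ (the one-dimensional case of \cite{CIMW}), so $\nu=\mathcal H^1\lfloor S^1(\sqrt 2)$ and $T_i^{-1/2}(\wti\gamma_i-x_i)\to S^1(\sqrt 2)$ as varifolds. The principal obstacle is the third paragraph: combining Grayson's asymptotic roundness of the smooth approximants with the equality case of Huisken's monotonicity for the limit Brakke flow to pin down $\nu$ as a self-shrinker, and then using the density bound together with Brakke regularity to rule out potential singularities of $\nu$ before the rigidity classification applies.
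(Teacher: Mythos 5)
Your overall strategy—normalize by the Grayson extinction point/time, pass to a Brakke limit, identify it as a self-shrinker via entropy rigidity, then classify—is the same skeleton as the paper's proof, and the monotonicity bookkeeping $F_{0,1}(\eta_i)\geq\lambda_1\geq\lambda(\eta_i)\to\lambda_1$ is correct and nicely concise. But there is a genuine gap in the regularity step that the paper is at pains to avoid.

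You write that ``combining the shrinker equation with Brakke's regularity theorem shows $\nu$ is supported on a smooth embedded curve,'' but Theorem \ref{thm:Brakke regularity} runs in the opposite direction: it upgrades varifold convergence to smooth convergence \emph{given} that the limit is already a smooth multiplicity-one flow. It does not produce regularity of the limit. And the density bound you invoke does not produce it either: the Gaussian density at a self-shrinking triple junction of three half-lines through the origin is $3/2$, and $\lambda_1=\sqrt{2\pi/e}\approx 1.52>3/2$, so the bound $\lambda(\nu)\leq\lambda_1<2$ is \emph{not} strong enough to rule out triple junctions on its own. This is precisely why the paper does not argue at the varifold level at this stage. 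Instead, the paper first establishes uniform curvature bounds for the rescaled flows $\Gamma_i(t)$ on each compact time interval inside $(-1,0)$, by adapting the blow-up argument of \cite{BW18} with Proposition \ref{prop:smooth for all} replacing \cite{BW18}*{Proposition 3.2}; this forces the Brakke limit $\nu_t$ to be a \emph{smooth} curve before any classification is attempted, and only then is the entropy identity $\lambda(\nu_t)=\lambda_1$ plus the singular point at $(0,0)$ used to conclude it is the round shrinking circle. (Note also that the paper, when it does need to rule out junctions for \emph{stationary} limits inside Proposition \ref{prop:smooth for all}, cites \cite{Whi09} in addition to the entropy bound — this extra input is what your argument is implicitly missing.)

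So the conclusion and much of the setup agree, but your proof as written does not close: you must either import the curvature-bound step (Proposition \ref{prop:smooth for all} plus the \cite{BW18} blow-up) to get smoothness of the limit directly, or else replace the appeal to Brakke regularity with a genuine structure/regularity theorem for one-dimensional $F$-stationary integral varifolds that excludes triple junctions in this entropy range — and the bare inequality $\lambda_1<2$ will not suffice for the latter.
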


\begin{proof}
Let $\{\gamma_i(t)\}_{t\in[0,T_i)}$ be the maximal smooth curve shortening flow with $\gamma_i(0)=\gamma_i$. By the entropy bound and \citelist{\cite{GH86}\cite{Grayson}}, the first singularity of this flow is at $(x_i,T_i)$, where the flow disappears in a round point. Now let $\Gamma_i(t)=T_i^{-1/2}(\wti\gamma_i((t+1)T_i)-x_i)$ for $t\in[-1,0)$. We can follow the argument in \cite{BW18} by replacing \cite{BW18}*{Proposition 3.2} with Proposition \ref{prop:smooth for all} to prove that $\Gamma_i(t)$ actually have uniformly curvature bound for each fixed $t\in(-1,0)$. 

By Brakke's compactness theorem, up to a subsequence $\{\Gamma_i(t)\}_{t\in[-1,0)}$ converges to a Brakke flow $\{\nu_t\}_{t\in[-1,0)}$ in the sense of varifolds. By the uniform curvature bounds for each fixed $t\in(-1,0)$, $\nu_t$ is a smooth curve. Clearly, $\lambda(\nu_t)=\lambda_1$ and, by the upper semi-continuous of Gaussian density, this flow becomes singular at $(0,0)$. Hence $\nu_{-1/2}$ must be a smooth circle with radius $1$, which implies that $\nu_{-1}$ is a round circle with radius $\sqrt 2$.
\end{proof}

\begin{proposition}\label{prop:smooth for all}
Suppose $\{\Gamma(t)\}_{t\in\R}$ is an eternal curve shortening flow in the plane $\R^2$ with $\sup_{t\in\mb R}\lambda(\Gamma(t))<2$. Then $\Gamma(t)$ is a straight line for any $t\in\R$.
\end{proposition}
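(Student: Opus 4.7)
The plan is to take a tangent flow at $-\infty$, force it to be a multiplicity-one straight line using the entropy bound $<2$ together with the eternality, and then invoke the rigidity case of Huisken's monotonicity formula to conclude $\Gamma(\tau)$ is a static line.

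First I would perform a blow-down. After translating so that $0\in\Gamma(0)$, for each $\lambda>0$ the rescaled flow $\Gamma_{\lambda}(s):=\lambda\,\Gamma(\lambda^{-2}s)$ is again an eternal curve-shortening flow, whose entropy equals $\sup_{t}\lambda(\Gamma(t))<2$ (entropy is scale invariant). By Brakke's compactness theorem (Lemma~\ref{lem:brakke compactness}), a subsequence $\lambda_{i}\to 0$ gives a Brakke-flow limit $\{\nu_{s}\}_{s\in\mathbb R}$. Huisken's monotonicity shows that $F_{0,-t}(\Gamma(t))$ is non-increasing on $t<0$; smoothness of $\Gamma$ at $(0,0)$ and the entropy bound give $1\leq F_{0,-t}(\Gamma(t))<2$, so $L:=\lim_{t\to-\infty}F_{0,-t}(\Gamma(t))$ lies in $[1,2)$. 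Using Proposition~\ref{prop:F continuous} together with the identity $F_{0,-s}(\Gamma_{\lambda}(s))=F_{0,-\lambda^{-2}s}(\Gamma(\lambda^{-2}s))$, I would obtain $F_{0,-s}(\nu_{s})\equiv L$ for all $s<0$, and the equality case of the Brakke monotonicity formula (Lemma~\ref{Brakke monotonicity}) then forces $\nu_{-1}$ to be an $F$-stationary varifold, i.e.\ a planar self-shrinker of entropy at most $L$.

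Next I would classify $\nu_{-1}$. The planar self-shrinkers are straight lines, round circles, and Abresch--Langer curves. Since a multiplicity-$k$ line has entropy exactly $k$, the bound $L<2$ forces $k=1$; and the identity $F_{0,1}(\ell)=e^{-c/4}$ (where $c$ is the squared distance from $\ell$ to the origin) combined with $L\geq 1$ then forces $c=0$ and $L=1$. The compact alternatives are ruled out by eternality: if $\nu_{-1}$ were compact, then for $i$ large $\Gamma_{\lambda_{i}}(-1)$ would contain a nearly compact simple closed component, so $\Gamma(-\lambda_{i}^{-2})$ would contain a simple closed component enclosing area $\sim\lambda_{i}^{-2}$, which by the Gauss--Bonnet area-decay rate $2\pi$ under curve-shortening flow would shrink to a singular point in time $\sim\lambda_{i}^{-2}$---contradicting smoothness of $\Gamma$ at all times.

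Finally I would carry out the rigidity step. Repeating the preceding argument with $(0,0)$ replaced by an arbitrary smooth spacetime point $(x_{0},t_{0})$, $x_{0}\in\Gamma(t_{0})$, yields $\lim_{\tau\to-\infty}F_{x_{0},t_{0}-\tau}(\Gamma(\tau))=1$. Combined with the smooth-point limit $\lim_{\tau\to t_{0}^{-}}F_{x_{0},t_{0}-\tau}(\Gamma(\tau))=1$ and the monotonicity of this quantity in $\tau$, one concludes $F_{x_{0},t_{0}-\tau}(\Gamma(\tau))\equiv 1$ for $\tau<t_{0}$. The equality case of Huisken's formula then forces $H=(x-x_{0})^{\bot}/[2(t_{0}-\tau)]$ pointwise on $\Gamma(\tau)$, meaning $\Gamma(\tau)$ is a self-shrinker centered at $x_{0}$ with extinction at $t_{0}$. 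Eternality excludes any compact shrinker, so $\Gamma(\tau)$ must be a line through $x_{0}$; varying $x_{0}\in\Gamma(t_{0})$ then forces $\Gamma(\tau)$ to be a single straight line for every $\tau$. I expect the promotion of the Brakke-flow limit to $F$-stationarity (via the equality case of the monotonicity) and this final rigidity application (ruling out compact shrinkers using eternality across all centers $x_0$) to be the two main technical points.
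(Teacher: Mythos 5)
Your proposal follows essentially the same route as the paper: blow down the eternal flow, use constancy of the Huisken density to extract an $F$\nobreakdash-stationary limit, classify it under the entropy bound $<2$ as a line or circle, rule out the circle via eternality (the paper argues via Brakke regularity forcing $\rho^{-1}\Gamma(-\rho^2)$ near a round circle, hence compact and extinguishing in finite time; you use the Gauss--Bonnet area-decay, which amounts to the same obstruction), and then conclude rigidity. The paper's last step is stated more tersely --- ``$\lambda(\Gamma(t))=1$, hence a line'' --- whereas you spell out the equality case of Huisken's formula at every smooth spacetime center; these are equivalent formulations of the same endgame, so there is no substantive divergence.
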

\begin{proof}
We follow the proof of \cite{BW18}*{Proposition 3.2}. Again we consider the blow-down sequence of curve shortening flow $\{\rho^{-1}\Gamma(\rho^{2}t)\}_{t\in\mb R}$. Then a subsequence converges to a self-shrinking Brakke flow $\{\nu_t\}_{t\in\mb R}$, with entropy less than $2$. Moreover, the entorpy bound and \cite{Whi09} suggest that the $\nu_{-1}$ must be a $1$-dimensional $F$-stationary varifold without triple junction. Thus $\nu_{-1}$ can only be either a circle or a straight line. If it is a circle, then the smooth convergence given by Brakke's regularity theorem, $\rho^{-1}\Gamma(-\rho^2)$ is contained in $S^1(2)$ for large $\rho$. However, this contradicts the hypothesis that $\{\Gamma(t)\}$ is a smooth flow for all $t\in\mb R$ and then we must have that $\nu_{-1}$ is a straight line. 

As such, by the monotonicity formula, $\lambda(\Gamma(t))=1$. Then we conclude the proposition.
\end{proof}

Let $x_i$ and $T_i$ be the point and real number in Lemma \ref{Lem:2sqrt2gamma i converges to S1}. Recall that $x_i$ is a point in $\mb R^2=P(y)\subset \mb R^3$. Let $y_i=x_i/|x_i|$ and $\beta_i=T_i^{1/2}/|x_i|$. Note that $T_i^{-1/2}(\wti\gamma_i-x_i)\times \mb R$ converges to $S^1(\sqrt 2)\times \mb R$ in the sense of varifolds. Recall that in any fixed compact domain, the varifold distance between $T_i^{-1/2}(\mathcal{C}_{\widetilde\gamma_i}-x_i)$ and $T_i^{-1/2}(\widetilde\gamma_i-x_i)\times\mb R$ goes to zero. Then we conclude that $\mc C_{\gamma_i}-\beta_i^{-1}y_i$ converges to $S^1(\sqrt 2)\times \mb R$ in the sense of varifolds.

\begin{proposition}\label{prop:cone}
$\Sigma_i\cap\mathcal C_{U,R}$ is diffeomorphic to $\Sigma\cap\mathcal C_{U,R}$.
\end{proposition}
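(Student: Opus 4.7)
The plan is to adapt the four-step strategy of Proposition \ref{prop:cylinder end}, now using the fact (established just before the proposition) that $\mathcal C_{\gamma_i}-\beta_i^{-1}y_i\to S^1(\sqrt 2)\times \mathbb R_y$ in the sense of varifolds. The essential new difficulty relative to the cylindrical case is that $\Sigma_i$ has a conical end in direction $y_i$, so the lower bound $\liminf F_{\rho y_i,1}(\Sigma_i)\geq\lambda_1$ used in Step 1 of Proposition \ref{prop:cylinder end} is no longer supplied by monotonicity; one must instead invoke L.~Wang's theorem (\cite{Wang14}) that $\Sigma_i$ is asymptotic to $\mathcal C_{\gamma_i}$.

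The analogue of Step 1 is to show $\lim_{i\to\infty} F_{\beta_i^{-1}y_i,1}(\Sigma_i)=\lambda_1$. The upper bound is a direct consequence of Proposition \ref{prop:entropy upper bound} with $\rho_i=\beta_i^{-1}$ and $a_i=1$, using $\lim_{\rho\to\infty}F_{\rho y,1}(\Sigma)=\lambda_1$ (since $\Sigma$ has a cylindrical end in direction $y$). For the lower bound, on any fixed compact set $K\subset\mathbb R^3$, the end of $\Sigma_i$ is a small $C^k$-graph over $\mathcal C_{\gamma_i}$ in $K+\beta_i^{-1}y_i$ (which lies far from the origin in $\Sigma_i$'s coordinates as $\beta_i^{-1}\to\infty$), so $(\Sigma_i-\beta_i^{-1}y_i)\cap K$ is varifold-close to $(\mathcal C_{\gamma_i}-\beta_i^{-1}y_i)\cap K$. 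Combined with the given convergence $\mathcal C_{\gamma_i}-\beta_i^{-1}y_i\to S^1(\sqrt 2)\times \mathbb R_y$ (whose $F_{0,1}$ is $\lambda_1$), plus the Gaussian decay controlling the contribution from outside $K$ uniformly in $i$, this yields the lower bound.

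Following Step 2 of Proposition \ref{prop:cylinder end}, consider the Brakke flow $\mu^i_t=\mathcal H^2\lfloor(\sqrt{-t}\Sigma_i-\beta_i^{-1}y_i)$, $t<0$, and extract a subsequential limit $\{\nu_t\}$ via Lemma \ref{lem:brakke compactness}. By the $F$-computation above and Proposition \ref{prop:F continuous}, $\int\Phi(x,t)\,d\nu_t=\lambda_1$ for all $t<0$; the monotonicity formula for Brakke flows then forces $\nu_{-1}$ to be $F$-stationary, and translating further along $\tau y_i$ shows $\nu_{-1}$ splits off the line $\mathbb R_y$. Together with the entropy identity $F_{0,1}(\nu_{-1})=\lambda_1$, this identifies $\nu_{-1}=\mathcal H^2\lfloor(S^1(\sqrt 2)\times \mathbb R_y)$; Brakke's regularity (Theorem \ref{thm:Brakke regularity}) upgrades the convergence to $C^\infty_{loc}$. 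The topological conclusion follows as in Steps 3--4 of Proposition \ref{prop:cylinder end}: a contradiction argument places $\Sigma_i\cap\mathcal C_{U,R}$ inside a solid tube about $\mathbb R_{y_i}$ (whose radius may have to grow with height to accommodate the opening of $\mathcal C_{\gamma_i}$ for heights well beyond $\beta_i^{-1}$), and a slice-by-slice diffeomorphism statement gives $\Sigma_i\cap\mathcal C_{U,R}\cong S^1\times[0,\infty)\cong\Sigma\cap\mathcal C_{U,R}$.

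The main obstacle I anticipate is the lower bound in Step 1: it requires effective $C^k$-control of the asymptotic-graph representation of $\Sigma_i$ over $\mathcal C_{\gamma_i}$ at the varying radius $\beta_i^{-1}\to\infty$, i.e., a uniform-in-$i$ version of L.~Wang's asymptotic. A related subtlety lies in extending the smooth convergence in Step 2 to scales $\rho_i\gg\beta_i^{-1}$, where the cone's cross-section is no longer $\sqrt 2$-sized; this calls for a second, scale-dependent rescaling compatible with the one mapping $\mathcal C_{\gamma_i}$ to $S^1(\sqrt 2)\times\mathbb R_y$, and is needed to rule out bad topology occurring far out along $\Sigma_i$.
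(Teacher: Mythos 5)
The key new difficulty you flagged at the end — how to get the lower bound in Step~1 without a uniform-in-$i$ version of L.~Wang's asymptotic graph estimate — is a genuine gap in your argument, but the paper resolves it without any such uniform estimate. Instead of trying to show $(\Sigma_i-\beta_i^{-1}y_i)\cap K$ is varifold-close to $(\mathcal C_{\gamma_i}-\beta_i^{-1}y_i)\cap K$ directly, the paper uses the two-parameter monotonicity of the $F$-functional (\S\ref{subsection:F functional}): for each fixed $i$, $s\mapsto F_{sy_i,\,1+\beta_i^2 s^2}(\Sigma_i)$ is monotone decreasing, and its limit as $s\to\infty$ equals $F_{0,1}(\mathcal C_{\gamma_i}-\beta_i^{-1}y_i)$, since $(s\beta_i)^{-1}\Sigma_i\to\mathcal C_{\gamma_i}$ as $s\to\infty$ for each \emph{fixed} $i$ by L.~Wang's multiplicity-one tangent cone result. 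Only the varifold convergence $\mathcal C_{\gamma_i}-\beta_i^{-1}y_i\to S^1(\sqrt2)\times\R_y$ (which you already have from the normalization via Lemma~\ref{Lem:2sqrt2gamma i converges to S1}) is then needed. This is why the paper works with the family $F_{\rho_iy_i,\,1+\beta_i^2\rho_i^2}$ rather than $F_{\beta_i^{-1}y_i,1}$: the $1+\beta_i^2\rho_i^2$ in the time slot is precisely what makes monotonicity applicable with the scaling constant $a=\beta_i^2$.

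The second subtlety you noted — bad topology far out along the end, i.e.\ at scales $\rho_i\beta_i\to\infty$ — is also not handled correctly by your plan. A ``second, scale-dependent rescaling compatible with the one mapping $\mathcal C_{\gamma_i}$ to $S^1(\sqrt2)\times\R_y$'' cannot make $\frac{\Sigma_i-\rho_iy_i}{\sqrt{1+\beta_i^2\rho_i^2}}$ converge to a cylinder in this regime: when $\rho_i\beta_i\to\infty$, the rescaled surface is asymptotically conical near the relevant point rather than cylindrical, and the paper's Step~2 only claims smooth convergence to $S^1(\sqrt2)\times\R_y$ under the hypothesis $\limsup_i\rho_i\beta_i<\infty$. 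The paper handles $\rho_i\beta_i\to\infty$ by a wholly different mechanism in Steps~3--4: in Step~3 one derives a contradiction from the monotonicity of $t\mapsto F_{p,t}(\Sigma_i)$ between the scale of the critical/boundary point (where $F=1$) and the scale $K\beta_i^{-1}$ (where the hole $B_{k/4}(k\beta_i^{-1}z_i)\cap\Sigma_i=\emptyset$ forces $F<1/2$); in Step~4 one blows up at a critical point of $|x|^2$, obtains a Bowl soliton via Theorem~\ref{Theorem: blow up shrinker to get translator}, and then the entropy lower bound of the Bowl soliton contradicts the $F$-upper bound obtained from the cylinder convergence of $\frac{\Sigma_i-k\beta_i^{-1}y_i}{\sqrt{1+k^2}}$. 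This is essentially the conical-end argument of Theorem~\ref{Theorem: Asymptotic to a cone main theorem} grafted onto the far-out part of the end, and it is missing from your proposal.

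In short: your high-level architecture (mimic Proposition~\ref{prop:cylinder end}, translate by $\beta_i^{-1}y_i$, use the normalized-curve convergence $\mathcal C_{\gamma_i}-\beta_i^{-1}y_i\to S^1(\sqrt2)\times\R_y$) is correct near the base scale, but both of the difficulties you identified are real and neither is resolved; the paper's resolutions (monotonicity in the two-parameter $F$-family for Step~1, and a hole/Bowl-soliton contradiction rather than further rescaling for $\rho_i\beta_i\to\infty$) are substantively different from the routes you sketch.
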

\begin{proof}
We use the similar argument as in the proof of Proposition \ref{prop:cylinder end}. However, there is a difference. For the cylindrical ends case, we only need translations to bring the point where the topology of the end is bad, to a neighborhood of the origin to argue by a contradiction; for the conical ends case, we need not only translations but also dilations to bring the point, where the topology of the end is bad, back to a neighborhood of the origin and then argue by contradiction. The proof is also divided into four steps:

\vspace{0.5em}
{\noindent\bf Step 1:}
{\em For any sequence $\rho_i\rightarrow\infty$, we have
\[F_{\rho_iy_i,1+\beta_i^2\rho_i^2}(\Sigma_i)\rightarrow \lambda_1.\]}
\begin{proof}[Proof of Step 1]
By the monotonicity of $F$-functional,
\begin{align*}
\liminf_{i\rightarrow\infty}F_{\rho_iy_i,1+\beta_i^2\rho_i^2}(\Sigma_i)
&\geq\liminf_{i\rightarrow\infty}\liminf_{\rho\rightarrow\infty}F_{\rho y_i,1+\beta_i^2\rho^2}(\Sigma_i)\\
&=\liminf_{i\rightarrow\infty}\liminf_{\rho\rightarrow\infty}F_{0,1}((\rho \beta_i)^{-1}\Sigma_i-\beta_i^{-1}y_i)\\
&=\liminf_{i\rightarrow\infty} F_{0,1}(\mathcal C_{\gamma_i}-\beta_i^{-1}y_i)=\lambda_1.
\end{align*} 
By Proposition \ref{prop:entropy upper bound},
\[
\limsup_{i\rightarrow\infty}F_{\rho_iy_i,1+\beta_i^2\rho_i^2}(\Sigma_i)
\leq \liminf_{\rho\rightarrow\infty} F_{\rho y,1}(\Sigma)=\lambda_1.
\]
Then the desired result follows immediately.

\end{proof}

{\noindent\bf Step 2:}
{\em For any $\rho_i\rightarrow\infty$ with $\lim_{i\rightarrow\infty}\rho_i\beta_i<+\infty$,  $\frac{\Sigma_i-\rho_iy_i}{\sqrt{1+\beta_i^2\rho_i^2}}$ locally smoothly converges to $S^1(\sqrt 2)\times \mathbb R_y$ up to a subsequence.}

\begin{remark}
Intuitively, such a sequence is ``bounded" by $\mc C_{\gamma_i}-\beta_i^{-1}y_i$ and $\Sigma-\rho y$. However, both sequences locally smoothly converge to the round cylinder $S^1(\sqrt 2)\times \mb R_y$.
\end{remark}

\begin{proof}[Proof of Step 2]
For any sequence of $\rho_i\rightarrow \infty$ with $\lim_{i\rightarrow\infty}\rho_i\beta_i<+\infty$, there exists a constant $K>0$ so that $\rho_i\beta_i<K$ for all large $i$. Define the Brakke flow for $t\leq  -K^2/(1+K^2)$ by
\[ \mu_t^i=\mathcal H^2\lfloor\Big(\left(\sqrt{\frac{1}{1+\beta_i^2\rho_i^2}-1-t}\,\right)\Sigma_i-\frac{\rho_i}{\sqrt{1+\beta_i^2\rho_i^2}}y_i\Big).\]
This is the mean curvature flow so that $\mu_{-1}^i=\frac{\Sigma_i-\rho_iy_i}{\sqrt{1+\beta_i^2\rho_i^2}}$. Then by the compactness of Brakke flows (see Lemma \ref{lem:brakke compactness}), $\mu_t^i$ converges to a Brakke flow $\nu_t$ in the sense of Radon measure. We want to show that the support of $\nu_{-1}$ is a self-shrinker.

By the monotonicity of $F$-functional, $\lambda(\nu_t)\leq \lambda_1$. By computation,
\begin{equation}
\mu_t^i=\mathcal H^2\lfloor\Big(\sqrt{-t}\big(\frac{\Sigma_i-\widetilde \rho_iy_i}{\sqrt{1+\beta_i^2\widetilde\rho_i^2}}\big)\Big),
\end{equation}
where $\widetilde \rho_i=\rho_i\Big/\sqrt{1-(1+t)(1+\beta_i^2\rho_i^2)}$. Note that for any $t< -K^2/(1+K^2)$, we have $\widetilde \rho_i\rightarrow \infty$ since $\rho_i\rightarrow\infty$ and $\beta_i\rightarrow 0$. Therefore,
\begin{align*}
\int \Phi(x,t)d\nu_t&=\lim_{i\rightarrow\infty}\int \Phi(x,t)d\mu_t^i\\
&=\lim_{i\rightarrow\infty}F_{\widetilde \rho_iy_i,1+\beta_i^2\widetilde \rho_i^2}(\Sigma_i)\\
&=\lambda_1.
\end{align*}
By the monotonicity formula, $\nu_{-1}$ is $F$-stationary, i.e. 
\[\overrightarrow{H}+\frac{S(x)^\perp\cdot x}{2}=0,\ \ \ \nu_{-1}-a.e.\ x.\]

We now show that $\nu_{-1}$ is supporting on a smooth self-shrinker. For any $\tau\in\mathbb R$, set 
\[\rho'_i=\rho_i+\tau\sqrt{1+\beta_i^2\rho_i^2} \text{\ \ \ and\  \ \ } s_i=\frac{1+\beta_i^2\rho_i'^2}{1+\beta_i^2\rho_i^2}.\]
It follows that $\rho_i'\rightarrow\infty$, $\lim_{i\rightarrow\infty}\beta_i\rho_i'<+\infty$ and $s_i\rightarrow 1$ as $i\rightarrow\infty$. Then $\nu_{-1}-\tau y$, the limit of $\mu_{-1}^i-\tau y_i=\mathcal H^2\lfloor\Big(\sqrt{s_i}\big(\frac{\Sigma_i- \rho'_iy_i}{\sqrt{1+\beta_i^2 \rho_i'^2}}\big)\Big)$, is also $F$-stationary. This shows that $S(x)^\perp\cdot y=0$ for almost $x$. Therefore, $\nu_{-1}$ splits off a line with direction $y$. Note that $\lambda(\nu_{-1})=\lambda_1$. Hence $\nu_{-1}$ supports on $S^1(\sqrt2)\times \mathbb R_y$ with multiplicity $1$. Then by local regularity for Brakke flows (Theorem \ref{thm:Brakke regularity}), we conclude that  $\frac{\Sigma_i-\rho_iy_i}{\sqrt{1+\beta_i^2\rho_i^2}}$ locally smoothly converges to $S^1(\sqrt 2)\times \mathbb R_y$ up to a subsequence.
\end{proof}

{\noindent\bf Step 3:}
{\em For sufficiently large $i$, $\Sigma_i\cap \mathcal{C}_{U,R}\subseteq N_i$, where $N_i=\bigcup_{\rho=0}^{+\infty}B_{2\sqrt{1+\beta_i^2\rho^2}}^{\rho y_i}(\rho y_i)$.}
\begin{proof}[Proof of Step 3]
Assume on the contrary that a subsequence of $\Sigma_i$ is not contained in $N_i$. We still denote the subsequence by $\{\Sigma_i\}$. Then for each $i$,  $\Sigma_i\cap \mathcal{C}_{U,R}\cap\partial N_i$ is non-empty. Take $p_i$ in this intersection and set $\rho_i=\langle p_i,y_i\rangle$. 

\medskip
If $\rho_i$ is bounded, take a subsequence of $p_i$ (still denoted by $p_i$) such that $\lim\rho_i=\rho$. Then 
\[\lim_{i\rightarrow\infty}\Bigg|\frac{p_i-\rho_iy_i}{\sqrt{1+\beta_i^2\rho_i^2}}\Bigg|=\lim_{i\rightarrow\infty}\big|p_i-\rho_iy_i\big|=2.\]
Recall that $\Sigma_i-\rho_iy_i$ locally smoothly converges to a smooth surface. Therefore the limit surface must intersect $\mb R_y\times S^1(2)$. This contradicts the fact that $\Sigma_i-\rho_iy_i$ locally smoothly converges to $\Sigma-\rho y$.

\medskip
If $\rho_i$ is unbounded and $\rho_i\beta_i$ is bounded, take a subsequence of $p_i$ (still denoted by $p_i$) such that $\lim_{i\rightarrow\infty}\rho_i=+\infty$ and $\lim_{i\rightarrow\infty}\beta_i\rho_i<+\infty$. Then $p_i\in\Sigma_i\cap\partial N_i$ also implies that
\[\lim_{i\rightarrow\infty}\Bigg|\frac{p_i-\rho_iy_i}{\sqrt{1+\beta_i^2\rho_i^2}}\Bigg|=2,\]
which contradicts that $\frac{\Sigma_i-\rho_iy_i}{\sqrt{1+\beta_i^2\rho_i^2}}$ locally smoothly converges to $S^1(\sqrt 2)\times \mathbb R_y$ by Step 2.

\medskip
It remains to consider $\rho_i\beta_i\rightarrow\infty$ (see Figure \ref{fig:small cones}). Denote by $z_i=p_i/\rho_i$. Then for any $k>0$,
\begin{align*}
\lim_{i\rightarrow\infty}\frac{\dist(k\beta_i^{-1}z_i,\mb R_{y_i})}{\sqrt{1+k^2}}&=\lim_{i\rightarrow\infty}\frac{\dist(\rho_iz_i,\mb R_{y_i})}{\sqrt{1+k^2}}\cdot \frac{k\beta_i^{-1}}{\rho_i}\\
&=\lim_{i\rightarrow\infty}\frac{\dist(p_i,\mb R_{y_i})}{\sqrt{1+\rho_i^2\beta_i^2}}\cdot\frac{\sqrt{1+\rho_i^2\beta_i^2}}{\sqrt{1+k^2}}\cdot \frac{k}{\rho_i\beta_i}\\
&=\frac{2k}{\sqrt{1+k^2}}.
\end{align*}

\begin{figure}[h]
\begin{center}
\def\svgwidth{0.8\columnwidth}
  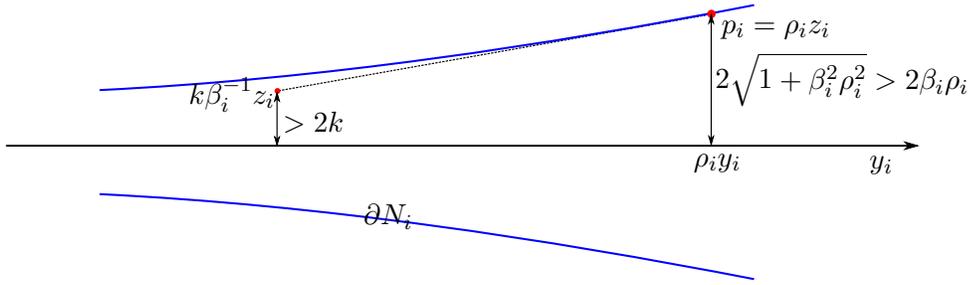
  \caption{Small cones.}
  \label{fig:small cones}
\end{center}
\end{figure}

Recall that $\frac{\Sigma_i-k\beta_i^{-1}z_i}{\sqrt{1+k^2}}$ locally smoothly converges to $S^{1}(\sqrt 2)\times \mb R_y$. Thus we must have that for large $i$,
\begin{equation}\label{eq:far away from Sigmai}
B_{k/4}(k\beta_i^{-1}z_i)\cap \Sigma_i=\emptyset.
\end{equation}
Take $K$ large enough and fixed, then the uniform entropy bound and \eqref{eq:far away from Sigmai} give that 
\begin{equation}\label{eq:entropy small}
F_{K\beta_i^{-1}z_i,1}(\Sigma_i)<1/2.
\end{equation}

Now we derive a entropy lower bound to get a contradiction. Because $\rho_iz_i\in\Sigma_i$, we always have
\[\lim_{t\rightarrow 0}F_{\rho_iz_i,t}(\Sigma_i)=1.\]
Then by the monotonicity formula for $F$-functional, 
\begin{align*}
\lim_{t\rightarrow 0}F_{\rho_iz_i,t}(\Sigma_i)&=\lim_{t\rightarrow 0}F_{\rho_iz_i,1-\frac{1-t}{\rho_i^2}\cdot\rho_i^2}(\Sigma_i)\\
&\leq \limsup_{i\rightarrow\infty}\lim_{t\rightarrow 0}F_{K\beta_i^{-1}z_i,1-\frac{1-t}{\rho_i^2}\cdot(K\beta_i^{-1})^2}(\Sigma_i)\\
&\leq \limsup_{i\rightarrow\infty}F_{K\beta_i^{-1}z_i,1}(\Sigma_i),
\end{align*}
which contradicts \eqref{eq:entropy small}.

Above all, $\Sigma_i\cap \mathcal{C}_{U,R}\subseteq N_i$ for large $i$.
\end{proof}

We now prove Proposition \ref{prop:cone} by contradiction. Suppose on the contrary that $\Sigma_i\cap \mathcal{C}_{U,R}$ is not diffeomorphic to $\Sigma\cap \mathcal{C}_{U,R}$ for a subsequence of $\Sigma_i$ (still denoted by $\Sigma_i$). Then take $r_i$ so that $\Sigma_i$ is not diffeomorphic to $\Sigma$ in a suitable neighborhood of $r_iy_i$. If $r_i$ is uniformly bounded, then it contradicts the fact that $\Sigma_i$ locally smoothly converges to $\Sigma$. If $r_i$ is unbounded, then by taking the limit, we see a contradiction to the conclusion in Step 2.

{\noindent\bf Step 4:}
{\em $\Sigma_i\cap\mathcal C_{U,R}$ is diffeomorphic to $\Sigma\cap \mathcal C_{U,R}$.}
\begin{proof}[Proof of Step 4]
It suffices to show that there exists $I>0$ such that for any $i>I$ and $r>R$, $\Sigma_i\cap\bigcup_{\rho=r}^{r+1}B_{2\sqrt{1+\beta_i^2\rho^2}}^{\rho y_i}(\rho y_i)$ is diffeomorphic to $S^1\times [0,1]$. Define
\[\Sigma_i(r_i;y_i)=\Sigma_i\cap\bigcup_{\rho=r}^{r+1}B_{2\sqrt{1+\beta_i^2\rho^2}}^{\rho y_i}(\rho y_i).\]

Assume the statement is not true. Then for each $i$, take $r_i>R$ such that $\Sigma_i(r_i;y_i)$ is not diffeomorphic to $S^1\times [0,1]$. 

If $r_i$ is bounded, take a subsequence of $r_i$ (still denoted by $r_i$) such that $\lim r_i=r$. Note that $\Sigma_i(r_i;y_i)-r_iy_i$ smoothly converges to $(\Sigma-ry)\cap B_2(0)\times [0,1]$, which leads to a contradiction.

If $r_i$ is unbounded and $r_i\beta_i$ is bounded, take a subsequence of $r_i$ (still denoted by $r_i$) such that $\lim r_i=+\infty$ and $\lim r_i\beta_i<+\infty$. Then we have proved in Step 3 that $\frac{\Sigma_i-r_iy_i}{\sqrt{1+\beta_i^2 r_i^2}}$ smoothly converges to $S^1(\sqrt 2)\times \mathbb R$. It follows that $\frac{\Sigma_i(r_i;y_i)-r_iy_i}{\sqrt{1+\beta_i^2 r_i^2}}$ smoothly converges to $S^1(\sqrt 2)\times \mathbb [0,1]$, which leads to a contradiction.

It remains to consider $r_i\beta_i\rightarrow\infty$. Then the function $|x|^2$ must have critical points on $\Sigma_i(r_i;y_i)$. Without loss of generality, denote by $\rho_iz_i\in \Sigma_i(r_i;y_i)$ ($|z_i|=1$) the critical point of $|x|^2$. It follows that $\rho_i\beta_i\rightarrow\infty$. By Theorem \ref{Theorem: blow up shrinker to get translator} and \ref{Theorem: Asymptotic to a cone main theorem}, $\frac{\rho_i}{2}(\Sigma_i-\rho_iz_i)$ locally smoothly converges to a Bowl soliton. Thus there exists $(x,t)\in\mb R^3\times (0,\infty)$ (see \cite{Gua16}) so that 
\begin{equation}\label{eq:F lower bound}
\liminf_{i\rightarrow\infty}F_{x,t}(\rho_i(\Sigma_i-\rho_iz_i))>\frac{1+\lambda_1}{2}.
\end{equation}
However, the monotonicity formula of $F$-functional gives that 
\begin{align*}
\lim_{i\rightarrow\infty}F_{x,t}(\rho_i(\Sigma_i-\rho_iz_i)) &=\lim_{i\rightarrow\infty}F_{\rho_i(z_i+x/\rho^2_i),1+(t/\rho_i^4-1/\rho_i^2)\cdot\rho_i^2}(\Sigma_i)\\
&\leq\limsup_{k\rightarrow\infty}\limsup_{i\rightarrow\infty} F_{k\beta_i^{-1}(z_i+x/\rho^2_i),1+(t/\rho_i^4-1/\rho_i^2)\cdot(k\beta_i^{-1})^2}(\Sigma_i)\\
&=\limsup_{k\rightarrow\infty}\limsup_{i\rightarrow\infty}F_{k\beta_i^{-1}z_i,1}(\Sigma_i)\\
&=\limsup_{k\rightarrow\infty}\limsup_{i\rightarrow\infty}F_{k\beta_i^{-1}(z_i-y_i),1}(\Sigma_i-k\beta_i^{-1}y_i).
\end{align*}
A direct computation gives that, for large $i$,
\begin{align*}
k\beta_i^{-1}\cdot|z_i-y_i|&=k\beta_i^{-1}\cdot|\rho_iz_i-\rho_iy_i|/\rho_i\leq k\beta_i^{-1}\cdot 2\dist(p_i,\mb R_{y_i})/\rho_i\\
&\leq k\beta_i^{-1}\cdot 4\sqrt{1+\beta_i^2\rho_i^2}/\rho_i\leq 8k,
\end{align*}
where we used the fact that $z_i\in \Sigma_i(r_i;y_i)$. Hence we have that 
\begin{align*}
\lim_{i\rightarrow\infty}F_{x,t}(\rho_i(\Sigma_i-\rho_iz_i))&\leq \limsup_{k\rightarrow\infty}\limsup_{i\rightarrow\infty}F_{k\beta_i^{-1}(z_i-y_i),1}(\Sigma_i-k\beta_i^{-1}y_i)\\
&\leq \limsup_{k\rightarrow\infty}\limsup_{i\rightarrow\infty}\sup_{z\in B_{8k}(0)}F_{z,1}(\Sigma_i-k\beta_i^{-1}y_i)\\
&\leq\limsup_{k\rightarrow\infty}\sup_{z\in B_{8k}(0)}F_{z,1}(S^{1}(\sqrt{2(1+k^2)}\times \mb R)=1,
\end{align*}
where we used that $\frac{\Sigma_i-k\beta_i^{-1}y_i}{\sqrt{1+k^2}}$ locally smoothly converges to $S^1(\sqrt 2)\times \mb R_y$. This entropy upper bound contradicts \eqref{eq:F lower bound}. Then the proof of Step 4 is finished.
\end{proof}
We have completed the proof of the proposition.
\end{proof}

\bigskip
\subsection{$\Sigma_i\cap\mathcal C_U$ is compact}
In this part, we consider that $\lim_{\rho\rightarrow\infty}\rho^{-1}\Sigma_i\cap U=\emptyset$. Then $\Sigma_i\cap \mathcal C_U$ is compact for all $i$. Take $q_i\in\Sigma_i\cap \mathcal C_U$ such that 
\[ |q_i|=\sup_{x\in\Sigma_i\cap \mathcal C_U}|x|.\]

Set $y_i=q_i/|q_i|$ and $d_i=|q_i|$. Then $d_i\rightarrow+\infty$ and $y_i\rightarrow y$.

\begin{proposition}\label{prop:compact}
$\Sigma_i\cap\mathcal C_{U,R}$ is diffeomorphic to a disk.
\end{proposition}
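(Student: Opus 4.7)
Plan: I follow the blueprint of Proposition~\ref{prop:cylinder end}, with the crucial modification that $\Sigma_i\cap\mathcal C_U$ is now capped off at the tip $q_i=d_iy_i$ rather than extending to infinity. Since $q_i$ realizes the maximum of $|x|^2$ on the compact set $\Sigma_i\cap\mathcal C_U$, the tangential gradient of $|x|^2$ vanishes at $q_i$, giving $q_i^\top=0$, hence $H(q_i)=|q_i|/2=d_i/2$ and $|A|(q_i)\ge d_i/2$. Theorem~\ref{Theorem: blow up shrinker to get translator} (with $\delta=1/2$) then produces a Bowl soliton $\widetilde\Sigma$ as the locally smooth subsequential limit of $\frac{d_i}{2}(\Sigma_i-q_i)$, so that a small neighborhood of $q_i$ in $\Sigma_i$ (at scale $O(1/d_i)$) is Bowl-like and hence diffeomorphic to a disk.

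Next, I would establish the cylindrical analog of Step~2 of Proposition~\ref{prop:cylinder end}: for any sequence $\rho_i\to\infty$ with $d_i-\rho_i\to\infty$, the translation $\Sigma_i-\rho_iy_i$ converges locally smoothly along a subsequence to $S^1(\sqrt 2)\times\mathbb{R}_y$. Standard Brakke-flow compactness and regularity (Lemma~\ref{lem:brakke compactness}, Theorem~\ref{thm:Brakke regularity}) extract a limit $\{\nu_t\}$ with $\nu_{-1}$ $F$-stationary; translation invariance in the shift parameter (replacing $\rho_i$ by $\rho_i+\tau$ for fixed $\tau\in\mathbb R$) produces the line splitting $\nu_{-1}=C\times\mathbb R_y$ for some $1$-dimensional $F$-stationary varifold $C\subset\mathbb R^2$, and Proposition~\ref{prop:entropy upper bound} caps the entropy by $\lambda_1$. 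A matching containment statement (analogous to Step~3 of Proposition~\ref{prop:cylinder end}) then confines $\Sigma_i\cap\mathcal C_{U,R}$ to a thin tube around $\mathbb{R}_{y_i}$ capped off near $q_i$.

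With the cylindrical neck and Bowl cap in hand, I would argue that $|x|^2$ has $q_i$ as its unique critical point on $\Sigma_i\cap\mathcal C_{U,R}$: any other critical point $p_i$ would satisfy $p_i^\perp=p_i$, forcing the axis direction $y_i$ to be normal to $T_{p_i}\Sigma_i$; but in the cylindrical region $y_i$ is asymptotically tangent (contradiction), while in the Bowl cap region the tip is the unique point where $y_i$ is normal, forcing $p_i=q_i$. Morse theory then shows $\Sigma_i\cap\mathcal C_{U,R}$ deformation retracts onto $q_i$ (alternatively, one may argue by contradiction as in Step~4 of Proposition~\ref{prop:cylinder end}, ruling out bad topology at heights $r_i$ in the three regimes $r_i$ bounded, $r_i\to\infty$ with $d_i-r_i\to\infty$, or $d_i-r_i$ bounded, via the local smooth convergence, the cylindrical convergence, and the Bowl convergence respectively). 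Either way the conclusion is that $\Sigma_i\cap\mathcal C_{U,R}$ is diffeomorphic to a disk.

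The main obstacle I anticipate is the entropy lower bound $F_{0,1}(\nu_{-1})\ge\lambda_1$ needed to identify the limit as the round cylinder. In Proposition~\ref{prop:cylinder end} this followed immediately from Huisken monotonicity applied to each $\Sigma_i$, since each $\Sigma_i$ has a cylindrical end itself and therefore $\lim_{\rho\to\infty}F_{\rho y_i,1}(\Sigma_i)=\lambda_1$. In the present compact case that limit is zero for each $\Sigma_i$, so the lower bound must be extracted differently: one may rule out the plane case for $C$ by showing that $\Sigma_i-\rho_iy_i$ contains points close to $S^1(\sqrt 2)\times\{0\}$, transferred from the cylindrical cross-sections of $\Sigma$ at bounded heights and propagated upward along $\mathbb R_{y_i}$, using that no Bowl-type blow-up of $\Sigma_i$ can occur before reaching the tip $q_i$.
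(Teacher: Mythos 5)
Your outline correctly identifies the Bowl-soliton blow-up at the tip $q_i=d_iy_i$ (via $q_i^\top=0$, $H(q_i)=d_i/2$, and Theorem~\ref{Theorem: blow up shrinker to get translator}), and correctly anticipates that the entropy lower bound is the crux. But the ``cylindrical analog of Step~2'' you propose has a genuine gap that your suggested fix does not close.

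Concretely, you propose to show that the \emph{unscaled} translates $\Sigma_i-\rho_iy_i$ (under $d_i-\rho_i\to\infty$) converge to $S^1(\sqrt2)\times\mathbb R_y$, by extracting an $F$-stationary limit $\nu_{-1}$ from the Brakke flows $\mu^i_t=\mathcal H^2\lfloor(\sqrt{-t}\,\Sigma_i-\rho_iy_i)$. In Proposition~\ref{prop:cylinder end} the $F$-stationarity of $\nu_{-1}$ came from $\int\Phi(\cdot,t)\,d\mu^i_t=F_{\rho_iy_i/\sqrt{-t},\,1}(\Sigma_i)\to\lambda_1$ \emph{for every} $t<0$, which used that $\Sigma_i$ itself has a cylindrical end. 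Here $\Sigma_i\cap\mathcal C_U$ is compact, so $\lim_iF_{\rho_iy_i/\sqrt{-t},\,1}(\Sigma_i)$ genuinely depends on $t$ (it tends to $0$ as $t\uparrow 0$), the Gaussian density of the limiting Brakke flow is not constant in $t$, and $\nu_{-1}$ is therefore not $F$-stationary. Your ``transfer points close to $S^1(\sqrt2)$ from bounded heights and propagate upward'' suggestion rules out neither a strictly smaller cylinder nor a non-$F$-stationary limit, so it does not supply the needed density identity.

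The paper resolves this by working with the \emph{rescaled} translates $\frac{\Sigma_i-\rho_iy_i}{\sqrt{1-\rho_i^2/d_i^2}}$ and the monotone quantity $F_{\rho y_i,\,1-\rho^2/d_i^2}(\Sigma_i)$ (the $F_{sV,1+as^2}$ monotonicity with $a=-1/d_i^2$). Step~1 shows $F_{\rho_iy_i,\,1-\rho_i^2/d_i^2}(\Sigma_i)\to\lambda_1$ whenever $d_i(d_i-\rho_i)\to\infty$, by sliding all the way to $\rho=d_i-\rho/d_i$ and invoking both the Bowl limit $\frac{d_i}{2}(\Sigma_i-d_iy_i)\to\Sigma_y$ and the fact that the blow-down of $\Sigma_y$ is the cylinder. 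Because the corresponding Brakke flows now have \emph{constant} Gaussian density $\lambda_1$ in the limit, $\nu_{-1}$ is $F$-stationary, and the line-splitting plus entropy bound identify it as $S^1(\sqrt 2)\times\mathbb R_y$. Note also that the correct dichotomy is $d_i(d_i-\rho_i)\to\infty$ (cylinder limit) versus $d_i(d_i-\rho_i)\to\rho<\infty$ (rescaled Bowl limit), not $d_i-\rho_i\to\infty$; your condition misses the regime $d_i-\rho_i$ bounded with $d_i(d_i-\rho_i)\to\infty$. Relatedly, the containment region $N_i$ must have radius $\sim 10\sqrt{1-\rho^2/d_i^2}$ shrinking toward the tip (and a separate cap piece), which your ``thin tube'' description leaves unspecified. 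Your final Morse-theoretic cleanup (arguing $q_i$ is the unique critical point of $|x|^2$) is a plausible alternative to the paper's Step~4, but it too needs the corrected Step~2 above to rule out critical points at intermediate heights.
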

\begin{proof}
The proof is divided into four steps:

{\noindent\bf Step 1:}
{\em For any $\rho_i\rightarrow+\infty$ with $d_i(d_i-\rho_i)\rightarrow+\infty$, $F_{\rho_iy_i,1-\rho_i^2/d_i^2}(\Sigma_i)\rightarrow\lambda_1$.}

\begin{proof}[Proof of Step 1]
By Theorem \ref{Theorem: blow up shrinker to get translator}, $\frac{d_i}{2}(\Sigma_i-d_iy_i)$ locally smoothly converges to a translating soliton $\Sigma_y$ with translating direction $-y$. According to the entropy bound, it is a Bowl soliton by Theorem \ref{thm:translator low entropy}. Hence $\frac{\Sigma_y+\rho y}{\sqrt \rho}$ locally smoothly converges to $S^1(\sqrt2)\times \mathbb R_y$ as $\rho\rightarrow+\infty$. By the monotonicity of $F$-functional,
\begin{align*}
\liminf_{i\rightarrow\infty}F_{\rho_iy_i,1-\frac{\rho_i^2}{d_i^2}}(\Sigma_i) 
&\geq\liminf_{\rho\rightarrow\infty}\liminf_{i\rightarrow\infty} F_{(d_i-\frac{\rho}{d_i})y_i,1-\frac{1}{d_i^2}(d_i-\frac{\rho}{d_i})^2}(\Sigma_i)\\
&=\liminf_{\rho\rightarrow\infty}\liminf_{i\rightarrow\infty}F_{0,1}(\frac{d_i(\Sigma_i-d_iy_i)+\rho y_i}{\sqrt{2\rho-\rho^2/d_i^2}})\\
&=\liminf_{\rho\rightarrow\infty}F_{-\frac{\rho y}{2},\frac{\rho}{2}}(\Sigma_y)\\
&=\lambda_1.
\end{align*}
Here the first inequality follows from the fact that for a fixed $\rho$, when $d_i$ is sufficiently large, $(d_i-\frac{\rho}{d_i})\geq\rho_i$.
By Proposition \ref{prop:entropy upper bound}
\[
\limsup_{i\rightarrow\infty}F_{\rho_iy_i,1-\frac{\rho_i^2}{d_i^2}}(\Sigma_i)
\leq \liminf_{\rho\rightarrow\infty}F_{\rho y,1}(\Sigma)=\lambda_1.
\]
This completes the proof.
\end{proof}	

{\noindent\bf Step 2:}
{\em For any $\rho_i\rightarrow+\infty$ with $d_i(d_i-\rho_i)\rightarrow+\infty$, $\frac{\Sigma_i-\rho_iy_i}{\sqrt{1-\rho_i^2/d_i^2}}$ locally smoothly converges to $S^1(\sqrt2)\times\mathbb R_y$ up to a subsequence.

For $\rho_i\rightarrow+\infty$ with $d_i(d_i-\rho_i)\rightarrow \rho$, $\frac{\Sigma_i-\rho_iy_i}{\sqrt{1-\rho_i^2/d_i^2}}$ locally smoothly converges to $\frac{2\Sigma_y+\rho y}{\sqrt {2\rho}}$. }

\begin{remark}
Note that Theorem \ref{Theorem: blow up shrinker to get translator} gives that  $\frac{d_i}{2}(\Sigma_i-d_iy_i)$ locally smoothly converges to a translating soliton $\Sigma_y$ with translating direction $-y$. On the one hand, the blow-down sequence of the end of a Bowl soliton is a round cylinder. On the other hand, $\Sigma-\rho y$ also converges to a round cylinder. In the first case of Step 2, the sequence must converge to a round cylinder. 
\end{remark}

\begin{proof}[Proof of Step 2]
Define the Brakke flow for $t<0$,
\[ \mu_t^i=\mathcal H^2\lfloor\Big(\sqrt{\frac{1}{1-\rho_i^2/d_i^2}-1-t}\,\Sigma_i-\frac{\rho_iy_i}{\sqrt{1-\rho_i^2/d_i^2}}\Big).\]
By the compactness of Brakke flows, up to a subsequence, $\mu_t^i$ converges to a Brakke flow $\{\nu_t\}_{t<0}$ in the sense of Radon measures. Computing directly, we have
\begin{equation}
\mu_t^i=\mathcal H^2\lfloor\Big(\sqrt{-t}(\frac{\Sigma_i-\widetilde\rho_iy_i}{\sqrt{1-\widetilde\rho_i^2/d_i^2}})\Big),
\end{equation}	
where $\widetilde\rho_i=\frac{\rho_i}{\sqrt{1-(1+t)(1-\rho_i^2/d_i^2)}}$. Note that for any $t<0$ fixed, 
\[
\frac{\widetilde\rho_i}{d_i}=\frac{\rho_i/d_i}{\sqrt{1-(1+t)(1-\rho_i^2/d_i^2)}}
                            =\frac{\rho_i/d_i}{\sqrt{\rho_i^2/d_i^2-(1-\rho_i^2/d_i^2)t}}.
\]
Hence $\frac{\widetilde \rho_i}{d_i}<1$ for sufficiently large $i$. Moreover,
\begin{align*}
1-\frac{\widetilde \rho_i^2}{d_i^2}&=\frac{-t(1-\rho_i^2/d_i^2)}{\rho_i^2/d_i^2-(1-\rho_i^2/d_i^2)t}\\
                &>\frac{-t}{1-t}(1-\frac{\rho_i^2}{d_i^2}).
\end{align*}
Since $d_i^2-\rho_i^2=d_i^2-d_i\rho_i+d_i\rho_i-\rho_i^2\geq \rho_i(d_i-\rho_i+1)\geq\rho_i\rightarrow +\infty$, we have that $d_i^2-\widetilde \rho_i^2\rightarrow+\infty$. Consequently, $d_i(d_i-\widetilde \rho_i)\rightarrow+\infty$.

Now we can compute the $F$-functional as follows:
\begin{align*}
\int\Phi(x,t)d\nu_t&=\lim_{i\rightarrow\infty}\int\Phi(x,t)d\mu_t^i\\
                   &=\lim_{i\rightarrow\infty}\int_{\frac{\Sigma_i-\widetilde\rho_iy_i}{\sqrt{1-\widetilde\rho_i^2/d_i^2}}}\Phi(x,-1)dx\\
                   &=\lim_{i\rightarrow\infty}F_{\widetilde\rho_iy_i,1-\frac{\widetilde\rho_i^2}{d_i^2}}(\Sigma_i)\\
                   &=\lambda_1.
\end{align*} 

It remains to show that $\nu_{-1}$ supports on a smooth self-shrinker. Indeed, by monotonicity formulas, $\nu_{-1}$ is $F$-stationary, i.e. 
\[\overrightarrow{H}+\frac{S(x)^\perp\cdot x}{2}=0,\ \ \ \nu_{-1}-a.e.\  x.\]

Now we are going to prove that $\nu_{-1}$ splits off a line. For any $\tau\in\mathbb R$, set 
\[\rho'_i=\rho_i+\tau\sqrt{1-\frac{\rho_i^2}{d_i^2}} \text{ \ \  \ and\ \ \ } s_i=\frac{1-\rho'^2_i/d_i^2}{1-\rho_i^2/d_i^2}.\]
It follows that $d_i(d_i-\rho_i')\rightarrow+\infty$ and $s_i\rightarrow 1$ as $i\rightarrow\infty$ if $d_i(d_i-\rho_i)\rightarrow\infty$ and $\rho_i\rightarrow\infty$. Thus $\nu_{-1}-\tau y$, the limit of $\mu_{-1}^i-\tau y_i=\mathcal H^2\lfloor \sqrt{s_i}\frac{\Sigma_i-\rho'_iy_i}{\sqrt{1-\rho_i'^2/d_i^2}}$, is also $F$-stationary. This deduces that 
\[\overrightarrow H(x+y)+\frac{S(x+y)^\perp\cdot (x+y)}{2}=0, \nu_{-1}-a.e.\]
Therefore, $S(x)^\perp y=0$, which implies that $\nu_{-1}$ splits off a line. Note $F_{0,1}(\nu_{-1})=\lambda_1$. It follows that $\nu_{-1}$ is supporting on $S^1(\sqrt 2)\times \mathbb R_y$ with multiplicity $1$. Then by the local regularity theorem for Brakke flows, we conclude that $\frac{\Sigma_i-\rho_iy_i}{\sqrt{1-\rho_i^2/d_i^2}}$ locally smoothly converges to $S^1(\sqrt2)\times\mathbb R$ up to a subsequence.

The second statement follows from the fact that $\frac{d_i}{2}(\Sigma_i-d_iy_i)$ locally smoothly converges to a Bowl soliton $\Sigma_y$.
\end{proof}

{\noindent\bf Step 3:}
{\em For sufficiently large $i$, $\Sigma_i\cap \mathcal{C}_{U,R}\subseteq N_i$, where $N_i$ (see Figure \ref{fig:Ni}) is defined as
\[N_i=\bigcup_{\rho=0}^{d_i-10/d_i}B^{\rho y_i}_{10\sqrt{1-\rho ^2/d_i^2}}(\rho y_i)\bigcup\bigcup_{\rho=d_i-20/d_i}^{d_i}B^{\rho y_i}_{10\sqrt{1+d_i(d_i-\rho)}/d_i}(\rho y_i).\]}


\begin{figure}[h]
\begin{center}
\def\svgwidth{0.9\columnwidth}
  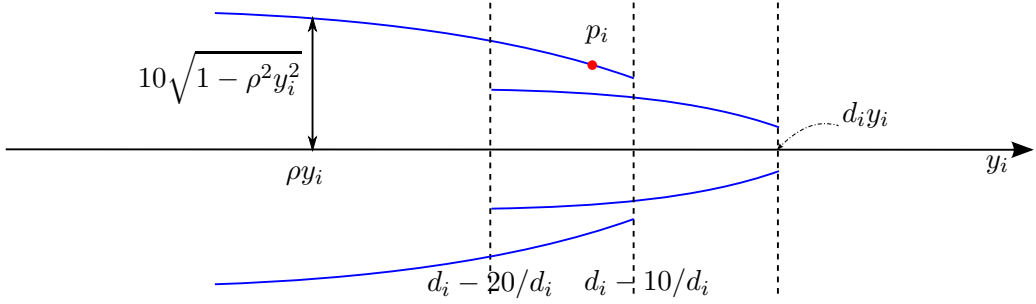
  \caption{Boundary of $N_i$.}
  \label{fig:Ni}
\end{center}
\end{figure}

\begin{proof}[Proof of Step 3]
Assume on the contrary that a subsequence of $\Sigma_i\cap \mathcal{C}_{U,R}$ is not contained in $N_i$. We still denote the subsequence by $\{\Sigma_i\}$. Then for each $i$, $\Sigma_i\cap \mathcal{C}_{U,R}\cap\partial N_i\not= \emptyset$. Take $p_i\in\Sigma_i\cap \mc C_{U,R}\cap\partial N_i$ and set $\rho_i=\langle p_i,y_i\rangle$. 

\medskip
If $\rho_i$ has a bounded subsequence, take a subsequence of $p_i$ (still denoted by $p_i$) such that $\lim\rho_i=\rho_\infty>R$. Then for large $i$, $p_i\in\partial B^{\rho_i y_i}_{10\sqrt{1-\rho_i^2/d_i^2}}(\rho_i y_i)$, which implies that 
\begin{equation}\label{eq:bounded rhoi}
\Bigg|\frac{p_i-\rho_iy_i}{\sqrt{1-\rho_i^2/d_i^2}}\Bigg|=10,
\end{equation}
Recall that $\Sigma_i-\rho_iy_i$ locally smoothly converges to $\Sigma-\rho_\infty y$. Then \eqref{eq:bounded rhoi} contradicts the fact that $\Sigma-\rho y$ is close to $S^1(\sqrt 2)\times \mb R_y$ near the origin for $\rho>R$..

\medskip
If $\rho_i\rightarrow\infty$ and $d_i(d_i-\rho_i)$ is unbounded, take a subsequence of $p_i$ (still denoted by $p_i$) such that $\lim d_i(d_i-\rho_i)=+\infty$. Then again $p_i\in\partial B^{\rho y_i}_{10\sqrt{1-\rho ^2/d_i^2}}(\rho y_i)$, which implies that
\[\Bigg|\frac{p_i-\rho_iy_i}{\sqrt{1-\rho_i^2/d_i^2}}\Bigg|=10.\]
This contradicts the fact that $\frac{\Sigma_i-\rho_iy_i}{\sqrt{1-\rho_i^2/d_i^2}}$ locally smoothly converges to $S^1(\sqrt 2)\times \mathbb R_y$.

\medskip
It remains to consider the case that $\rho_i\rightarrow\infty$ and $d_i(d_i-\rho_i)$ is bounded. Note that the boundedness of $d_i(d_i-\rho_i)$ implies that $d_i^2-\rho_i^2=(d_i+\rho_i)(d_i-\rho_i)$ is also bounded. Let $\tau$ be a limit of $d_i(d_i-\rho_i)$ (up to a subsequence). Then 
\[d_i(\Sigma_i-\rho_iy_i)=d_i(\Sigma_i-d_iy_i)+d_i(d_i-\rho_i)y_i\rightarrow 2\Sigma_y+\tau y,\]
where $\Sigma_y$ is the Bowl soliton with direction $-y$. Denote by $p$ the limit of $d_i(p_i-\rho_iy_i)$. Then 
\begin{equation}\label{eq:step3:p in plane}
p\in\{x\in\mathbb R^3:\langle x,y\rangle=0\}\cap (2\Sigma_y+\tau y).
\end{equation}

\begin{figure}[h]
\begin{center}
\def\svgwidth{0.65\columnwidth}
  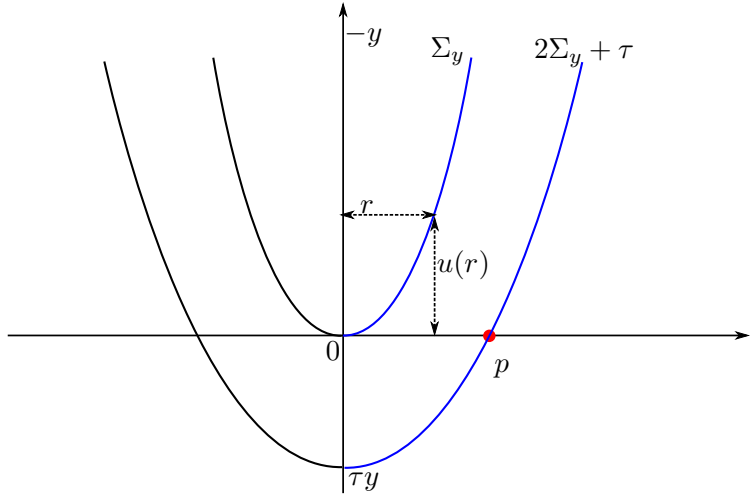
  \caption{Bowl soliton.}
  \label{fig:bowl}
\end{center}
\end{figure}

Let $u:[0,\infty)\rightarrow \mathbb R$ be the graph function of $\Sigma_y\cap(\mathbb R_y\times \mathbb R_p)$ (see Figure \ref{fig:bowl}). Then $u$ satisfies (see \cite{ADS19}*{(8.7)})
\[\frac{u''}{1+{u'}^2}+\frac{u'}{r}=\frac{1}{2},\]
and $u(0)=u'(0)=0$. Since the Bowl soliton is convex, $u''\geq 0$ everywhere. Hence $u''+u'/r\geq 1/2$, which implies that $u\geq r^2/8$. Then a direct computation and (\ref{eq:step3:p in plane}) give that 
\begin{equation}\label{eq:estimate of |p|}
|p|\leq 4\sqrt{\tau}.
\end{equation}

Now we are going to estimate $|p|$ by the choice of $p_i$ to get a contradiction. 
\begin{claim}\label{claim:distance has a lower bound}
For $\rho_i$ satisfying $lim_{i\rightarrow \infty}d_i(d_i-\rho_i)<\infty$, we always have 
\[|p_i-\rho_iy_i|\geq 10\sqrt{1+d_i(d_i-\rho_i)}/d_i.\]
\end{claim}
\begin{proof}
If $\rho_i\leq d_i-20/d_i$ for large $i$, then we always have 
\[|p_i-\rho_iy_i|=10\sqrt{1-\rho^2/d_i^2}.\]
Recall that the boundedness of $d_i(d_i-\rho_i)$ implies that $\rho_i>d_i-1$ for large $i$ and 
\begin{equation}\label{eq:comparison of two radii}
10\sqrt{1-\rho^2/d_i^2}\geq 10\sqrt{1+d_i(d_i-\rho)}/d_i
\end{equation}
is equivalent to $\rho^2-d_i\rho+1\leq 0$. Hence, the claim holds true in this case.

We now consider $\rho_i>d_i-20/d_i$. In this case, the claim follows from the definition of $N_i$.
\end{proof}

Claim \ref{claim:distance has a lower bound} gives that 
\begin{equation*}
|p|=\lim_{i\rightarrow\infty}d_i\cdot|p_i-\rho_i y_i|\geq \lim_{i\rightarrow\infty}10\sqrt{1+d_i(d_i-\rho)}=10\sqrt {1+\tau}.
\end{equation*}
which leads to a contradiction to (\ref{eq:estimate of |p|}).

Above all, we must have $(\Sigma_i\cap\mathcal{C}_{U,R})\subseteq N_i$.
\end{proof}

\medskip
Now we argue by contradiction to study the topology of $\Sigma_i\cap\mathcal{C}_{U,R}=\Sigma_i\cap N_i$ outside a small neighborhood of $d_iy_i$. We want to prove that the ends $\Sigma_i\cap\mathcal{C}_{U,R}$ outside a small neighborhood of $d_iy_i$ are topological cylinders. Let us first sketch the ideas. Roughly speaking, suppose that $r_iy_i$ are the bad points, in the sense that they prevent the parts of ends being cylinder, then
\begin{itemize}
\item if $r_i$ is bounded, it contradicts the fact that $\Sigma_i$ locally smoothly converges to $\Sigma$;
\item if $r_i$ is unbounded and $d_i(d_i-r_i)$ is unbounded, we apply Step 2 to get a contradiction;
\item if $r_i$ is unbounded and $d_i(d_i-r_i)$ is bounded, we use Theorem \ref{Theorem: blow up shrinker to get translator} to get a contradiction.
\end{itemize} 
The following are the precise arguments.

{\noindent\bf Step 4:}
{\em $\Sigma_i\cap\mathcal C_{U,R}\setminus \mathcal C_{U,d_i-20/d_i}$ is diffeomorphic to $\Sigma\cap\mathcal C_{U,R}\setminus\mathcal C_{U,d_i-20/d_i}$ .}
\begin{proof}[Proof of Step 4]
It suffices to show that there exists $I>0$ such that for any $i>I$ and $r_i\in(R,d_i-20/d_i)$, $\Sigma_i\cap\bigcup_{\rho=r_i}^{r_i+\sqrt{1-r_i^2/d_i^2}}B_{\sqrt{1-\rho^2/d_i^2}}^{\rho y_i}(\rho y_i)$ is diffeomorphic to $S^1\times [0,1]$. Set  
\[\Sigma_i(r_i):=\Sigma_i\cap\bigcup_{\rho=r_i}^{r_i+\sqrt{1-r_i^2/d_i^2}}B_{\sqrt{1-\rho^2/d_i^2}}^{\rho y_i}(\rho y_i).\]

We argue by contradiction. Assume the statement is not true. Then for each $i$, take $r_i\in(R,d_i-20/d_i)$ such that $\Sigma_i(r_i)$ is not diffeomorphic to $S^1\times [0,1]$. 

\medskip
If $r_i$ is bounded, let $r>0$ so that a subsequence of $r_i$ (still denoted by $r_i$) satisfies $\lim r_i=r$. Then $\Sigma_i(r_i)$ locally smoothly converges to $(\Sigma-r y)\cap B_2(0)\times [0,1]$, which leads to a contradiction.

\medskip
If $r_i$ is unbounded and $d_i(d_i-r_i)$ is unbounded, take a subsequence of $r_i$ (still denoted by $r_i$) such that $\lim r_i=+\infty$ and $\lim d_i(d_i-r_i)=+\infty$. Then by Step 2, $\frac{\Sigma_i-r_iy_i}{\sqrt{1-r_i^2/d_i^2}}$ locally smoothly converges to $S^1(\sqrt2)\times\mathbb R_y$. Therefore, $\frac{\Sigma_i(r_i)-r_iy_i}{\sqrt{1-r_i^2/d_i^2}}$ locally smoothly converges to $S^1(\sqrt{2})\times [0,1]$, which leads to a contradiction.

\medskip
If $r_i$ is unbounded but $d_i(d_i-r_i)$ is bounded, take a subsequence of $r_i$ (still denoted by $r_i$) such that $\lim r_i=+\infty$ and $\lim d_i(d_i-r_i)=\alpha$. Then by assumptions, $\alpha\geq 20$. Note that 
\[\lim_{i\rightarrow\infty}d_i\sqrt{1-r_i^2 /d_i^2}=\lim_{i\rightarrow\infty}\sqrt{(d_i+r_i)(d_i-r_i)}=\sqrt{2\alpha}.\]
Together with Theorem \ref{Theorem: blow up shrinker to get translator}, it follows that $\frac{\Sigma_i(r_i)-r_iy_i}{\sqrt{1-r_i^2/d_i^2}}$ locally smoothly converges to $\frac{2\Sigma_y+\alpha y}{\sqrt {2\alpha}}\cap(B_2(0)\times [0,1])$, which leads to a contradiction.
\end{proof}

\vspace{0.5em}
We now proceed the proof of Proposition \ref{prop:compact}. Step 2 suggests that $\Sigma_i\cap \mathcal{C}_{U,d_i-20/d_i}$ is diffeomorphic to a bowl soliton, i.e. a disk; Step 4 suggests that $\Sigma_i\cap \mathcal{C}_{U,R}\backslash \mathcal{C}_{U,d_i-20/d_i}$ is diffeomorphic to a cylinder. Step 3 suggests that $\Sigma_i\cap \mathcal{C}_{U,R}$ is just the union of these two parts.
Now the proposition follows from these steps.

\end{proof}

\begin{proof}[Proof of Theorem \ref{thm:shrinker converge}]
The desired results follow from Theorem \ref{Theorem: Asymptotic to a cone main theorem} and Proposition \ref{prop:cylinder end}, \ref{prop:cone} and \ref{prop:compact}. 
\end{proof}

\appendix

\bigskip
\section{Classification of self-shrinker ends with finite genus}\label{section:end classification}
In \cite{Wang16}, L. Wang classified properly embedded self-shrinker ends with finite topology. The key step is to prove the multiplicity of the tangent flow is $1$. Though L. Wang stated the result for finite topology, the argument works for all self-shrinkers with finite genus. We state it here and sketch the proof for completeness.
\begin{theorem}[\cite{Wang16}*{Theorem 1.1}]\label{thm:tangent flow multiplicity 1}
Let $M\in\mathcal M_\mathcal S(\Lambda,g)$ and $y\in S^2(1)$ such that \[\lim_{\rho\rightarrow\infty}F_{\rho y,1}(M)\geq 1.\] 
Then $\Sigma-\rho y$ locally smoothly converges to $\mathbb R_y\times \mathbb R$ or $\mathbb R_y\times S^1(\sqrt 2)$.
\end{theorem}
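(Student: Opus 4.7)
The plan is to extract a subsequential varifold limit $\nu$ of $\mathcal H^2\lfloor(M-\rho y)$ as $\rho\to\infty$, show $\nu$ is $F$-stationary, show it splits off a line in direction $y$, classify the resulting planar cross-section, reduce to multiplicity one, and finally upgrade varifold convergence to local smooth convergence via Brakke's regularity theorem.

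The entropy bound and Proposition \ref{Proposition: entropy and area growth are equivalent} provide uniform local area bounds on $\{M-\rho y\}_{\rho>0}$, so along any $\rho_i\to\infty$ a subsequence of $\mu^i:=\mathcal H^2\lfloor(M-\rho_iy)$ converges as Radon measures to some $\nu$. The self-shrinker monotonicity (with $V=y$, $a=0$) makes $F_{sy,1}(M)$ decreasing in $s$, so $\Lambda_0:=\lim_{\rho\to\infty}F_{\rho y,1}(M)$ exists and by hypothesis satisfies $\Lambda_0\geq 1$. I next associate the Brakke flow $\mu_t^i=\mathcal H^2\lfloor(\sqrt{-t}\,M-\rho_iy)$ for $t<0$; since $M$ is a self-shrinker these are genuine mean curvature flows with $\mu_{-1}^i=\mu^i$. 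By Lemma \ref{lem:brakke compactness}, a subsequential Brakke-flow limit $\{\nu_t\}_{t<0}$ exists with $\nu_{-1}=\nu$. A direct change of variables gives $\int\Phi(x,t)\,d\mu_t^i=F_{\rho_iy/\sqrt{-t},1}(M)\to\Lambda_0$ for each fixed $t<0$, so by Proposition \ref{prop:F continuous}, $\int\Phi(x,t)\,d\nu_t=\Lambda_0$ for all $t<0$. The equality case of Lemma \ref{Brakke monotonicity} then forces $\vec H+\tfrac12 S(x)^\perp\cdot x=0$ at $\nu$-a.e.\ $x$, so $\nu$ is $F$-stationary.

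For any $\tau\in\R$ the sequence $\rho_i+\tau$ also tends to infinity, and running the same argument yields that $\nu-\tau y=\lim\mathcal H^2\lfloor(M-(\rho_i+\tau)y)$ is also $F$-stationary. Subtracting the two stationarity equations gives $S(x)^\perp\cdot y=0$ for $\nu$-a.e.\ $x$, so $y$ is tangent to $\nu$ everywhere. Hence $\nu=\R_y\times W$ for some $1$-dimensional $F$-stationary integral varifold $W\subset P(y)$ with Gaussian density $\Lambda_0$. The classification of planar $F$-stationary integral varifolds, combined with the entropy bound, forces $W$ to be supported on a line through the origin or on the circle $S^1(\sqrt 2)$, possibly with multiplicity.

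The hardest step is ruling out multiplicity $\geq 2$ for $W$, and this is where the finite-genus hypothesis $g(M)\leq g$ is essential. A $k$-sheeted limit with $k\geq 2$ means that for large $\rho_i$, $M$ contains $k$ nearly parallel sheets over an arbitrarily large region around the limit surface; since $M$ is connected and embedded, each adjacent pair of sheets must eventually be joined, producing at least one handle per joining, and the number of such joinings would grow without bound as $\rho_i\to\infty$, contradicting finiteness of the genus. Once multiplicity one is established, $\nu$ is supported on either the plane $\R_y\times\R$ or the cylinder $\R_y\times S^1(\sqrt 2)$, both smooth, so Brakke's regularity theorem (Theorem \ref{thm:Brakke regularity}) upgrades the varifold convergence to local smooth convergence. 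Since $\Lambda_0$ uniquely determines the limit ($\Lambda_0=1$ giving the plane and $\Lambda_0=\lambda_1$ giving the cylinder), the full $\rho\to\infty$ limit exists, not merely subsequentially, and equals one of $\R_y\times\R$ or $\R_y\times S^1(\sqrt 2)$.
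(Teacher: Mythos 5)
The paper's proof does not try to re-derive the classification from scratch: it observes that the conclusion is exactly L.~Wang's Theorem~1.1 from \cite{Wang16}, and the bulk of the appendix is devoted to showing that L.~Wang's argument (which was stated under the hypothesis of finite \emph{topology}) adapts to the weaker hypothesis of finite \emph{genus}. The delicate work is topological: one must show that for $i$ large, the connected components of $M\cap B^3_{R_i}(\tau_iy)$ remain disjoint in the solid half-cylinder, and the paper rules out the alternative by constructing two simple closed curves with intersection number~$1$ in $M\setminus B_{\rho_0}(0)$, contradicting the choice of $\rho_0$. Your proposal instead tries to rebuild the whole result from the Brakke-flow machinery, and that is where the gaps appear.

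Your first three paragraphs (extracting a subsequential varifold limit, using the equality case of the Brakke monotonicity formula to show $\nu_{-1}$ is $F$-stationary, and shifting by $\tau y$ to conclude $\nu$ splits off a line) are sound and mirror arguments the paper uses repeatedly in Sections~4 and~5 and in Theorem~\ref{thm:translator low entropy}. The gaps are in the last two steps. First, the assertion that ``the classification of planar $F$-stationary integral varifolds, combined with the entropy bound, forces $W$ to be supported on a line through the origin or on $S^1(\sqrt 2)$'' does not follow: the entropy bound here is only $\Lambda_0\leq\Lambda$, and $\Lambda$ is arbitrary, so one cannot invoke the low-entropy dichotomy used, e.g., in Proposition~\ref{prop:smooth for all} (which requires entropy $<2$). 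A priori $W$ could be a union of several lines, an Abresch--Langer curve, or a varifold with junctions; ruling these out is precisely the content of L.~Wang's structural theorem and is not a one-line consequence of the entropy bound. Second, the ``count the handles'' heuristic for multiplicity one is not a proof. Higher multiplicity in a varifold limit does not by itself force sheets to connect (they can remain disjoint and escape along the end), and even when sheets do merge, it does not follow in any obvious way that the number of resulting genus contributions tends to infinity. The rigorous version of this idea is exactly L.~Wang's argument, which requires the maximum principle for self-shrinkers (\cite{Wang16}*{Lemma~4.2}) and a careful construction of intersecting curves; the paper's proof spends its entire length making that adaptation precise. In short, your outline correctly identifies where the genus hypothesis must enter, but the two central steps — classification of the cross-section and the multiplicity-one reduction — are asserted rather than proved, and both are substantially harder than the proposal suggests.
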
 
\begin{proof}
In this proof, we will use the same notations as in \cite{Wang16}. 

The theorem follows from \cite{Wang16}*{\S 5} if we prove \cite{Wang16}*{Theorem 1.3} under the assumptions of finite genus instead of finite topology. Then there exists $\rho_0>0$ so that $M\setminus B_{\rho_0}(0)$ does not contain two simple closed curves with intersection number $1$. Now take a connected component of $M\setminus B_{\rho_0}(0)$, still denoted by $M$. Then there exists $\rho_1>0$ so that $\partial M\subset B_{\rho_1}(0)$ and $\partial M$ lie in a connected component of $M\cap B_{\rho_1}(0)$.

Note that \cite{Wang16}*{Proposition 4.1} holds true under the assumption of finite genus. Hence the desired results follow from \cite{Wang16}*{Theorem 1.3}. We now sketch the proof by pointing out the necessary modifications.

Without any changes, L. Wang's argument \cite{Wang16}*{Proof of Theorem 1.3} shows that given $i$ sufficiently large, for any $k\geq i$ and any connected component 
\[M^{i,k}\subset M\cap(I_y(\tau_k,\tau_{k+1})\times B^2_{R_i}),\]
the two sets
\[\partial M ^{i,k}\cap\{x \in\mathbb R^3: x\cdot y = \tau_k\}
\text{ and } \partial M^{i,k}\cap \{x\in\mathbb R^3: x\cdot y = \tau_{k+1}\}\]
are union of the same number of elements of $\{\gamma_{j}^{i,k}\}$ and $\{\gamma_j^{i,k+1}\}$, respectively. Hence the connected components of $M\cap B^3_{R_i}(\tau_iy)$ extend to infinity for all $i$ sufficiently large.

Then similar to \cite{Wang16}*{Proof of Theorem 1.3}, it suffices to show that, for sufficiently large $i$, the connected components of $M\cap B^3_{R_i}(\tau_iy)$ remain disjoint in the solid half-cylinder $I_y(\tau_i,+\infty)\times B^2_{R_i}$. 

Suppose not, then there exist $i>0$ and $k_l\rightarrow+\infty$ so that two elements of $\{\gamma^{i,k_l}_j\}_{j=1}^L$ lie in the boundary of a connected component of $M\cap (I_y(\tau_{k_l},\tau_{k_l+1})\times B_{R_i}^2)$. Hence there exist $k_0$ sufficiently large and a connected component of $M\cap(I_y(\tau_{k_0},+\infty)\times B^2_{R_i})$, denoted by $N^{i,k_0}$, which is connected at infinity, so that the boundary of a connected component of $N^{i,k_0}\cap (I_y(\tau_{k_0},\tau_{k_0+1})\times B_{R_i}^2)$ contains two of $\{\gamma^{i,k_0+1}_j\}_{j=1}^L$. Denote such a component by $M^{i,k_0}$. Then we can write
\[\partial M^{i,k_0}\cap \{x\in\mathbb R^3:x\cdot y=\tau_{k_0+1}\}=\bigcup_{l=1}^m\gamma_{j_l}^{i,k_0+1},\]
where $m\geq 2$. 

Then the following argument is similar to \cite{Wang16}*{Proof of Theorem 1.3}. Indeed, we can take $\{z_l\}_{l=1}^2,\{\gamma_l\}_{l=1}^2,\gamma$ like L. Wang did in \cite{Wang16}*{Proof of Theorem 1.3}. As $N^{i,k_0}$ is connected at infinity, there exist points $z_1'\in\gamma_1$ and $z_2'\in\gamma_2$ so that they can be jointed by a smooth embedded curve $\gamma'$ in $N^{i,k_0}$ outside a large enough ball so that $\gamma'\cap \gamma_{j_1}^{i,k_0+1}=\emptyset$. Denote the part of $\gamma$ connecting $z_1'$ and $z_2'$ that contains $\gamma_0$ by $\gamma''$. Thus, by a smoothing process, $\overline{\gamma'\cup\gamma''}$ can be made a simple closed smooth curve in $M$ that transversally intersects $\gamma_{j_1}^{i,k_0+1}$ only at $z_1$. Then from the curve $\gamma_{j_1}^{i,k_0+1}$ we will construct, in the following paragraphs, a simple closed curve that transversally intersects $\overline{\gamma'\cup\gamma''}$ only at one point. This leads to a contradiction.

If $\Sigma^{\tau_{k_0+1}}=\mathbb R_y\times \mathbb S^1$, then $\gamma^{i,k_0+1}_{j_1}$ is closed and we are done.

If $\Sigma^{\tau_{k_0+1}}=\mathbb R_y\times \mathbb R$, then by using the maximum principle for self-shrinkers \cite{Wang16}*{Lemma 4.2}, L. Wang has proved that there is a curve $\tilde{\gamma_l}$ in $M$ jointing $z_l$ to $\partial M$ for $l=1,2$ which is disjoint with $\gamma_0$. Note that $\partial M$ lies on a connected component of $M\cap B_{\rho_1}(0)$. Hence $z_1$ and $z_2$ can be jointed by a curve, denoted by $\gamma_{1,2}$, which is disjoint with $\gamma_0$ or $\overline{\gamma'\cup\gamma''}$. Thus, by a smooth process again, $\overline{\gamma_{1,2}\cup\gamma_0}$ can be made the desired simple closed smooth curve.
\end{proof}

Here is an application of Theorem \ref{thm:tangent flow multiplicity 1}.
\begin{lemma}\label{lem:finite ends}
Let $M$ be a self-shrinker with finite entropy and genus. Then $M$ has finitely many ends.
\end{lemma}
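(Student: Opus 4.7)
The plan is to apply Theorem \ref{thm:tangent flow multiplicity 1} to each end of $M$ and then bound the two resulting types of ends separately. Define
\[
K := \Big\{ y \in S^2(1) : \lim_{\rho\to\infty} F_{\rho y,1}(M) \geq 1 \Big\}.
\]
By that theorem, at every $y\in K$ the translated surface $M - \rho y$ converges smoothly either to the plane $\mathbb R_y\times\mathbb R$ (the \emph{conical} directions) or to the round cylinder $\mathbb R_y\times S^1(\sqrt 2)$ (the \emph{cylindrical} directions). A direct analysis shows that the conical directions form smooth simple closed curves in $K$—in fact great circles, since the blow-down $\mathcal C := \lim_{\rho\to\infty} \rho^{-1} M$ is a minimal cone and a smooth $2$-dimensional minimal cone in $\mathbb R^3$ is a union of planes—whereas the cylindrical directions are isolated points of $K$.

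For the conical ends I would use the blow-down varifold $\mathcal C$, which is supported on a union of planes through the origin plus a union of rays (invisible as a $2$-varifold) corresponding to cylindrical axes. By the multiplicity-$1$ statement of Theorem \ref{thm:tangent flow multiplicity 1}, distinct conical ends contribute distinct planes, and each plane satisfies $F_{0,1}(\text{plane}) = 1$. Combining the scaling identity $F_{0,1}(\rho^{-1}M) = F_{0,\rho^2}(M)$ with Proposition \ref{prop:F continuous} and the monotonicity of $F$, one obtains
\[
\#\{\text{conical ends}\} \;=\; F_{0,1}(\mathcal C) \;=\; \lim_{\rho\to\infty} F_{0,\rho^2}(M) \;\leq\; \lambda(M) \;<\; +\infty.
\]

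For the cylindrical ends I would argue by contradiction. If there were infinitely many, in directions $\{y_k\}\subset S^2$, then a subsequence satisfies $y_k\to y_\infty$. The function $g(y):=\lim_{\rho\to\infty} F_{\rho y,1}(M)$, being an infimum (by the monotonicity in \S \ref{subsection:F functional}) over $\rho$ of the continuous function $y\mapsto F_{\rho y,1}(M)$, is upper-semicontinuous, so $g(y_\infty)\geq \lambda_1 > 1$ and $y_\infty\in K$. Choose $\rho_k\to\infty$ with $\rho_k|y_k-y_\infty|\to 0$, so that $B_{10}(\rho_k y_k)\subset B_{11}(\rho_k y_\infty)$ eventually. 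The smooth convergence of $M-\rho y_\infty$ forces $M\cap B_{11}(\rho_k y_\infty)$ to be a $C^\infty$-small perturbation of the tangent at $y_\infty$ (plane or $y_\infty$-cylinder); meanwhile the cylindrical-end structure at $y_k$ forces $M\cap B_{10}(\rho_k y_k)$ to contain a $\sqrt 2$-cylinder piece around $\mathbb R y_k$. When the tangent at $y_\infty$ is a plane this is an immediate contradiction, because no cylinder of radius $\sqrt 2$ sits inside a $C^\infty$-small graph over a plane.

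The main obstacle is the subcase in which the tangent at $y_\infty$ is itself a cylinder, because two round cylinders of radius $\sqrt 2$ with nearly parallel axes are $C^\infty$-close on any ball of bounded size. I would overcome this by choosing the scale to be of order $|y_k-y_\infty|^{-1/2}$ instead of a fixed constant, so that the angular difference between the two tangent cylinders accumulates to a displacement comparable with the cylinder radius itself; at this scale the smooth limits at $y_\infty$ and at $y_k$ are genuinely different. Since $E_k$ and $E_\infty$ must nevertheless be disjoint sub-surfaces of the embedded $M$, each such pair $(E_k,E_\infty)$ forces an additional topologically independent cycle in $M\cap B_{R'}$ for some intermediate radius $R'$, and an infinite collection of such cycles as $k$ varies contradicts the finite-genus hypothesis. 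This is the step where the genus hypothesis enters the proof.
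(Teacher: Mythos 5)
Your strategy of splitting the ends into conical and cylindrical directions and bounding each family separately is reasonable and in the same spirit as the paper, but both halves have problems, and the cylindrical half has a genuine gap.

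On the conical side, the assertion that the blow-down $\mathcal C=\lim_{\rho\to\infty}\rho^{-1}M$ is a \emph{minimal} cone and hence a union of planes, so that the conical directions form great circles, is false. Conical ends of self-shrinkers are asymptotic to general regular cones $\mathcal C_\gamma$ whose links $\gamma\subset S^2$ need not be great circles (the Kapouleas--Kleene--M{\o}ller examples already have wiggly links). Theorem~\ref{thm:tangent flow multiplicity 1} only says that the tangent object after translating in the direction $y\in\gamma$ is the plane $\mathbb R_y\times\mathbb R$; it says nothing about $\gamma$ being a geodesic. Fortunately your count can be salvaged without this claim: each blow-down cone $\mathcal C_j$ is an integral varifold with density $1$ off the vertex, so by upper semicontinuity of density $F_{0,1}(\mathcal C_j)=|\gamma_j|/(2\pi)\ge 1$; combining this with $F_{0,1}(\mathcal C)=\lim_{\rho\to\infty}F_{0,\rho^2}(M)\le\lambda(M)$ gives the bound $\#\{\text{conical ends}\}\le\lambda(M)$. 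So this part is correct in outcome but needs a different justification.

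The cylindrical case is where the proposal actually breaks down. First, the scale $|y_k-y_\infty|^{-1/2}$ is too small: the lateral displacement between the rays $\mathbb R_{y_k}$ and $\mathbb R_{y_\infty}$ at distance $\rho$ from the origin is of order $\rho\,|y_k-y_\infty|$, so to make it comparable to the cylinder radius you need $\rho\sim|y_k-y_\infty|^{-1}$, not $|y_k-y_\infty|^{-1/2}$; at your scale the two model cylinders are still essentially coincident and no contradiction appears. Second, even with the scale corrected, the claim that the resulting pair of cylinder-like sheets forces a new topologically independent cycle in $M$, so that infinitely many such pairs contradict finite genus, is not substantiated and is not clearly true: two disjoint annular sheets of an embedded surface running nearly parallel do not by themselves create genus. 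The paper avoids this entirely. It uses only the entropy bound at this step: smooth convergence of $M-\rho y_\infty$ to the round cylinder forces $M$ to avoid the annular tube of radii in $(2,R)$ about $I_{y_\infty}(L_0,\infty)$; moving out along $\mathbb R_{y_k}$, the distance to $\mathbb R_{y_\infty}$ eventually equals $R/2$, at which point a ball of radius $R/4$ about $L_1 y_k$ misses $M$, so $F_{L_1 y_k,1}(M)\le C/R$; but since $y_k$ is a cylindrical direction, monotonicity gives $F_{L_1 y_k,1}(M)\ge\lambda_1>1$, a contradiction for $R$ large. The finite-genus hypothesis enters only through Theorem~\ref{thm:tangent flow multiplicity 1}, not through a direct cycle count. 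I would recommend replacing your final step with this entropy argument (or something equivalent) rather than trying to make the genus-counting heuristic precise.
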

\begin{proof}
Assume on the contrary that $M$ has finite entropy and finite genus, but infinitely many ends. 

\vspace{0.5em}
{\noindent\em Case 1: There are infinitely many cylindrical ends.}

Suppose that $M$ has cylindrical ends with direction $y_i$ and $|y_i|=1$ . Assume that $y_i\rightarrow y$. Then 
\[F_{\rho y,1}(M)=\lim_{i\rightarrow\infty }F_{\rho y_i,1}(M)\geq \lim_{i\rightarrow\infty}\lim_{r\rightarrow\infty}F_{ry_i,1}(M)=\lambda_1.\] 

By \cite{Wang16}, $M-\rho y$ locally smoothly converges to $\mathbb R_y\times S^1(\sqrt 2)$ with multiplicity $1$. Thus for any $R>2$, there exists $L_0>0$ such that 
\begin{equation}\label{eq:empty cylinder}
M\cap \{x\in\R^3:\langle x, y\rangle \geq L_0, \dist(x,I_y(L_0,+\infty))\in(2,R)\}=\emptyset.
\end{equation}
Take $i$ large enough such that $L_0|y_i-y|<1$. Consider a smooth function $f\in C^\infty(0,+\infty)$ given by 
\[f(\rho)=\rho^2-\langle \rho y_i,y\rangle^2.\]

Note that $f(L_0)<1$ and $\lim_{\rho\rightarrow\infty}f(\rho)=+\infty$. Hence there exists $L_1>L_0$ such that $f(L_1)=R^2/4$. Together with (\ref{eq:empty cylinder}), we have $(M-L_1y_i)\cap B_{R/4}(0)=\emptyset$. This deduces that $F_{L_1y_i,1}(M)\leq C/R$, which contradicts 
\[ F_{L_1y_i,1}(M)\geq \lim_{\rho\rightarrow\infty}F_{\rho y_i,1}(M)=\lambda_1.\]

\vspace{0.5em}
{\noindent\em Case 2: There are infinitely many conical ends.}

Suppose that $M$ admits infinitely many conical ends with links $\{\gamma_i\}$. Take $y_i\in \gamma_i$ and let $y=\lim_{i\to\infty} y_i$. Then
\[F_{\rho y,1}(M)=\lim_{i\rightarrow\infty }F_{\rho y_i,1}(M)\geq \lim_{i\rightarrow\infty}\lim_{r\rightarrow\infty}F_{ry_i,1}(M)=1.\] 

By \cite{Wang16}, $\Sigma-\rho y$ locally smoothly converges to $\mathbb R_y\times S^1(\sqrt 2)$ or $\mathbb R_y\times \mathbb R$. By the similar argument as in Case 1, the limit surface can not be a cylinder. Then it remains to rule out the case that $M-\rho y$ locally smoothly converges to $\mathbb R_y\times \mathbb R$. Without loss of generality, we assume that $\dist(y,\gamma_i)=\dist(y,y_i)$, where $\dist(\cdot,\cdot)$ is the distance function in $\mathbb R^3$.

Take $R\gg 1$. Then applying local regularity theorem \cite{Whi02}*{Theorem 3.1}, we have that:
\begin{lemma}\label{lem:app:as graphs]}
There exists $\epsilon>0$ so that, for any $x\in S^2(1)$, $\alpha>0$ with
\begin{equation}\label{eq:app:condition}
\text{ $\lim_{\rho\rightarrow\infty}F_{\rho x,1}(M)=1$ and $1\leq F_{\alpha x,1}(M)<1+\epsilon$,}
\end{equation}
then $M\cap B_{3R}(\alpha x)$ can be written as a graph, with $C^2$ norm $\ll 1$, over a hyperplane $P$ containing $\mathbb R_x$. Particularly, 
\[d_H(P\cap B_{3R}(\alpha x),M\cap B_{3R}(\alpha x))\ll 1,\]
where $d_H$ is the Hausdorff distance.
\end{lemma}
\begin{proof}[Proof of Lemma \ref{lem:app:as graphs]}]
The proof here is similar to Step 1 and 2 in Proposition \ref{prop:cylinder end}, and we leave it to readers. 
\end{proof}

\begin{claim}\label{claim:app:rho1 exists}
There exists $\rho_1>0$ so that $F_{x,1}(M)<1+\epsilon$ for all $x\in B_R(\rho y)$ and $\rho>\rho_1$.
\end{claim}
\begin{proof}[Proof of Claim \ref{claim:app:rho1 exists}]
Since $\lim_{\rho\rightarrow\infty}F_{\rho y,1}(M)=1$, then we can take $\rho_0$ and $r_0$ so that $F_{x,1}(M)<1+\epsilon$ for $x\in B_{r_0}(\rho_0y)$. Let $\rho_1=\rho_0\cdot R/r_0$. Then monotonicity of $F_{\rho x,1}(M)$ as a function of $\rho$ suggests that such $\rho_1$ is the desired constant.
\end{proof}

Now take $k$ large enough so that $\rho_1\dist(y_k,y)<1$. Set $\rho_2=10/\dist(y_k,y)$. It follows that $\dist(\rho_2y,\mathcal C_{\gamma_k})>9$. Note that $x=y$ and $\alpha=\rho_2$ satisfy (\ref{eq:app:condition}). By Lemma \ref{lem:app:as graphs]}, there exists a hyperplane $P$ containing $\mathbb R_y$ so that $M\cap B_{3R}(\rho_2y)$ can be written as a graph over $P$ with
\[d_H(P\cap B_{3R}(\rho_2y),M\cap B_{3R}(\rho_2y))\ll1.\]
We now prove that:
\begin{claim}\label{claim:gammak closes to P}
$\mc C_{\gamma_k}$ closes to $P$ in $B_{R}(\rho_2y)$.
\end{claim}
\begin{proof}[Proof of Claim \ref{claim:gammak closes to P}]
Since $\rho_2y_k\in B_R(\rho_2y)$, then $\mathcal C_{\gamma_k}\cap B_R(\rho_2y)\neq \emptyset$. By Claim \ref{claim:app:rho1 exists}, for all $z\in \mathcal C_{\gamma_k}\cap B_R(\rho_2y)$, we have that $x=z/|z|$ and $\alpha=|z|$ satisfy (\ref{eq:app:condition}). By Lemma \ref{lem:app:as graphs]}, $\dist (z,M\cap B_{3R}(z))\ll1$ for all $z\in\mathcal C_{\gamma_k}\cap B_R(\rho_2y)$. By triangle inequalities,
\begin{align*}
\dist(z,P)&\leq \dist(z,M\cap B_R(z))+d_H(M\cap B_R(z),P\cap B_R(z))\\
&\leq\dist(z,M\cap B_R(z))+d_H(M\cap B_{3R}(\rho_2y),P\cap B_{3R}(\rho_2y))\ll 1.
\end{align*}
\end{proof}

\begin{figure}[h]
\begin{center}
\def\svgwidth{0.6\columnwidth}
  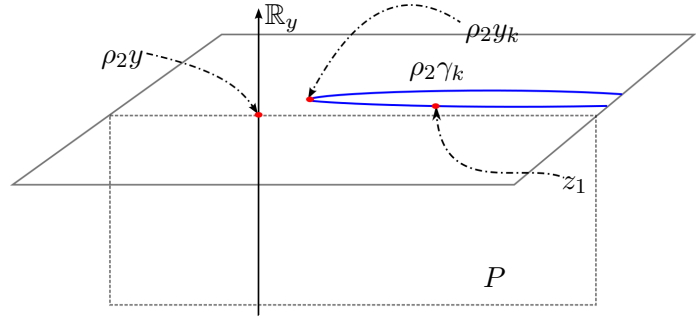
  \caption{$\rho_2\gamma_k$ closes to a line.}
  \label{fig:OverP}
\end{center}
\end{figure}

There are two cases, depending on whether $\rho_2\gamma_k$ is completely contained in the ball $B_R(\rho_2y)$. Let us first consider the case that $\rho_2\gamma_k$ is not completely contained in $B_R(\rho_2y)$. Let $\gamma'$ be the connected component of $\rho_2\gamma_k\cap B_R(\rho_2y)$ containing $\rho_2y_k$. Then by Claim \ref{claim:gammak closes to P} and the fact that $\rho_2y_k$ is the closest point of $\gamma'$ to $\rho_2y$, $\gamma'$ must be a curve close to half of $P\setminus \mb R_y$; see Figure \ref{fig:OverP}. Denote by $\gamma'_1$ and $\gamma_2'$ the two connected components of $\gamma'\setminus\{ \rho_2y_k\}$. Taking $z_1\in\rho_2\gamma_k$ so that $|z_1-\rho_2y_k|=R/2$, then we have 
\begin{align*}
F_{0,1}(\mc C_{\gamma_k}-z_1)&\geq F_{0,1}( B_{R/4}(z_1)\cap\mc C_{\gamma_1'}-z_1)+F_{0,1}( B_{R/4}(z_1)\cap\mc C_{\gamma_2'}-z_1)\\
&\geq 1-C/R+1-C/R>3/2.
\end{align*}
Therefore,
\[\lim_{\rho\rightarrow\infty}F_{\rho z_1/\rho_2,1+\rho^2/\rho^2_2}(M)=F_{0,1}(\lim_{\rho\rightarrow\infty}\rho^{-1}M-z_1)>3/2,\]
which implies that
\[
F_{z_1,2}(M)=F_{\rho_2 z_1/\rho_2,1+\rho_2^2/\rho^2_2}(M)\geq \lim_{\rho\rightarrow\infty}F_{\rho z_1/\rho_2,1+\rho^2/\rho^2_2}(M)
>3/2.
\]
This gives a contradiction.

To complete the proof, it suffices to consider the case that $\rho_2\gamma_k$ is contained in $B_R(\rho_2y)$. Let $x_1,x_2\in\rho_2\gamma_k$ so that $|x_1-x_2|=\mathrm{diam}(\rho_2\gamma_k)$ (see Figure \ref{fig:OverP1}). Set $\rho_3=2R/|x_1-x_2|$. Then $x=x_1/|x_1|$ and $\alpha=\rho_3|x_1|$ satisfy (\ref{eq:app:condition}). Hence by Lemma \ref{lem:app:as graphs]}, there exists a hyperplane $P_1$ containing $\mathbb R_{x_1}$ so that $M\cap B_{3R}(\rho_3x_1)$ can be written as a graph over $P_1$ with 
\[d_H(P_1\cap B_{3R}(\rho_3x_1),M\cap B_{3R}(\rho_3x_1))\ll1.\]
\begin{figure}[h]
\begin{center}
\def\svgwidth{0.6\columnwidth}
  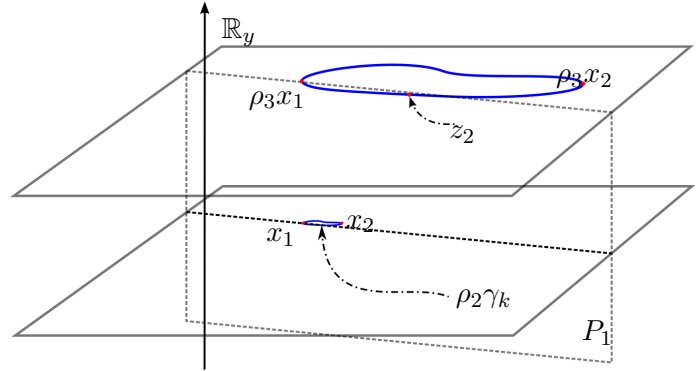
  \caption{$\rho_3\gamma_k$ closes to a line.}
  \label{fig:OverP1}
\end{center}
\end{figure}
\begin{claim}
$\mc C_{\gamma_k}$ closes to $P_1$ in $B_R(\rho_3x_1)$.
\end{claim}
\begin{proof}[Proof of Claim \ref{claim:gammak closes to P}]
Since $\rho_2y_k\in B_R(\rho_2y)$, then by Claim \ref{claim:app:rho1 exists}, 
$F_{z,1}(M)<1+\epsilon$ for all $z\in \rho_2\gamma_k$. By monotonicity formulas, for any $z\in \rho_3\rho_2\gamma_k$, $x=z/|z|$ and $\alpha=|z|$ satisfy \eqref{eq:app:condition}, i.e.
\[1\leq F_{z,1}(M)<1+\epsilon.\]
Then Lemma \ref{lem:app:as graphs]} gives that $\dist (z,M\cap B_{3R}(z))\ll1$ for all $z\in\rho_3\rho_2\gamma_k$. By triangle inequalities,
\begin{align*}
\dist(z,P_1)&\leq \dist(z,M\cap B_R(z))+d_H(M\cap B_R(z),P_1\cap B_R(z))\\
&\leq\dist(z,M\cap B_R(z))+d_H(M\cap B_{3R}(\rho_3x_1),P_1\cap B_{3R}(\rho_3x_1))\ll 1.
\end{align*}
\end{proof}

To proceed the argument, we take $z_2\in \rho_3\rho_2\gamma_k$ so that $|z_2-\rho_3x_1|=R$. Denote by $\gamma''_1$ and $\gamma_2''$ the two components of $\rho_2\rho_3\gamma_k\setminus\{\rho_3x_1,\rho_3x_2\}$. Then we have 
\begin{align*}
F_{0,1}(\mc C_{\gamma_k}-z_2)&\geq F_{0,1}( B_{R}(z_2)\cap\mc C_{\gamma_1''}-z_2)+F_{0,1}( B_{R}(z_2)\cap\mc C_{\gamma_2''}-z_2)\\
&\geq 1-C/R+1-C/R>3/2.
\end{align*}
Therefore,
\[\lim_{\rho\rightarrow\infty}F_{\rho z_2/\rho_2\rho_3,1+\rho^2/\rho^2_2\rho_3^2}(M)=F_{0,1}(\mc C_{\gamma_k}-z_2)>3/2,\]
which implies that
\[
F_{z_2,2}(M)=F_{\rho_2\rho_3 z_2/\rho_2\rho_3,1+\rho_2^2\rho_3^2/\rho^2_2\rho_3^2} (M)\geq \lim_{\rho\rightarrow\infty}F_{\rho z_2/\rho_2\rho_3,1+\rho^2/\rho^2_2\rho_3^2}(M)
>3/2.
\]
This gives a contradiction.
\end{proof}

The following linear bound of $|A|$ is proven by Song \cite{Song14}. Here we give a uniform bound for $M\in \mathcal M_\mathcal S(\Lambda,g)$, which is used in Theorem \ref{Theorem: blow up shrinker to get translator}.
\begin{lemma}[cf. \citelist{\cite{Wang16}*{Appendix A.1}\cite{Song14}*{Theorem 19}}]\label{lem:A linear bounded}
There exists a constant $C=C(\Lambda, g)$ such that for any self-shrinker $M\in\mathcal M_\mathcal S(\Lambda ,g)$, 
\[|A|(x)\leq C(|x|+1).\]
\end{lemma}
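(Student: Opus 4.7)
The plan is to argue by contradiction using a blow-up construction. Suppose the claimed bound fails; then there exist sequences $M_i\in\mathcal M_\mathcal S(\Lambda,g)$ and $p_i\in M_i$ with $|A|_{M_i}(p_i)/(|p_i|+1)\to\infty$. Colding-Minicozzi's smooth compactness \cite{CM12_2} for $\bigcup_{k\leq g}\mathcal M_\mathcal S(\Lambda,k)$ provides, along a subsequence, a limit $M_i\to M$ in $C^\infty_{loc}(\R^3)$ with $M\in\bigcup_{k\le g}\mathcal M_\mathcal S(\Lambda,k)$. If $|p_i|$ stayed bounded, smooth convergence would force $|A|_{M_i}(p_i)$ to be bounded, a contradiction; so we may assume $|p_i|\to\infty$.

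Next I would perform a Choi-Schoen-type point-picking by maximizing a weighted curvature such as $q\mapsto |A|_{M_i}(q)\,\max\{1-2|q-p_i|/|p_i|,0\}$ on $M_i$, producing $q_i\in M_i$ with $|q_i|\to\infty$ such that, setting $\lambda_i:=|A|_{M_i}(q_i)$, one has $\lambda_i/(|q_i|+1)\to\infty$ and $|A|_{M_i}\le 2\lambda_i$ on an extrinsic ball $B_{s_i}(q_i)\cap M_i$ with $\lambda_i s_i\to\infty$. Define the blow-up $\widetilde M_i:=\lambda_i(M_i-q_i)$. Then $|A|_{\widetilde M_i}(0)=1$, $|A|_{\widetilde M_i}\le 2$ on a Euclidean ball of radius $\lambda_i s_i\to\infty$, and a direct rescaling of the shrinker equation yields
\begin{equation*}
\widetilde H(\tilde x) \;=\; \frac{1}{2\lambda_i^2}\langle \tilde x,\widetilde{\mathbf n}\rangle + \frac{1}{2\lambda_i}\langle q_i,\widetilde{\mathbf n}\rangle.
\end{equation*}
Since $\lambda_i\to\infty$ and $|q_i|/\lambda_i\to 0$, both terms vanish uniformly on compact sets, so a subsequence of $\widetilde M_i$ converges in $C^\infty_{loc}$ to a complete embedded minimal surface $\widetilde M_\infty\subset\R^3$ with $|A|_{\widetilde M_\infty}(0)=1$.

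The crucial step is to bound the entropy of $\widetilde M_\infty$. Setting $y_i:=q_i/|q_i|$, pass to a further subsequence so that $y_i\to y\in S^2(1)$. Applying Proposition \ref{prop:entropy upper bound} with $\rho_i=|q_i|$ and $a_i=\lambda_i$ (both $\rho_i\to\infty$ and $\rho_i a_i\to\infty$ hold), one obtains
\begin{equation*}
F_{x,t}(\widetilde M_\infty)\le \lim_{\rho\to\infty}F_{\rho y,1}(M)\quad\text{for all }(x,t)\in\R^3\times(0,\infty).
\end{equation*}
Theorem \ref{thm:tangent flow multiplicity 1} forces $\lim_{\rho\to\infty}F_{\rho y,1}(M)\in\{0,1,\lambda_1\}$ according to whether $y$ misses the asymptotic link of $M$, lies on a conical direction, or lies on a cylindrical direction. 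Hence $\lambda(\widetilde M_\infty)\le\lambda_1<2$.

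To close the argument, I would invoke classical minimal surface theory: a complete properly embedded minimal surface in $\R^3$ with entropy strictly less than $2$ and finite genus must be a plane. Indeed, the entropy bound gives bounded area ratios, which together with the finite genus of the limit (inherited from the embedded $\widetilde M_i$) yields finite total curvature via the structure theorems of Meeks-Rosenberg; any non-planar complete embedded minimal surface with finite total curvature in $\R^3$ has at least two ends by Schoen's theorem, and thus density at infinity at least $2$, contradicting the entropy bound. This contradicts $|A|_{\widetilde M_\infty}(0)=1$, completing the proof. The main technical obstacle is the point-picking step: one must arrange curvature control on a ball whose Euclidean radius, after rescaling by $\lambda_i$, tends to infinity, while simultaneously ensuring $|q_i|/\lambda_i\to 0$ so that the rescaled shrinker equation degenerates to the minimal surface equation rather than to a translator.
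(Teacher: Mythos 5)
Your proposal is correct and follows essentially the same strategy as the paper's proof: contradiction, Choi--Schoen point-picking to produce a complete minimal blow-up limit with $|A|(0)=1$, the entropy bound via Proposition~\ref{prop:entropy upper bound}, and the rigidity of low-entropy complete embedded minimal surfaces. The only cosmetic differences are the exact weight used in the point-picking (the paper maximizes over the small ball $B_{1/|x_i|}(x_i)$ rather than $B_{|p_i|/2}(p_i)$, which streamlines the verification that the rescaled mean curvature vanishes) and the level of detail you supply for the final ``entropy $<2$ forces a plane'' step, which the paper leaves terse.
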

\begin{proof}[Proof of Lemma \ref{lem:A linear bounded}]
Assume on the contrary that there exist a sequence of self-shrinkers $\{\Sigma_i\}$ with entropy bound $\Lambda$ and genus bound $g$, and $x_k\in \Sigma_k$ such that 
\[|A|(x_k)\geq k(|x_k|+1).\] 
By Colding-Minicozzi \cite{CM12_2}, $\Sigma_k$ locally smoothly converges to a self-shrinker, denoted by $\Sigma$, with $\lambda(\Sigma)\leq \Lambda$ and $g(\Sigma)\leq g$. Without loss of generality, $\{x_k\}$ is assumed to be unbounded. Take $y_i\in B_{1/|x_i|}(x_i)$ such that
\[(|x_i|^{-1}-|y_i-x_i|)|A(y_i)|=\sup_{y\in B_{1/|x_i|}(x_i)}(|x_i|^{-1}-|y-x_i|)|A(y)|.\]
Set $r_i=(|x_i|^{-1}-|y_i-x_i|)/2$. Then $|A(y_k)|r_k\geq k/2$ and for $y\in B_{r_k}(y_k)\cap \Sigma_k$,
\[2r_k|A(y_k)|\geq (|x_k|^{-1}-|y-x_k|)|A(y)|\geq r_k|A(y)|.\]
Thus for $y\in B_{k/2}(0)\cap |A(y_k)|(\Sigma_k-y_k)$, $|A(y)|\leq 2$ and $|H(y)|\leq 1/k$. Let $k\rightarrow\infty$, $|A(y_k)|(\Sigma_k-y_k)$ locally smoothly converges to a minimal surface $\Gamma$. 

Let $y$ be a limit point of $y_k/|y_k|$. Then by Proposition \ref{prop:entropy upper bound}, for any $(z,t)\in\mathbb R^3\times (0,+\infty)$,
\[
F_{z,t^2}(\Gamma)\leq \lim_{\rho\rightarrow\infty}F_{\rho y,1} (\Sigma)\leq \lambda_1,           
\]
which implies that $\Gamma$ has only one end. Thus $\Sigma$ is a plane, which contradicts $|A_{\Gamma}(0)|=1$.
\end{proof}

\section{Translating solitons with low entropy}\label{section:Translating solitons with low entropy}
For completeness of this paper, we sketch the proof of classification of translating solitons with low entropy in $\mathbb R^3$. The argument is based on the proof by Choi-Haslhofer-Hershkovits \cite{CHH18}*{Theorem 1.2} and Hershkovits \cite{Her18}*{Theorem 3}. We refer to \cite{Gua16} the computation of the entropy of translating solitons.

\begin{theorem}\label{thm:translator low entropy}
Let $M\subseteq \mathbb R^3$ be a translating soliton with entropy less than $2$. Then $M$ is a plane or a Bowl soliton.
\end{theorem}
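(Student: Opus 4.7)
The plan is to view the translator $M$ as an eternal mean curvature flow with entropy strictly below $2$, and then apply the ancient low-entropy classification of \cite{CHH18, Her18}. First, I would normalize $v$ so that $M$ satisfies $H = \langle v, \mathbf n\rangle$ for a fixed unit vector $v$ and consider the family $M_t := M - tv$, $t \in \mathbb R$. Each $M_t$ evolves by mean curvature, so $\{M_t\}_{t \in \mathbb R}$ is an eternal mean curvature flow, and since entropy is translation-invariant, $\lambda(M_t) \equiv \lambda(M) < 2$. In particular every Gaussian density of the flow is strictly less than $2$, so by Brakke's regularity theorem (Theorem \ref{thm:Brakke regularity}) every tangent flow and every limit flow of $\{M_t\}$ is smooth with multiplicity one.

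Next I would analyze the parabolic blow-down at $-\infty$. For any sequence $\lambda_k \to \infty$, the rescaled flow $\lambda_k^{-1} M_{\lambda_k^2 t}$ converges subsequentially, by Lemma \ref{lem:brakke compactness} and the uniform entropy control, to a Brakke flow $\{\nu_t\}_{t < 0}$. Huisken's monotonicity formula forces this limit to be self-similarly shrinking, and its entropy is at most $\lambda(M) < 2$. Combined with the multiplicity-one property and the Bernstein--Wang low-entropy classification of self-shrinkers in $\mathbb R^3$, the slice $\nu_{-1}$ must be a hyperplane, a round sphere, or a round cylinder $S^1(\sqrt 2) \times \mathbb R_w$ for some unit vector $w$. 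The sphere is immediately ruled out since its shrinking flow becomes extinct in finite backward time, contradicting that $\{M_t\}$ is eternal. If the backward blow-down is a hyperplane, then Brakke's smooth convergence forces $M_t$, and hence $M$, to be a hyperplane.

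This leaves the cylindrical case, which is the main obstacle and the heart of \cite{CHH18, Her18}. The hard step is to upgrade the entropy bound together with the existence of a single cylindrical backward blow-down into global mean convexity and Andrews non-collapsing of the entire eternal flow, excluding subtle type-II neck-pinch-like behavior. Once this is in place, Brendle--Choi's uniqueness theorem for ancient non-collapsed flows asymptotic to a round cylinder identifies $\{M_t\}$ with the translating Bowl soliton, so $M$ is a Bowl soliton. I would carry out this bootstrap following the argument of \cite{CHH18} in the ancient-flow setting, specialized to translators as in \cite{Her18}.
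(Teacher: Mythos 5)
Your high-level strategy coincides with the paper's: view $M$ as an eternal mean curvature flow, extract a self-similarly shrinking backward blow-down, classify the blow-down, and then upgrade the cylindrical case to the Bowl conclusion. However, there is a genuine gap in the classification step, and the cylinder-to-Bowl step follows a different route than the paper.

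The gap is in the line ``Combined with the multiplicity-one property and the Bernstein--Wang low-entropy classification of self-shrinkers in $\mathbb R^3$, the slice $\nu_{-1}$ must be a hyperplane, a round sphere, or a round cylinder.'' The Bernstein--Wang result \cite{BW17} only operates below the threshold $\lambda_1=\lambda(S^1(\sqrt 2)\times\mathbb R)\approx 1.52$: it says the plane, sphere and cylinder are precisely the self-shrinkers in $\mathbb R^3$ with entropy at most $\lambda_1$. Your hypothesis is merely $\lambda(M)<2$, and there do exist self-shrinkers with entropy strictly between $\lambda_1$ and $2$ (e.g. the Angenent torus), so the trichotomy you claim does not follow from entropy alone. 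The paper supplies the missing ingredient: translating the blow-down $\nu_{-1}$ along the soliton direction $y$ shows $\nu_{-1}-\tau y$ remains $F$-stationary for every $\tau$, whence $y^\perp=0$ and $\nu_{-1}$ splits off the line $\mathbb R_y$. Thus $\nu_{-1}=\Gamma\times\mathbb R_y$ for a one-dimensional $F$-stationary cycle $\Gamma$ of entropy $<2$, which by Abresch--Langer must be a line or a round circle. This also disposes of the sphere cleanly; your ``extinct in finite backward time'' argument is not quite right as stated, since the shrinking sphere exists for all $t<0$ and becomes extinct only at $t=0$, which is the final time of the blow-down anyway.

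On the cylinder-to-Bowl step, your route — upgrade the entropy bound and cylindrical blow-down to global mean convexity and noncollapsing via the canonical-neighborhood machinery of \cite{CHH18}, then apply Brendle--Choi uniqueness — is a legitimate but much heavier argument. The paper instead invokes Hershkovits \cite{Her18}: once the blow-down is the cylinder, the Neck Improvement Theorem \cite{BC17} gives precise graphical asymptotics for the end, \cite{MHS15}*{Theorem A} then gives rigidity modulo a connectedness-at-infinity hypothesis, and the paper proves explicitly that $M$ has a single end via a uniform curvature bound together with the monotone translator $F$-functional. Both paths are valid for the cylindrical case; but you need to repair the trichotomy step (add the splitting-off-a-line observation) before either applies under the stated hypothesis $\lambda(M)<2$.
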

\begin{proof}
Let $y$ be the translating vector field of $M$, i.e.
\[H=-\langle y,\n\rangle.\]
For each $r>0$, define the blow-down Brakke flow by
\[\mu_t^r=\mathcal H^2\lfloor\Big(\frac{M-ry}{\sqrt{1+r}}+\sqrt{1+r}(1+t)y\Big).\]
By the compactness of Brakke flows (Lemma \ref{lem:brakke compactness}), as $r\rightarrow+\infty$, there exist a sequence $r_i\rightarrow\infty$ and a Brakke flow $\{\nu_t\}$ such that $\mu_t^{r_i}\rightharpoonup\nu_t$ in the sense of Radon measures for all $t<0$. 

We are going to show that $\nu_{-1}$ is $F$-stationary. Indeed, by computing directly,
\[\mu_t^r=\mathcal H^2\lfloor\Big(\sqrt{-t}\big(\frac{M-\widetilde ry}{\sqrt{1+\widetilde r}}\big)\Big),\]
where $\widetilde r=r-(1+t)(1+r)$. Note that for $t<0$ fixed, $\widetilde r\rightarrow+\infty$ as $r\rightarrow+\infty$.

Note that $M$ is a translating soliton with direction $y$. Then by the monotonicity of $F$-functional, $F_{ry,1+r}(M)$ is monotonically increasing (see \cite{CM12_1}*{(1.9)}). Denote by $\lambda$ the limiting value. Then $\lambda<2$ by the assumption of entropy bound.

Now computing the $F$-functional for the limiting flow gives
\[
\int\Phi(x,t)d\nu_t=\lim_{r\rightarrow\infty}\int\Phi(x,t)d\mu_t^r=\lim_{r\rightarrow+\infty}F_{\widetilde ry,1+\widetilde r}(\Sigma)=\lambda.\]

By the monotonicity formulas (see Lemma \ref{Brakke monotonicity}), $\nu_{-1}$ is $F$-stationary.

Now we can show that $\nu_{-1}$ is supporting on a smooth self-shrinker. Namely, by the same argument, for all $\tau\in\mathbb R$, the related varifold of the limit of $\mu_{-1}^{r_i+\tau\sqrt{1+r_i}}$ is also $F$-stationary. Recall that 
\[\mu_{-1}^{r_i+\tau\sqrt{1+r_i}}=\mathcal H^2\lfloor\sqrt{s_i}(\frac{M-r_iy}{\sqrt{1+r_i}}-\tau y),\]
where $s_i=\frac{\sqrt{1+r_i}}{\sqrt{1+r_i}+\tau}$. Note that $s_i\rightarrow 1$ as $r_i\rightarrow+\infty$. We conclude that $\nu_{-1}-\tau y$ is also stationary. This deduces that $y^\perp=0$. It follows that $\nu_{-1}$ splits off a line. 

Now $\nu_{-1}$ is a $F$-stationary cycle with entropy less than $2$. It follows that $\nu_{-1}$ is supporting on a plane or a self-shrinker cylinder.

In the first case, $M$ is a plane by Brakke local regularity or the argument in \cite{BW18}*{Proposition 3.2}. In the second case, $M$ is a Bowl soliton by Hershkovits \cite{Her18}*{Theorem 3}. 

For completeness of this paper, we sketch the proof given by Hershkovits \cite{Her18}. Firstly, using the Neck Improvement Theorem \cite{BC17}*{Theorem 4.4}, the end can be written as a graph (see \cite{Her18}*{\S 2} for details) of a function $g$ over $\mathbb R^2\setminus B_R(0)$ satisfying 
\[g=\frac{1}{2}(x_1^2+x_2^2)-\frac{1}{2}\log(x_1^2+x_2^2)+O(\frac{1}{\sqrt{x_1^2+x_2^2}}).\] 
Then the claim follows from \cite{MHS15}*{Theorem A} once we prove $M$ has only one end. 

We now show that $M$ admits only one end. We first claim that $|A|$ is bounded by Choi-Schoen's estimates \cite{CS85}.
\begin{claim}\label{claim:|A| bound}
There exists $C=C(M)$ such that $|A|(x)\leq C$ for any $x\in M$, where $A$ is the second fundamental form of $M$.
\end{claim}
\begin{proof}[Proof of Claim \ref{claim:|A| bound}]
Assume on the contrary that $x_i\in M$ satisfying $|A|(x_i)>i$. Take $q_i\in M$ such that 
\[\max_{x\in B_1(p)\cap M}(1-|x-x_i|)|A|(x)=(1-|z_i-x_i|)|A|(z_i),\]
where $B_r(p)$ is the ball in $\mathbb R^3$.

Set $r_i=(1-|z_i-x_i|)/2$. Then $r_i|A|(z_i)\rightarrow\infty$ and for any $x\in B_{r_i}(z_i)$, we have $|A|(x)\leq 2|A|(z_i)$. Hence $|A|(z_i)(M-z_i)$ locally smoothly converges to a hypersurface $\Gamma\subseteq \mathbb R^3$. Recall $M$ is a translating soliton and then $|H|\leq 1$. It follows that $\Gamma$ is a minimal surface. By Proposition \ref{prop:entropy upper bound}, the entropy of $\Gamma$ is less than $2$, which implies that $\Gamma$ is a plane. This contradicts the fact of $|A_\Gamma|(0)=1$. 

Thus we have proved Claim \ref{claim:|A| bound}.
\end{proof}

As a result of claim \ref{claim:|A| bound}, there exists a constant $\delta>0$ such that $F_{x,1}(M)\geq \delta$ for any $x\in M$.

To proceed our argument, assume on the contrary that $p_i=(x_{1,i},x_{2,i},y_i)\rightarrow\infty$ lies in $M$ but not in the end given by the graph of $g$. Recall that $\frac{M-ry}{\sqrt{1+r}}$ locally smoothly converges to $S^1(\sqrt 2)\times \mathbb R_y$, which yields that for any $k>0$ and large $i$, $y_i<\frac{1}{2k}(x_{1,i}^2+x_{2,i}^2)$. 
\begin{claim}\label{claim:parabolic neighborhood}
There exists $K>0$ such that $Ky_i\leq -(x_{1.i}^2+x_{2,i}^2)$ for all large $i$.
\end{claim}
\begin{proof}[Proof of Claim \ref{claim:parabolic neighborhood}]
Assume on the contrary that for any fixed $k>0$, we have $ky_i\ge -(x_{1,i}^2+x_{2,i}^2)$ for large $i$. Take $s_{i,k}>0$ such that $p_i+s_{i,k}y\in \{ky=x_{1}^2+x_{2}^2\}$ and set $r_i=(x_{1,i}^2+x_{2,i}^2)/k$ (see Figure \ref{fig:One_end}). 

\begin{figure}[h]
\begin{center}
\def\svgwidth{0.6\columnwidth}
  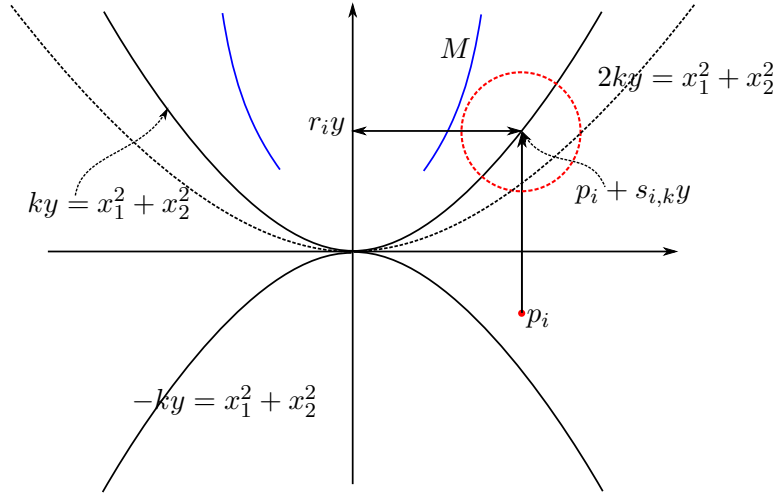
  \caption{$p_i\rightarrow-\infty$.}
  \label{fig:One_end}
\end{center}
\end{figure}

Because $\frac{M-r_iy}{\sqrt{1+r_i}}$ locally smoothly converges to $S^1(\sqrt 2)\times \mb R_y$, we have $B_{\sqrt{x_{1,i}^2+x_{2,i}^2}/2}(p_i+s_{i,k}y)\cap M=\emptyset$. Note that $y_i<\frac{1}{2k}(x_{1,i}^2+x_{2,i}^2)$. Clearly,
\[x_{1,i}^2+x_{2,i}^2\leq 2ks_{i,k}\leq 4(x_{1,i}^2+x_{2,i}^2).\]
Therefore,
\[B_{\sqrt k/4}(0)\cap \frac{M-p_i-s_{i,k}y}{\sqrt{1+s_{i,k}}}=\emptyset,\]
which implies that 
 \[ \lim_{k\rightarrow\infty}\lim_{i\rightarrow\infty}F_{s_{i,k}y,1+s_{i,k}}(M-p_i)=0. \]
 
Note that $M-p_i$ is also a translating soliton with direction $y$. Hence $F_{sy,1+s}(M-p_i)$ is monotonically increasing (see \cite{CM12_1}*{(1.9)}). This deduces that 
\[\delta\leq \liminf_{i\rightarrow\infty}F_{0,1}(M-p_i)\leq \lim_{k\rightarrow\infty}\lim_{i\rightarrow\infty}F_{s_{i,k}y,1+s_{i,k}}(M-p_i)=0,\]
which leads to a contradiction.
\end{proof}

Now set $p_i^0=(x_{1,i},x_{2,i},0)$. By Claim \ref{claim:parabolic neighborhood}, $Ky_i<-|p_i^0|^2$. In particular, $|y_i|\rightarrow\infty$ and $p_i^0/\sqrt {1+|y_i|}$ is bounded. Note that $\frac{M}{\sqrt{1+r}}=\mu_0^r$ vanishes as $r\rightarrow\infty$. Hence \[\lim_{i\rightarrow\infty}F_{0,1+|y_i|}(M-p_i^0)=0,\]
;see also Claim \ref{claim:parabolic neighborhood}). On the other hand,
\[F_{0,1}(M-p_i)\leq F_{|y_i|y,1+|y_i|}(M-p_i)=F_{0,1+|y_i|}(M-p_i^0),\]
which leads to a contradiction.

Then we finish the proof.
\end{proof}

\bibliographystyle{amsalpha}
\bibliography{self-shrinker}
\end{document}